\newcommand{\nc}{\newcommand}
\def\softd{{\leavevmode\setbox1=\hbox{d}%
          \hbox to 1.05\wd1{d\kern-0.4ex{\char039}\hss}}}%cstocs
\newcommand{\range}[2]{\{#1, \ldots, #2 \}}
\nc{\pd}{ \partial }
\newcommand\TS{\Delta t}
\newcommand\dt{\mathrm{dt}}
\newcommand\dx{\,\mathrm{d}x}
\newcommand\dS{\,\mathrm{dS}(x)}
\newcommand\Up{\mathrm{Up}}
\newcommand\Fup{F_h^\eps}
\newcommand\bFup{{\bf F}_h^\eps}
\newcommand{\bUp}{\textbf{Up}}
\newcommand{\ith}{ i^{\text{th}}}
\newcommand{\tor}{\mathbb{T}^d}
\newcommand{\bfPhi}{\mathbf{\Phi}}
\nc{\pdedge}{ \eth _{\cal D} }
\nc{\pdedgei}{ \eth _{{\cal D}_i} }
\nc{\pdedgej}{ \eth _{{\cal D}_j} }
\newcommand{\pdduali}{\pdedgei}
\newcommand{\pddualj}{\pdedgej}
\newcommand{\pdmesh}[1]{ \pd _\mesh^{(#1)} }
\newcommand{\pdmeshi}{ \pdmesh{i} }
\newcommand{\pdBii}{ \eth_{\Bii} }
\newcommand{\pdBij}{ \eth_{\Bij} }
\newcommand{\pdBji}{ \eth_{\Bji} }
\newcommand{\bbE}{ \mathbb{E}}
\newcommand{\frakE}{ \mathfrak{E}}
\newcommand{\Grad}{ \nabla _x}
\newcommand{\Gradh}{ \nabla _h}
\newcommand{\GradD}{ \nabla _{\cal D}}
\newcommand{\Gradd}{ \GradD}
\newcommand{\GradB}{ \nabla _{\cal B}}
\newcommand{\Gradq}{ \nabla _Q}
\newcommand{\Gradpiv}{ \nabla^{\Piv}_{\!\! \mesh}}
\newcommand{\Div}{ {\rm div} _x}
\newcommand{\Divw}{ {\rm div} _\mesh^{\bf W}}
\newcommand{\Divq}{ {\rm div} _\mesh^{Q}}
\newcommand\Divh{\mathrm{div}_h}
\newcommand{\Lap}{\Delta_x }
\nc{\shkl}{\sum_{\sigma =K|L\in \faces}}
\nc{\Hc}{ {P} }
\newcommand{\Qh}{Q_h}
\newcommand{\vQh}{{\bf Q}_h}
\newcommand{\vWh}{ {\bf W}_h}
\newcommand{\Whi}{W_{i,h}}
\newcommand{\Piq}{\Pi _Q}
\newcommand{\Piv}{ \Pi _ \E}
\newcommand{\Pid}{ \Pi _{\epsilon}}
\newcommand{\Pivi}{ \Pi _ \E^{(i)}}
\nc{\Dt}{D_t}
\nc{\pdt}{\pd_t}
\nc{\eps}{\varepsilon}
\newcommand{\norm}[1]{\lVert#1\rVert}
\newcommand{\co}[1]{\text{co}\{#1\} }
\newcommand{\avs}[1]{\left\{\!\!\left\{ #1\right\}\!\!\right\}}
\newcommand{\avsi}[1]{\left\{\!\!\left\{ #1\right\}\!\!\right\}^{(i)}}
\newcommand{\ve}{\mathbf{e}}
\renewcommand{\ve}{\bm{e}}
\newcommand{\vei}{\ve_i}
\newcommand\vr{\varrho}
\newcommand\vn{\bm{n}}
\newcommand\vu{\bm{u}}
\newcommand\vm{\bm{m}}
\newcommand{\vM}{\bm{M}}
\newcommand\vv{\bm{v}}
\newcommand\vx{x}
\newcommand{\Td}{\mathbb{T}^d}
\newcommand\vrh{\vr_h}
\newcommand\vuh{\vu_h}
\newcommand\vvh{\vv_h}
\newcommand\vUh{\vU_h}
\newcommand\sumi{\sum_{i=1}^d}
\newcommand\sumj{\sum_{j=1}^d}
\newcommand{\bS}{ \mathbb{S}}
\newcommand{\bI}{\mathbb{I}}
\newcommand{\R}{\mathbb{R}}
\newcommand\mesh{\mathcal{T}}
\newcommand\faces{\mathcal{E}}
\newcommand\edges{\faces}
\newcommand\edgesK{\faces(K)}
\newcommand\edgesi{\faces_i}
\newcommand\edgesiK{\faces_i(K)}
\newcommand\facei{\mathcal{E}_i}
\nc{\E}{\mathcal{E}}
\nc{\calD}{\mathcal{D}}
\nc{\Di}{\mathcal{D}_i}
\nc{\Dj}{\mathcal{D}_j}
\nc{\Bii}{\mathcal{B}_{i,i}}
\nc{\Bij}{\mathcal{B}_{i,j}}
\nc{\Bji}{\mathcal{B}_{j,i}}
\nc{\Eij}{ \widetilde{\E}_{i,j}}
\nc{\Eji}{ \widetilde{\E}_{j,i}}
\newcommand{\neighdual}{ \mathcal{N}^\star }
\newcommand\sumfaceK{\sum_{\sigma \in \edgesK}}
\newcommand\sumfaceiK{\sum_{\sigma \in \edgesiK}}
\newcommand\sumK{\sum_{K \in \mesh}}
\newcommand\sumfaceinti{\sum_{\sigma \in \edgesi}}
\newcommand{\stiei}{\sum_{\sigma \in \edgesi }}
\nc{\intDsi}[1]{ \stiei \int_{D_\sigma}#1\dx}
\nc{\intT}[1]{ \int_0^\tau #1 \dt}
\nc{\intTB}[1]{ \int_0^\tau \left(#1 \right)\dt}
\newcommand\aleq{\lesssim}
\newcommand{\vih}{ v_{i,h} }
\newcommand{\uih}{ u_{i,h} }
\newcommand{\ujh}{ u_{j,h} }
\newcommand{\auih}{ \Ov{\uih} }
\nc{\gradu}{\nabla \vu}
\nc{\bn}{\vn}
\nc{\bfr}{\bm{r}}
\nc{\bfn}{\vn}
\nc{\bfu}{\vu}
\nc{\bfv}{\vv}
\nc{\bfx}{\vx}
\nc{\avu}{\Piq \vu}
\nc{\avuh}{\Piq \vuh}
\nc{\vU}{\bm{U}}
\nc{\avU}{\overline{\bm{U}}}
\newcommand{\sigmap}{\sigma _{K\text{,} i +}}
\newcommand{\sigmam}{\sigma _{K\text{,} i -}}
\nc{\abs}[1]{\left\lvert#1 \right\rvert}
\nc{\jump}[1]{\left\llbracket#1\right\rrbracket}
\newcommand{\Ov}[1]{\overline{#1}}
\newcommand{\bfphi}{\boldsymbol{\phi}}
\nc{\intO}[1]{\int_{\Omega} #1 \dx}
\nc{\intOB}[1]{\int_{\Omega} \left(#1 \right) \dx}
\nc{\intTd}[1]{\int_{\mathbb{T}^d} #1  \dx}
\nc{\intTdB}[1]{\int_{\mathbb{T}^d} \left(#1 \right) \dx}
\nc{\intTO}[1]{\int_0^\tau \int_{\Td} #1 \dx\dt}
\nc{\intTOB}[1]{\int_0^\tau \int_{\Td}\left( #1 \right) \dx\dt}
\nc{\intS}[1]{\int_{\sigma} #1 \dS}
\nc{\intE}[1]{\int_{\faces} #1 \dS}
\nc{\intSh}[1]{\int_{\sigma} #1 \dS}
\nc{\intK}[1]{\int_{K} #1 \dx}
\nc{\ceil}[1]{\lceil#1\rceil}
\nc{\floor}[1]{\lfloor#1\rfloor}
\newtheorem{thm}{Theorem}[section]
\newtheorem{lemma}[thm]{Lemma}
\newtheorem{defi}[thm]{Definition}
\newtheorem{prop}[thm]{Proposition}
\newtheorem{remark}{Remark}
\nc{\mycolor}{\color{magenta}}
\nc{\rdele}{\color{brown}\sout}
\nc{\cblue}[1]{\textcolor{blue}{#1}}
\nc{\cred}{\color{red}}
\nc{\cmag}{\color{magenta}}
\nc{\cbrown}{\color{brown}}
\nc{\mydele}[1]{\textcolor{brown}{\sout {#1}}}
\begin{document}

\title{Improved error estimates for the finite volume  and  the MAC schemes for the compressible Navier--Stokes system\footnotetext{This research was initiated during our ``Research in Pairs" stay at the Mathematisches Forschungsinstitut Oberwolfach in 2021.}}

\author{Eduard Feireisl\thanks{The research of E.F. and B.S. leading to these results has received funding from the
Czech Sciences Foundation (GA\v CR), Grant Agreement 21-02411S. The Institute of Mathematics of the Academy of Sciences of
the Czech Republic is supported by RVO:67985840.\newline
\hspace*{1em} $^\spadesuit$M.L. has been funded by the Deutsche Forschungsgemeinschaft (DFG, German Research Foundation) - Project number 233630050 - TRR 146 as well as by  TRR 165 Waves to Weather. She is grateful to the Gutenberg Research College and Mainz Institute of Multiscale Modelling for supporting her research.}
\and M\' aria Luk\' a\v cov\' a -- Medvi\softd ov\' a$^{\spadesuit}$
 \and Bangwei She$^{*, \clubsuit}$
}

\date{}

\maketitle

\bigskip

\centerline{$^*$ Institute of Mathematics of the Academy of Sciences of the Czech Republic}
\centerline{\v Zitn\' a 25, CZ-115 67 Praha 1, Czech Republic}
\centerline{feireisl@math.cas.cz, she@math.cas.cz}

\bigskip
\centerline{$^\spadesuit$ Institute of Mathematics, Johannes Gutenberg-University Mainz}
\centerline{Staudingerweg 9, 55 128 Mainz, Germany}
\centerline{lukacova@uni-mainz.de}

\bigskip

\centerline{$^{\clubsuit}$Academy for Multidisciplinary studies, Capital Normal University}
\centerline{ West 3rd Ring North Road 105, 100048 Beijing, P. R. China}

%\date{}

\bigskip

\begin{abstract}
We present new error estimates for the finite volume and finite difference methods applied to the compressible Navier--Stokes equations.
The main innovative ingredients of the improved error estimates are a refined consistency analysis combined with a continuous version of the relative energy inequality. Consequently, we obtain better convergence rates than those available in the literature so far. Moreover, the
error estimates hold in the whole physically relevant range of the adiabatic coefficient.
\end{abstract}

{\bf Keywords:}
compressible Navier--Stokes system, error estimates, relative energy, strong solution, upwind finite volume method, Marker-and-Cell finite difference method

\section{Introduction}\label{sec:1}

The Navier--Stokes equations governing the motion of viscous compressible fluids have numerous applications in engineering, physics, meteorology or biomedicine.
In this paper we consider the viscous barotropic fluid endowed, for simplicity, with the isentropic
pressure--density state equation $p= a \vr^{\gamma},$ where
$a > 0$ is a positive constant, and $\gamma >1$ denotes the adiabatic coefficient. The global--in--time existence of weak solutions is known for any $\gamma > \frac{d}{2}$ in the $d$-dimensional
setting, see Lions~\cite{Lions} and \cite{edo}. More recently, Plotnikov and Vaigant \cite{PV} extended
the existence theory for any $\gamma \geq 1$ if $d=2$. Unfortunately,
the multilevel approach used in the existence proof is rather difficult to adapt directly to
a numerical scheme; whence the numerical analysis of the problem remains rather incomplete.

In the last few decades, many efficient and robust numerical methods have been proposed to simulate
the motion of viscous compressible fluid flows. We refer the reader to the  monographs by Dolej\v{s}\'i and Feistauer~\cite{DoFei}, Eymard, Gallou\"et and Herbin \cite{EyGaHe}, Feistauer~\cite{feist1}, Feistauer, Felcman and Stra\v{s}kraba~\cite{feist2}, Toro~\cite{toro}, and the references therein.
Despite a good agreement of the obtained results with experiments, a rigorous
convergence analysis with the associated error estimates have been performed only in a few particular cases.

In his truly pioneering work, Karper \cite{Karper}, see also \cite{feireisl2017numericsbook}, showed convergence (up to a subsequence) of a mixed finite element-finite volume (or discontinuous Galerkin) approximation to a weak solution of the compressible multidimensional Navier--Stokes system under the technical restriction $\gamma > 3$. His proofs basically follows step by step the
existence theory developed in \cite{edo} and as such is difficult to adapt to other numerical methods. Moreover, as the weak solutions are not known to be unique, the result holds up to a subsequence and no convergence rate is available.

Recently, see \cite{FL,FLMS_FVNS,FeLMMiSh}, we have developed a new approach based on the concept of more general dissipative weak (dissipative measure-valued) solution, which, combined with the weak--strong uniqueness and conditional regularity results, yields a rigorous proof of convergence for the mixed finite element-finite volume, finite volume and finite difference Marker-and-Cell (MAC) methods for any $\gamma >1$ as long as the sequence of numerical solution remains uniformly bounded and/or
if the strong solution exists.
The aim of the present paper is to derive error estimates for the finite volume and the MAC methods for full range of the adiabatic coefficient $\gamma >1.$

There are several results concerning error estimates for the compressible Navier--Stokes equations. Under the assumption of the $L^2$-bounds of the discrete derivatives of the numerical solutions, Jovanovi\'{c}~\cite{Jovanovic} studied the convergence rate of a finite volume-finite difference method to the barotropic Navier--Stokes system. In \cite{Liu1, Liu2} Liu analyzed the errors  for $P^k$ conforming finite element method,  $k\geq 2$, assuming the existence of a suitably regular smooth solution.
%in the class of $L^2(0,T;H^{k+1}(\Omega;\R^{d+1})) \cap W^{1,2}(0,T;H^k(\Omega;\R^{d+1})) \cap H^2(0,T;L^2(\Omega;\R^{d+1}))$,
However, the stability of the method with respect to the discrete energy was not investigated.

Furthermore,  Gallou\"et et al.~\cite{GallouetMAC, Gallouet_mixed} analyzed the {\em unconditional} convergence rates of the mixed finite volume-finite element method  \cite{feireisl2017numericsbook} and  the MAC scheme for $\gamma>3/2$ in the dimension $d=3$.  Similar results have been obtained by Mizerov\'a and She~\cite{MS_MAC}.
All the above mentioned convergence results are based on a discrete version of the relative energy inequality estimating the error between the numerical and the strong solution.  The obtained convergence error is $\mathcal{O}(h^A),$  where $h>0$ is a mesh parameter and
$A = \min\left\{ \frac{2\gamma-3}{\gamma} ,\frac12 \right\}$, cf.~ \cite{GallouetMAC, Gallouet_mixed, MS_MAC}. In particular, the convergence order tends to zero when $\gamma \to \frac{3}{2}$ and remains positive only if $\gamma>3/2.$  Moreover, if $\gamma \geq 2 $, the convergence rate is only $\frac 1 2$ in the energy norm, though the numerical experiments indicate the second order convergence rate.

In view of the existing results, the main novelty of the
present paper is two-fold:
\begin{itemize}
\item  Extending the  error analysis to the full range $\gamma > 1$.
\item  Improving the convergence rate via a detailed consistency and error analysis.
\end{itemize}

Following the strategy proposed in the monograph \cite[Chapter 9]{FeLMMiSh}, we combine the standard consistency errors with the ``continuous'' form of the relative energy inequality. In contrast with
the existing methods based on ad hoc construction of an approximate relative energy inequality, the new approach is rather versatile and free of additional discretization errors.
 In particular, we can handle any consistent energy stable numerical method in the same fashion. We focus on the finite volume method proposed in \cite{FeLMMiSh} and the MAC method from \cite{MS_MAC}. The application to the mixed finite element-finite volume method of Karper~\cite{Karper}
was studied independently and presented in the recent work by
  Novotn\'y and Kwon~\cite{NOKW}.
Compared to the previous results of Gallou\"et et al.~\cite{GallouetMAC, Gallouet_mixed}, we employ the consistency formulation of the numerical solution where the test function is smooth. This new approach avoids the complicated integration by parts formulae on the discrete level and improves the convergence rates of the MAC method presented in \cite{GallouetMAC, MS_MAC}.

The paper is organized in the following way. After presenting the continuous model and the corresponding relative energy, we formulate the numerical schemes: the finite volume and the MAC method,
see Section~\ref{sec:method}. Next, we discuss their energy stability and consistency. The main results on the error estimates are formulated and proved in Section~\ref{error}.

\subsection{Compressible Navier--Stokes system}
We begin with formulating the compressible Navier--Stokes system
\begin{equation}\label{PDE}
\begin{aligned}
\partial_t \vr + \Div(\vr\vu) &= 0,%\label{PDED}
\\
\partial_t (\vr \vu) + \Div(\vr \vu \otimes \vu) + \Grad  p(\vr) &= \Div \mathbb{S} % \label{PDEM}
\end{aligned}
\end{equation}
in the time--space cylinder $[0,T] \times \Omega $, $\Omega \subset R^d, d=2,3$, where $\vr$ is the density, $\vu$ is the velocity field, and
$\bS$ is the viscous stress tensor given by
\[\bS = \mu (\Grad \vu + \Grad^T \vu - \frac 2 d  \Div \vu \bI) + \lambda \Div \vu \bI,\; \mu >0, \; \lambda \geq 0. \]
The pressure is assumed to satisfy the {\em isentropic} law
\begin{equation}\label{assumption_p}
p=a \vr^\gamma, \; a>0, \; \gamma>1.
\end{equation}

To avoid technical problems related to a proper numerical
approximation of the physical boundary, we impose the periodic boundary conditions and identify the computational domain with the flat torus $$\Omega=\tor \equiv \left( [0,1] |_{\{ 0,1 \}} \right)^d. $$
The system \eqref{PDE} is supplemented with  finite energy initial data $(\vr_0, \vu_0): \tor \to \R^+ \times \R^d$,
%$$
%\vr(0,\vx) = \vr_0 > 0, \ \vu(0,\vx) = \vu_0,\  \mbox{ and } \intTd{\frac 1 2 \vr_0 | \vu_0|^2 + \frac{ a \vr_0^\gamma}{\gamma -1} } < \infty.
%$$
\begin{equation}\label{INI}
\vr(0,\vx) = \vr_0 > 0, (\vr\vu)(0,\vx) =  \vr_0 \vu_0,\ \mbox{ and } E_0 = \intTdB{\frac 1 2  \vr_0 | \vu_0 |^2 + \Hc(\vr_0)} < \infty,
\end{equation}
where
$\Hc $ is the so-called pressure potential, $\Hc(\vr) = \frac{a \vr^{\gamma}}{\gamma-1}$ for the  isentropic gas law \eqref{assumption_p}.
%satisfying $\vr \Hc'(\vr) -\Hc(\vr) =p(\vr)$ and $ \Hc''(\vr) = \frac{p'(\vr)}{\vr}$.
%that in accordance with the isentropic pressure law \eqref{assumption_p} reads $\Hc(\vr) = \frac{p(r)}{\gamma-1}$.

\subsection{Relative energy}

The main tool to evaluate the distance between numerical and strong solutions is the relative energy functional, cf. \cite{FJN}:
\[
\frakE(\vr, \vu| r, \vU) = \intTdB{ \frac12 \vr \abs{\vu- \vU}^2 + \bbE(\vr|r)  },   \mbox{ with } \bbE(\vr|r)= \Hc(\vr) - \Hc'(r) (\vr -r ) -\Hc(r).
\]
%where the  moment $\vm = \vr \vu.$
%\[ \text{ and }\;
%\Hc(\vr) = \vr \int_1^{\vr} \frac{p(z)}{z^2} dz
%\;\text{ satisfying }\;  \vr \Hc'(\vr) -\Hc(\vr) =p(\vr), \; \Hc''(\vr) = \frac{p'(\vr)}{\vr}.
%\]
As pointed out, relative energy functionals are often used to estimate the distance between a suitable weak solution and the strong solution; whence yielding the weak-strong uniqueness property. Recently, a discrete version of the relative energy has been applied in the error analysis of numerical schemes,  see~\cite{GallouetMAC, Gallouet_mixed,MS_MAC}.

\subsection{Classical solutions}

It will be useful to identify the regularity class of
smooth (classical) solutions to
the Navier--Stokes system~\eqref{PDE} inherited from the initial data~\eqref{INI}. The following result can be the deduced from
\cite[Theorem 3.3]{BrFeHo2016} and \cite[Proposition~2.2]{Hosek}.
\begin{prop}
\label{prop1}
Let the initial data belong to the class
\[
	\vr_0 \in C^3( \mathbb{T}^d), \ \vr_0 > 0 \ \mbox{in}\ \mathbb{T}^d,\
	\vu_0 \in C^3( \mathbb{T}^d; R^d).
\]
Let $(\vr, \vu)$ be a weak solution to problem \eqref{PDE} originating from the initial data \eqref{INI}
such that
\begin{equation}
	\label{r_bound}
	0 \leq \vr \leq \bar{r} \quad \mbox{ and } \quad  | \vu | \leq \bar{u} \mbox{ a.e. in } (0,T) \times \mathbb{T}^d.
\end{equation}

Then $(\vr, \vu)$ is a classical solution of \eqref{PDE}-\eqref{INI} in  $[0,T] \times \mathbb{T}^d$.

If, in addition, $\vr_0$, $\vu_0$ belong to the class
\begin{equation}
\label{INI1}
\vr_0 \in W^{k,2}({\mathbb{T}^d}), \qquad \vu_0 \in  W^{k,2}(\mathbb{T}^d; \R^d), \quad k \geq 6,
\end{equation}
then $\vr \in C([0,T]; W^{k,2}(\mathbb{T}^d))$, $\vu \in C([0,T]; W^{k,2}(\mathbb{T}^d; \R^d))$,
and the following estimate hold
\begin{eqnarray}
\label{bounds_exact}
&& {  \norm{\pd_t^\ell \vr}_{C([0,T] \times \mathbb{T}^d) } +} \| \vr \|_{C^1([0,T] \times \mathbb{T}^d )} +
\| 1/\vr \|_{C([0,T] \times {\mathbb{T}^d})}  + \|  \vr \|_{C([0,T]; W^{k,2}(\mathbb{T}^d) ) }  %+ \partial_t \Grad r \|_{C([0,T]; L^6(\tor; \R^{d}) ) }+ \| \partial^2_{t,t} r\|_{L^2(0,T; L^6(\tor)) }
\leq D ,\ \ell = 1,2,
\nonumber \\
&&
{  \norm{\pd_t^\ell \vu}_{C([0,T] \times \mathbb{T}^d ;\R^d) } } + \| \vu \|_{C^1([0,T] \times \mathbb{T}^d ;\R^d )} 
  + \| \vu \|_{C([0,T]; W^{k,2}(\mathbb{T}^d; \R^d))}
%\| \vU \|_{C([0,T];C^2( \overline{\tor}; \R^3))} +  \| \partial_t \Grad \vU \|_{C([0,T]; L^6(\tor; \R^{3\times 3}) ) } + \| \partial^2_{t,t} %\vU\|_{L^2(0,T; L^6(\tor))}
\leq D, \ \ell = 1,2,
\end{eqnarray}
where $D$ depends solely on $T, \bar{r}, \bar{u}$ and the initial data $(\vr_0, \vu_0)$ via the norm $\| (\vr_0, \vu_0) \|_{W^{k,2}(\mathbb{T}^d; \R^{d + 1})}$ and
$\min_{x \in {\mathbb{T}^d}} \vr_0(x).$
\end{prop}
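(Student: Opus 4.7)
The proof strategy is a standard bootstrap that alternates between parabolic regularity for the velocity and transport regularity for the density. The first task is to upgrade the pointwise bound \eqref{r_bound} to a two-sided bound away from zero, i.e.\ to show that there exists $\underline{r} > 0$ such that $\vr(t,x) \geq \underline{r}$ on $[0,T] \times \mathbb{T}^d$. Because the velocity field is a priori only bounded, this is not immediate; my plan is to first obtain enough smoothness of $\vu$ to control $\Div \vu \in L^\infty$, and then to use the renormalized form of the continuity equation (or equivalently, to integrate along Lagrangian trajectories) to propagate the positive lower bound from the $C^3$ initial datum $\vr_0$.

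Once $\underline{r} \leq \vr \leq \bar{r}$ is known, the momentum equation can be rewritten as a non-degenerate parabolic system for $\vu$ with bounded coefficients and source,
\[
\vr \,\partial_t \vu \;-\; \mu \Lap \vu \;-\; \Bigl(\tfrac{(d-2)\mu}{d} + \lambda \Bigr) \Grad \Div \vu \;=\; -\vr (\vu \cdot \Grad) \vu \;-\; \Grad p(\vr).
\]
Standard $L^p$--$L^q$ maximal regularity applied on short time intervals (and then bootstrapped on $[0,T]$ using the a priori $L^\infty$ bounds) gives a first gain of differentiability for $\vu$; this is where one invokes \cite[Thm.~3.3]{BrFeHo2016}. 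From $\vu \in W^{1,p}_t W^{2,p}_x$ with $p$ large enough, Sobolev embedding yields $\Grad \vu \in L^\infty$, so the flow map of $\vu$ is $C^1$ in $x$; pushing $\vr_0 \in C^3$ along this flow gives $\vr \in C^1([0,T]\times \mathbb{T}^d)$, and then the momentum equation returns $\vu \in C^1$, closing the first level of the argument. This establishes the classical solution claim.

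To upgrade to the higher Sobolev regularity \eqref{bounds_exact}, I would iterate the same loop. Given $(\vr,\vu) \in W^{j,2}$, differentiate the continuity equation $j$ times in space to obtain a transport equation for $\partial^{\alpha} \vr$ with right-hand side consisting of products of lower-order derivatives of $\vr$ and $\vu$; the energy estimate for transport equations, together with Moser-type product estimates, propagates $W^{j+1,2}$ regularity of $\vr$ from that of $\vr_0$ as soon as $\vu \in W^{j+1,2}_x$. Likewise, differentiating the momentum equation and applying parabolic $H^{j+1}$ estimates (for which the Sobolev index $k \geq 6$ ensures all nonlinear commutators are controllable by algebra properties of $W^{j,2}(\mathbb{T}^d)$ in $d \leq 3$) advances the velocity regularity by one step. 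Iterating yields $(\vr,\vu) \in C([0,T]; W^{k,2})$. Time derivatives are recovered algebraically: $\partial_t \vr = -\Div(\vr \vu)$ and $\partial_t \vu = \vr^{-1}[\Div \bS - \vr(\vu \cdot \Grad)\vu - \Grad p(\vr)]$, which immediately gives the $\ell = 1$ bounds, and one more differentiation in time gives $\ell = 2$.

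The main obstacle is the genuinely hyperbolic nature of the density equation: $\vr$ enjoys no parabolic smoothing, so at every stage of the bootstrap one must verify that the velocity has \emph{at least one more} spatial derivative than the density in $L^\infty_t L^2_x$; otherwise the product $\vr \Grad \vu$ in the continuity equation cannot be estimated. This forces the asymmetric choice of function spaces in which $\vr$ is transported with the Lipschitz norm of $\vu$, and it is the reason one needs $k \geq 6$ (rather than, say, $k \geq 3$) to close the estimates uniformly on $[0,T]$. The dependence of the final constant $D$ on $T, \bar r, \bar u$ and $\min \vr_0$ is tracked through the Grönwall inequality applied to the combined transport-plus-parabolic energy; this is precisely the content of \cite[Prop.~2.2]{Hosek}.
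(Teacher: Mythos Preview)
Your bootstrap plan has a genuine circularity at the very first step. You propose to ``first obtain enough smoothness of $\vu$ to control $\Div \vu \in L^\infty$'' and only \emph{afterwards} propagate the lower bound $\vr \geq \underline{r} > 0$. But the maximal $L^p$--$L^q$ regularity you want to invoke for the momentum equation requires the leading coefficient $\vr$ to be uniformly positive; with only $0 \leq \vr \leq \bar r$ the equation $\vr\,\partial_t \vu - \mu \Lap \vu - \cdots = \cdots$ is a \emph{degenerate} parabolic system, and no off-the-shelf maximal regularity theorem applies. Conversely, the renormalized continuity argument for $\vr \geq \underline{r}$ needs $\Div \vu \in L^1_t L^\infty_x$, which you do not yet have. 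So neither half of the loop can start.

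The paper avoids this deadlock by a completely different mechanism: it never bootstraps from the given weak solution. Instead it (i) invokes the local-in-time strong existence of Valli--Zajaczkowski to produce a smooth solution $(\tilde\vr,\tilde\vu)$ on a maximal interval $[0,T^*)$, where $\tilde\vr > 0$ automatically; (ii) uses the weak--strong uniqueness principle to identify $(\vr,\vu) = (\tilde\vr,\tilde\vu)$ on $[0,T^*)$; and (iii) applies the Beale--Kato--Majda-type blow-up criterion of Sun--Wang--Zhang \cite{SuWaZh}, which says the strong solution persists as long as $\vr$ and $|\vu|$ remain bounded --- exactly hypothesis \eqref{r_bound} --- to conclude $T^* > T$. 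This is what \cite[Proposition~2.2]{Hosek} actually does; it is not a direct Gr\"onwall bootstrap as you describe.

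For the higher Sobolev regularity, the paper again proceeds differently from your transport--parabolic alternation: following Gallagher \cite{Gallagher}, it exploits the isentropic pressure law to recast the system as a parabolic perturbation of a \emph{symmetric hyperbolic} system, for which $W^{k,2}$ propagation is standard once $\|\vu\|_{C([0,T];W^{2,\infty})}$ is controlled (which it is, by the first part). This is the content of \cite[Theorem~3.3]{BrFeHo2016}; you cite this reference but attribute to it the wrong role.
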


\begin{proof}
	
The first part was proved in \cite[Proposition~2.2]{Hosek} via the local existence
theory by Valli and Zajaczkowski \cite{VaZa} combined the weak--strong uniqueness principle and the conditional regularity
result by Sun, Wang and Zhang \cite{SuWaZh}. In particular, the bounds \eqref{bounds_exact} were established for
$k = 3$, $\ell = 1$.

Next, as shown in  \cite[Theorem 3.3]{BrFeHo2016}, the solution inherit higher Sobolev regularity
from the data as long as the norm $\| \vu \|_{C([0,T]; W^{2,\infty}(\mathbb{T}^d; \R^d))}$ is controlled. In particular, the estimates \eqref{bounds_exact} can be established. Similarly to Gallagher \cite{Gallagher}, the
proof in \cite{BrFeHo2016} is based on the particular isentropic form of the pressure that enables to
transform the problem to a parabolic perturbation of a symmetric hyperbolic system.

\end{proof}

\section{Numerical methods}\label{sec:method}

First, we introduce suitable notation. By $c$ we denote a positive constant independent of the discretization parameters $\TS$ and $h$.
We shall frequently write $A \aleq B$ if $A \leq cB$ and $A\approx B$ if $A \aleq B$ and $B \aleq A$.
We also write $c\in \co{a,b}$ if $\min(a,b)\leq c\leq\max(a,b)$.
Moreover,  we denote by $\norm{\cdot}_{L^p}$, $\norm{\cdot}_{L^pL^q}$, and $\norm{\cdot}_{L^pW^{q,s}}$ the norms
$\norm{\cdot}_{L^p(\tor)},$ \,  $\norm{\cdot}_{L^p(0,T;L^q(\Td))}$, and $\norm{\cdot}_{L^p(0,T;W^{q,s}(\Td))},$
respectively.
%  Finally, by $|\cdot|_{\rm max}$ we denote the maximum norm for continuous functions.

\subsection{Time discretization}\label{ss:time}
We divide the time interval $[0,T]$ into $N_t$ equidistant parts with a fixed time increment $\Delta t$ ($ =T/ N_t$). For a function $f^n$ given at the discrete time instances $t_n = n \Delta t$, $n=0,1,\cdots,N_t$, we define a piecewise constant approximation  $f(t)$ in the following way
\begin{equation*}
f(t,\cdot) = f^0\ \text{ for }\ t < \TS \mbox{ and }  f(t)= f^n \ \text{ for }\ t\in [n\TS,(n+1)\TS), \; n\in\range{1}{N_t}.
\end{equation*}
The time derivative is approximated by the backward Euler method
\begin{equation*}
D_t f = \frac{ f(t,\cdot) - f(t-\TS,\cdot)}{\TS} \quad \mbox{for all } t \in [0,T].
\end{equation*}

\subsection{Space discretization}\label{sec_Space}

To begin, we introduce a uniform structured mesh including primary, dual and bidual grids.
\medskip

%%\begin{tcolorbox}[breakable, enhanced]
\noindent {\bf\emph{Primary grid}}\quad

\noindent We call $\mesh$ the primary grid with the following properties and notations:
\begin{itemize}
\item
The domain $\Td$ is divided into compact  uniform quadrilaterals
$ \Td =\bigcup_{K\in \mesh} K$,
where $\mesh$ is the set of all elements that forms the primary grid.

\item $\edges$ denotes the set of all faces of the primary grid $\mesh$.
Given an element $K\in \mesh$,
 $\mathcal{E}(K)$ is the set of its faces;  $\facei$ is the set of all faces that are  orthogonal to the unit basis vector $\vei$;   $  \edgesiK = \edgesK \cap \facei$ for any $i \in \range{1}{d}$.
%\begin{align*}
%\facei=\{\sigma \in \faces \; |  \; \sigma \text{ is orthogonal to } \vei  \}, \quad \edgesiK = \edgesK \cap \facei, \quad i \in\range{1}{d}
%\end{align*}
%Here, $\vei$ stands for the unit basis vector of the canonical coordinate system.

\item  $h$  denotes the uniform size of the grid, meaning $|\bfx_K-\bfx_L|= h$ for any neighbouring elements $K$ and $L$, where $x_K$ and $x_L$ are the centers of $K$ and $L$, respectively.

\item $\sigma _{K\text{,} i -}$ and  $\sigma _{K\text{,} i +}$ denote the left and right face of an element $K$ in the $\ith$-direction, respectively.

%\begin{equation*}
%\bfx_\sigma  =\bfx_ K \pm \frac{h}{2}\vei,\;  i \in \range{1}{d}.
%\end{equation*}
%\item $\bfx_K$ and $\bfx_\sigma$ denote the mass centers of an element $K \in \mesh$ and a face $\sigma \in \edges,$ respectively.

\item $\mathcal{N}(K)$ denotes the set of all neighbouring elements of $K \in \mesh.$

\item  $\sigma = K|L$ denotes the face $\sigma$ that  separates the elements $K$ and $L$.
%For any $\sigma \in \faces$ adjacent to the element $K$ and its neighbour $L\in\mathcal{N}(K)$, we write $\sigma = K|L.$
Moreover, $\sigma = \overrightarrow{K|L}$ means $\sigma = K|L$ and $\bfx_L - \bfx_K = h\vei$ for some $i \in \range{1}{d}$.
% where $h$  denotes the uniform mesh size.

\item $\vn$ denotes the outer normal of a generic face $\sigma$ and $\vn_{\sigma, K}$ denotes the outer normal vector to a face $\sigma \in \edgesK.$
\end{itemize}
%\end{tcolorbox}
%\medskip
%
%Next, we define the dual of the primary grid.
%\begin{tcolorbox}
 {\bf \emph{Dual grid}}\quad

\noindent The dual of the primary grid is determined as follows.
 \begin{itemize}
 \item
 For any face  $\sigma=K|L\in \facei$, a  dual cell is defined as $D_\sigma =D_{\sigma,K} \cup D_{\sigma,L}$, where
 $D_{\sigma,K} = \{\bfx \in K, x_i \in \co{(\bfx_K)_i, (\bfx_\sigma)_i } \}$, see Figure~\ref{fig-grid} for a two dimensional graphic illustration.

\item $\Di =\left\{D_\sigma  \; |  \; \sigma \in \edgesi\right\}$, $i \in \range{1}{d}$, represents the $\ith$ dual grid of $\mesh$.
Note that for each fixed $i \in \range{1}{d}$ it holds
\begin{equation*}%\label{dualgrid}
\tor=\bigcup_{\sigma\in {\E_i}} D_{\sigma}, \;\; {\rm int}(D_\sigma)\cap{\rm int}( D_{\sigma'})=\emptyset \mbox{ for } \sigma,\sigma'\in {\E_i},\,\sigma\neq\sigma'.
\end{equation*}

\item
$\widetilde{\E}_i$ is the set of all faces of the $\ith$ dual grid $\Di$ and  $\Eij = \{ \epsilon \in \widetilde{\E}_i | %\mbox{ and }
\epsilon \mbox{ is orthogonal to } \ve_j\}$.

\item A generic  face of a dual cell $D_\sigma$ is denoted as $\epsilon \in \widetilde{\E}(D_\sigma)$, where  $\widetilde{\E}(D_\sigma)$ denotes the set of all faces of  $D_\sigma.$
%A generic dual face and its mass center are denoted by $\epsilon \in \widetilde{\E}(D_\sigma)$ and  $\bfx_{\epsilon}$, respectively.

% $\widetilde{\E}$ be the set of all faces of the bidual grid, and
%$\widetilde{\E}_i$ is the set of all faces of the bidual grid that are orthogonal to $\vei$.

\item
$\epsilon = D_\sigma | D_{\sigma'}$ denotes a dual face that separates the dual cells $D_\sigma$ and $D_{\sigma'}$. Moreover,
$\epsilon = \overrightarrow{ D_\sigma | D_{\sigma'} }$ means $\epsilon = D_\sigma | D_{\sigma'}$ and $\bfx_{\sigma'} - \bfx_{\sigma} = h \vei $ %or $(h -b_i) \vei$
for  some $i \in \{ 1,\ldots, d\}$.

\item
  $\neighdual(\sigma)$ denotes the set of all faces whose associated dual elements are the neighbours of $D_\sigma,$ i.e.,
\[ \neighdual(\sigma)=\{\sigma'\ |\  D_{\sigma'} \mbox{ is a neighbour of } D_\sigma\}.
\]
\end{itemize}
%\end{tcolorbox}
%
%
%Further, we define the dual to the dual grid that shall be named as the bidual grid.
%\begin{tcolorbox}
 {\bf \emph{Bidual grid}}\quad
 \begin{itemize}
\item
Similarly  to the definition of the dual cell, a bidual cell $D_\epsilon := D_{\epsilon,\sigma} \cap D_{\epsilon,\sigma'}$ associated to $\epsilon = D_\sigma | D_{\sigma'} \in \Eij$ is defined as the union of adjacent halves of
$D_\sigma$ and $D_{\sigma'}$, where $D_{\epsilon,\sigma} = \{x \in D_{\sigma}| x_j \in \co{(x_\sigma)_j, (x_\epsilon)_j } \} $ see Figure~\ref{bidual-grid} for a two dimensional graphic illustration.

\item
$\Bij$ denotes the $j^{\rm th}$ dual grid of $\Di$, that is
set of all bidual cells associated to the bidual faces of $\Eij$. Note that $\Bij = \mesh$ in the case of $i=j$.
%Finally, let $\widetilde{\E}$ be the set of all faces of the bidual grid, and
%$\widetilde{\E}_i$ be the set of all faces of the bidual grid that are orthogonal to $\vei$.
 \end{itemize}

%%\end{tcolorbox}

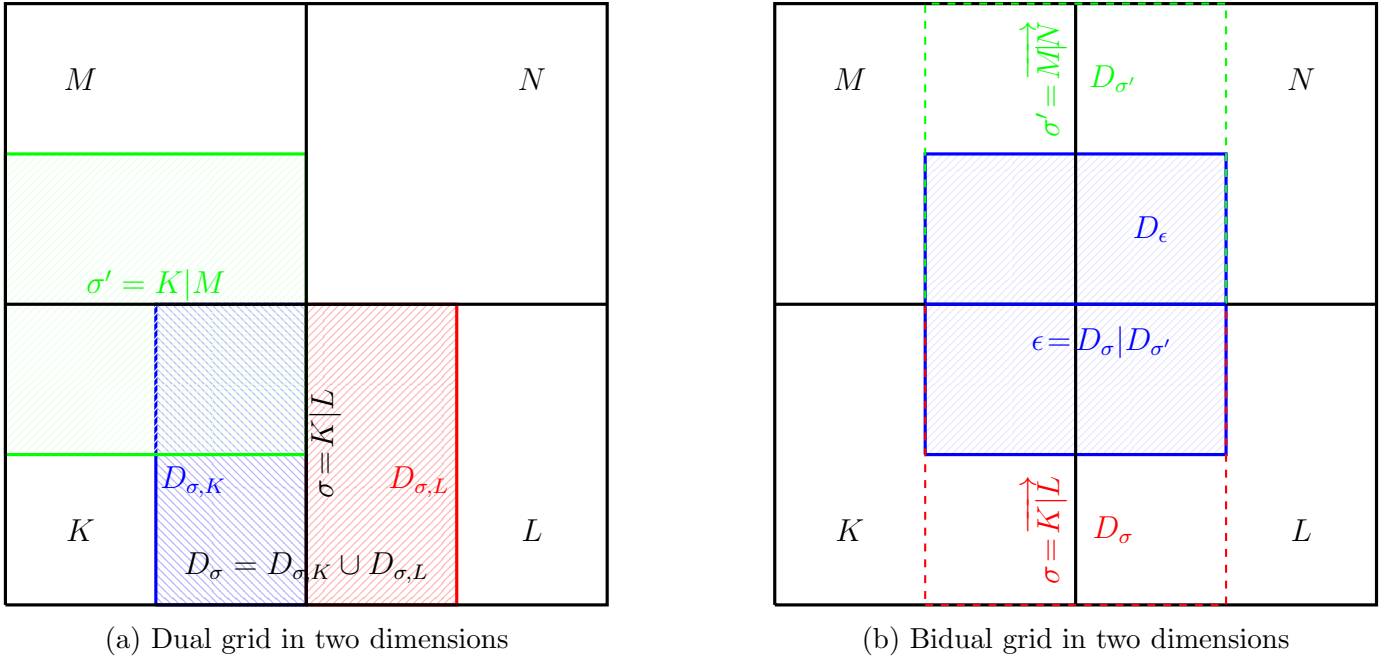
\begin{figure}[hbt]
\centering
\begin{subfigure}{0.45\textwidth}
\centering
\begin{tikzpicture}[scale=1]
\draw[-,very thick,blue=90!, pattern=north west lines, pattern color=blue!30] (2,0)--(4,0)--(4,4)--(2,4)--(2,0);
\draw[-,very thick,red=90!, pattern=north east lines, pattern color=red!30] (4,0)--(6,0)--(6,4)--(4,4)--(4,0);
\draw[-,very thick,green=10!, pattern=north east lines, pattern color=green!10] (0,2)--(4,2)--(4,6)--(0,6)--(0,2);
%% logos
\path (1,1) node[] { $K$};
\path (7,1) node[] { $L$};
\path (4.25,2.15) node[rotate=90] { $\sigma\!=\!K|L$};
\path (2.5,1.64) node[] { \textcolor{blue}{$D_{\sigma,K}$}} ;
\path (5.5,1.64) node[] { \textcolor{red}{$D_{\sigma,L}$}};
\path (4,0.5) node[] { $D_\sigma=  D_{\sigma,K} \cup D_{\sigma,L}$};
\path (2,4.25) node[] { \textcolor{green}{$\sigma'=K|M$}};
\path (1.,7) node[] { $M$};
\path (7,7) node[] { $N$};%
 %% geomety
\draw[-,very thick](0,0)--(8,0)--(8,8)--(0,8)--(0,0);
\draw[-,very thick](4,0)--(4,8);%
\draw[-,very thick](0,4)--(8,4);%
\end{tikzpicture}\caption{Dual grid in two dimensions}\label{fig-grid}
\end{subfigure}
\hfill
\begin{subfigure}{0.45\textwidth}
\centering
\begin{tikzpicture}[scale=1]
%% dual mesh filled with color
\draw[-,very thick,blue=10!, pattern=north east lines, pattern color=blue!10] (2,2)--(6,2)--(6,6)--(2,6)--(2,2);
%% logos
\path (1,1) node[] { $K$};
\path (7,1) node[] { $L$};
\path (5,5) node[] { \textcolor{blue}{$D_\epsilon$}};
\path (4.5,7) node[] { \textcolor{green}{$D_{\sigma'}$}} ;
\path (4.5,1) node[] { \textcolor{red}{$D_\sigma$}};
\path (3.6,7) node[rotate=90]{ \textcolor{green}{$\sigma'\!=\!\overrightarrow{M\!|\!N}$}};
\path (3.6,1) node[rotate=90] { \textcolor{red}{$\sigma\!=\! \overrightarrow{K|L}$}};
\path (1.,7) node[] { $M$};
\path (7,7) node[] { $N$};%
 %% geomety
\draw[-,very thick](0,0)--(8,0)--(8,8)--(0,8)--(0,0);
\draw[-,very thick](4,0)--(4,8);%
\draw[-,very thick](0,4)--(8,4);%
\draw[dashed, thick,green](2,4)--(6,4)--(6,8)--(2,8)--(2,4);%
\draw[dashed, thick,red](2,0)--(6,0)--(6,4)--(2,4)--(2,0);%
\draw[-,very thick,blue](2,4)--(6,4);%
\path (4.35,3.5) node[] { \textcolor{blue}{$\epsilon\!=\! D_\sigma | D_{\sigma'}$} };
\end{tikzpicture}\caption{Bidual grid in two dimensions}\label{bidual-grid}
\end{subfigure}
\caption{MAC grid in two dimensions}\label{MAC-grid}
\end{figure}

\paragraph{Discrete function spaces.}
We  introduce the following spaces of piecewise constant functions:
\begin{equation*}
\begin{aligned}
\Qh &=  \left\{  \phi \mid  \phi_h|_K = \text{ constant } \mbox{for all }\; K\in \mesh \right\}, \qquad \vQh =\Qh^{d}, \\
\vWh & =\left(W_{1,h},\ldots W_{d,h} \right),  \quad \Whi =  \left\{  \phi \mid  \phi_h|_{D_\sigma} = \text{ constant } \mbox{ for all } \; \sigma \in \edgesi  \right\}, \; i\in \range{1}{d}.
\end{aligned}
\end{equation*}
The corresponding projections  read
\begin{align*}
\Piq:& L^1(\tor) \to \Qh, &\
\Piq \phi= \sum_{K\in \mesh} (\Piq \phi)_K  1_{K},
\qquad &\ (\Piq \phi)_K =  \frac{1}{|K|} \int_{K} \phi \dx, \\
\Pivi:& W^{1,1}(\tor) \to \Whi, &\
\Pivi \phi = \sum_{\sigma \in \E}  (\Pivi \phi)_{\sigma} 1_{D_\sigma},
\qquad &\ (\Pivi \phi)_{\sigma} =   \frac{ 1}{|\sigma|} \intSh{ \phi },
\end{align*}
where $1_K$ and $1_{D_\sigma}$  are the characteristic functions.
Further, for any $\bfphi=(\phi_1, \ldots, \phi_d)$ we denote  %$\Piv \phi= \left(\Piv^{(1)}\phi, \ldots, \Piv^{(d)}\phi\right)$ and
$\Piv \bfphi= \left(\Piv^{(1)}\phi_1, \ldots, \Piv^{(d)}\phi_d\right).$
Moreover, for any bidual grid $D_\epsilon $ we define
\begin{equation}\label{pide}
\Pid \phi|_{D_\epsilon} =   \frac{1}{|\epsilon|}  \int_{\epsilon} \phi \dS.
\end{equation}

\subsection{Discrete operators}
\paragraph{Average and jump.}
First, for an piecewise smooth function $f_h$, we define its trace
\[f_h^{\rm out}(x) = \lim_{\delta  \to 0+} f_h(x+\delta \vn) \ \mbox{ and } \ f_h^{\rm in}(x) = \lim_{\delta  \to 0+} f_h(x - \delta \vn).\]
Then for any $r_h \in \Qh$  we define the average operator
\[	\avs{r_h}_\sigma (x) = \frac{r_h^{\rm in}(x) + r_h^{\rm out}(x)}{2}\ \mbox{ for any } x \in \sigma \in \edges.
\]
If in addition, $\sigma  \in \edgesi$ for an $i\in\{1, \ldots, d\}$, we write $\avs{r_h}_\sigma$ as $\avsi{r_h}_\sigma$
and denote
\begin{align*}
%\avs{r_h} = \sum_{\sigma \in \edges} 1_{D_\sigma}  \avs{r_h}_\sigma, \quad
\avsi{r_h} = \sum_{\sigma \in \edgesi} 1_{D_\sigma}  \avsi{r_h}_\sigma \; \forall x\in \sigma \in \edges.
\end{align*}
Analogously to the average operator, we define  the jump operator for $r_h \in \Qh$ as
\[
%\jump{r_h} = \sum_{\sigma \in \edges} 1_{D_\sigma}  \jump{r_h}_\sigma, \quad
\jump{r_h}_{\sigma}(x) = r_h^{\rm out}(x) -  r_h^{\rm in}(x).
\]
Further, for vector--valued functions $\vvh=(v_{1,h}, \ldots, v_{d,h}) \in \Qh^d$ and $ \vuh=(u_{1,h},\ldots,u_{d,h}) \in \vWh,$ we define
\begin{align*}
\avs{\vvh} &= \left( \avs{v_{1,h}}^{(1)}, \ldots, \avs{v_{d,h}}^{(d)} \right),   \quad
\\
 \auih|_K = \frac{ \uih |_{\sigmap}+ \uih|_{\sigmam} }{2},& \quad
\auih  = \sum_{K \in \mesh} 1_{K}  \auih|_K,   \; \text{ and } \;
\Ov{\vuh}  = \left( \Ov{u_{1,h}},\ldots, \Ov{u_{d,h}} \right).
\end{align*}
Note that for any $\vuh \in \vWh$ we have $\Ov{\vuh}  = \Piq \vuh $.

\paragraph{Gradient operator.}
For any $r_h \in \Qh$ and $\vuh \in \vWh$ we introduce the following gradient operators.
\begin{equation*}
\begin{split}
&  \GradD r_h(\bfx) = \left( \eth_{{\cal D}_1} r_h, \ldots,  \eth_{{\cal D}_d} r_h \right)(\bfx),\\
 & \GradB \vuh (\bfx) = \big( \GradB u_{1,h}(\bfx), \ldots, \GradB u_{d,h}(\bfx)\big) \quad \mbox{with }\
\GradB \uih (\bfx) = \big( \eth_{{\cal B}_{i1}} \uih(\bfx), \ldots, \eth_{{\cal B}_{id}} \uih(\bfx) \big),
\end{split}
\end{equation*}
where
\[
 \pdedgei r_h (\bfx) = \sum_{\sigma \in \edgesi}1_{D_\sigma} (\pdedgei r_h)_{\sigma} ,  \quad  \ (\pdedgei r_h)_{\sigma}  = \frac{r_{L} - r_{K}}{h}, \ \sigma=\overrightarrow{ K|L}\in \edgesi, \;
\]
\[
\pdBij \uih(\bfx) = \sum_{\epsilon \in \Eij } (\pdBij \uih)_{D_\epsilon} 1_{D_\epsilon}, \; (\pdBij \uih)_{D_\epsilon} = \frac{u_{\sigma'} -u_{\sigma}}{h}, \mbox{ for } \epsilon = \overrightarrow{ D_\sigma | D_{\sigma'} } \in \Eij. %\mbox{ with } \sigma,  \sigma' \in \E_i.
\]
Furthermore,  for any $\vvh \in \vQh$ and $\phi \in W^{1,2}(\tor)$ we set
\begin{eqnarray*}
&&\Gradq \vvh = \sumK 1_K \Gradq \vvh |_K \ \ \mbox{ with }  \ \ \Gradq \vvh |_K = \sumfaceK \frac{|\sigma|}{|K|} \avs{\vvh} \otimes \vn,
\\
&&\Gradpiv \phi = \left(\pdmesh{1} \Piv^{(1)} \phi, \cdots,  \pdmesh{d} \Piv^{(d)}\phi \right).
\end{eqnarray*}
Here, $\pdmeshi$ is defined for  any $\uih \in \Whi$, $i \in \range{1}{d}$ as
\begin{equation*}%\label{der1}
\begin{split}
 \pdmeshi \uih(\bfx) = \sumK 1_K (\pdmeshi \uih)_{K} , \quad  \left. \pdmeshi \uih \right \vert_{K} = \frac{ \uih|_{\sigmap} - \uih|_{\sigmam}}{h}, \ K\in\mesh.
\end{split}
\end{equation*}
Note that for any $r_h \in \Qh$ and $\uih \in \Whi$, there hold
\[
\Ov{\pdduali r_h} = \pdmeshi \avsi{r_h}
\quad \mbox{and} \quad \pdBii \uih =\pdmeshi \uih.
\]

%Further, we define
%\[
%\Gradh \vuh =
%\begin{cases}
%\GradD \vuh & \mbox{ if } \vuh \in \vQh, \\
%\GradB \vuh & \mbox{ if } \vuh \in \vWh .\\
%\end{cases}
%\]

\paragraph{Divergence operator.}
For  $\vuh \in \vWh$ and $\vvh \in \vQh$ we define the following discrete divergence operators adjoint to the above discrete gradient operators
\begin{equation*}
\Divw \vuh(\bfx)  = \sumi  \pdmeshi \uih (\bfx) \quad \mbox{and} \quad
\Divq \vvh(\bfx)  =  \sumi  \pdmeshi \avsi{\vih} (\bfx) = \sumi  \Ov{\pdedgei \vih} (\bfx)  .
\end{equation*}
%Further, we denote
%\[
%\Divh \vuh =
%\begin{cases}
%\Divq \vuh & \mbox{ if } \vuh \in \vQh, \\
%\Divw \vuh & \mbox{ if } \vuh \in \vWh .\\
%\end{cases}
%\]
%It is easy to observe for $\vuh \in \vWh$ and $\vvh \in \vQh$ that
%\[
%(\Divh \vuh)_K = \frac{1}{|K|}\sumfaceK \intSh{\vuh \cdot \bfn_{\sigma,K}}  \quad \mbox{ and } \quad (\Divh \vvh)_K = \frac{1}{|K|}\sumfaceK \intSh{ \avs{\vvh} \cdot \bfn_{\sigma,K}}  .
%\]
It is easy to observe  for any $\vvh \in \Qh$ that
\begin{equation}\label{diveq}
\Divw \avs{\vvh} = \Divq \vvh.
\end{equation}

%\paragraph{Laplace operator.} We define the discrete \emph{Laplace operators} for any $r_h\in\Qh$ and  $\vih \in \Whi$ in the following way.
%\begin{equation*}
%\begin{aligned}
%&
% \Lapm r_h(\bfx)  = \sum_{K\in\mesh}1_K (\Lapm r_h )_K,   \quad \mbox{where } \
%(\Lapm r_h )_K =  \frac{1}{h^2} \sumneigh (r_L - r_K),  \\
%& \Lape \uih(\bfx) =  \sum_{ \sigma \in \E}1_{D_\sigma} (\Lape \uih)_{\sigma}, \quad \mbox{where } \
%  (\Lape \uih)_{\sigma} = \frac{1}{h^2} \sum_{\sigma' \in \neighdual(\sigma)} \big(u_{i,\sigma'} - u_{i,\sigma} \big).
%\end{aligned}
%\end{equation*}
%Denoting $\Lapmi r_h = \pdmeshi \left( \pdedgei r_h \right)$ we observe $\Lapm r_h =\sumi \Lapmi r_h $ for $r_h \in \Qh.$

\paragraph{Upwind flux.}
Given a velocity field $\vuh \in \vQh \cap \vWh$, the upwind flux function for $r_h \in \Qh$ is given by
\begin{equation*}
%\Up [r_h,\vuh](\bfx)= \sumfaceint  \Up[r_h,\vuh]_\sigma 1_{D_\sigma}, \quad
\Up [r_h,\vuh]_\sigma %=  r_\sigma^{\up}  u_{\sigma}
= r_h^{\rm in} (u_{\sigma})^+ + r_h^{\rm out} (u_{\sigma})^- ,
\end{equation*}
where
%We firstly remark the notation
\[
r^{\pm} = \frac{1}{2} (r \pm |r|), \quad
u_\sigma= \begin{cases}
\avs{\vuh} \cdot \vn, & \mbox{ if } \vuh \in \vQh ,
\\
\vuh \cdot \vn, & \mbox{ if } \vuh \in \vWh .
\end{cases}
\]
To approximate nonlinear convective terms we apply  the following diffusive upwind flux
\begin{equation*}%\label{numflux}
\Fup  [r_h,\vuh]_\sigma =  \Up[r_h,\vuh]_\sigma - h^\eps \jump{r_h}_\sigma, \quad  \eps > -1.
\end{equation*}
For $\bfphi_h\in \vQh$ we define a vector-valued upwind flux componentwise
\[
\bUp[\bfphi_h, \vuh] = \left( \Up[\phi_{1,h},\vuh], \cdots, \Up[\phi_{d,h},\vuh] \right), \quad
\bFup[\bfphi_h, \vuh] = \left( \Fup[\phi_{1,h},\vuh], \cdots, \Fup[\phi_{d,h},\vuh] \right) .
\]

\subsection{Preliminary estimates and inequalities}
In this section we present a preliminary material.
%First, we recall the inverse estimates from \cite[Lemma 2.3]{HS_MAC}. For $r_h \in \{\Qh, \vWh\}$ it holds
%\begin{equation} \label{inv_est}
%\| r_h \|_{L^p} \lesssim h^{ d (\frac1p -\frac1q)  } \|r_h \|_{L^q} \ \mbox{ for any }\  1 \leq q \leq p \leq \infty.
%\end{equation}
%Next, by the scaling argument, we report the trace inequality, see~\cite[equation (2.26)]{FLNNS}
%\begin{equation}\label{ineq_trace}
%\norm{r_h}_{L^p(\pd K)} \leq h^{-1/p}\norm{r_h}_{L^p( K)} \mbox{ for any } p \in [1,\infty].
%\end{equation}
%
%
%
First, it is easy to check that the following integration by parts formulae hold, see e.g. \cite[Lemma 2.1]{HS_MAC}.
\begin{lemma}\label{lem_ibp}
Let $r_h, \phi_h \in \Qh,$ and \ $\vuh, \bfphi_h \in \vWh.$ Then
\begin{subequations}\label{IBP}
%\begin{equation}\label{IBP1}
%-\intTd{ \phi_h \Lapm r_h  }
%= \intTd{ \GradD r_h  \cdot \GradD \phi_h} , \quad
%-\intTd {\Lape \vuh \cdot \Phi_h }
%= \intTd {\GradB \vuh  : \GradB \Phi_h } ,
%\end{equation}
\begin{equation}\label{IBP2}
 \intTd {r_h \Divw \vuh   } %= \sumi \intTd{ \vi  \pdedgei r_h }
= - \intTd{ \vuh \cdot \GradD r_h },
\quad \intTd {r_h  \pdmeshi \uih   }
 = -  \intTd{ \uih  \pdedgei r_h } .
\end{equation}
\end{subequations}
\end{lemma}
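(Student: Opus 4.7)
The plan is to reduce the vector identity in the first formula to the scalar per-component identity in the second. Summing the second identity over $i \in \{1,\ldots,d\}$ and invoking the definitions $\Divw \vuh = \sumi \pdmeshi u_{i,h}$ and $\GradD r_h = (\pdedgei r_h)_{i=1,\ldots,d}$ immediately yields $\intTd{r_h \Divw \vuh} = -\intTd{\vuh \cdot \GradD r_h}$. So I would concentrate all the work on the scalar formula.

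For the scalar identity $\intTd{r_h \pdmeshi u_{i,h}} = -\intTd{u_{i,h} \pdedgei r_h}$, I would begin on the left-hand side by using that $r_h$ is piecewise constant on the primary mesh together with $\pdmeshi u_{i,h}|_K = h^{-1}(u_{i,h}|_{\sigmap} - u_{i,h}|_{\sigmam})$, converting the integral into the cell sum $h^{-1}\sumK |K| r_K (u_{i,h}|_{\sigmap} - u_{i,h}|_{\sigmam})$. The key step is then discrete summation by parts: each face $\sigma \in \edgesi$, say $\sigma = \overrightarrow{K|L}$, is simultaneously the ``$+$'' face of $K$ in direction $i$ and the ``$-$'' face of $L$, so $u_{i,h}|_\sigma$ appears once with coefficient $+r_K$ and once with coefficient $-r_L$. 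Reorganising by faces gives $h^{-1} \sumfaceinti |K|\, u_{i,h}|_\sigma (r_K - r_L)$. Since the grid is uniform one has $|K| = |D_\sigma| = h^d$, hence this equals $-\sumfaceinti |D_\sigma|\, u_{i,h}|_\sigma (\pdedgei r_h)_\sigma$, which is exactly $-\intTd{u_{i,h} \pdedgei r_h}$ because both factors are constant on each dual cell $D_\sigma$.

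No boundary contributions appear because $\Td$ is periodic, so every face is interior and each telescoping pair cancels correctly without stray surface terms. The only delicate aspect I anticipate is purely notational: matching the orientation convention $\sigma = \overrightarrow{K|L}$ used in the definition of $\pdedgei$ with the $\sigmap$/$\sigmam$ labels on the primary cells, so that the face weights $(r_K - r_L)/h$ come out with the correct sign as $-(\pdedgei r_h)_\sigma$. There is no genuine analytical obstacle here; the statement is really a clean bookkeeping exercise on the MAC grid, which also explains why the authors are content to refer to \cite[Lemma 2.1]{HS_MAC} rather than reproducing the details.
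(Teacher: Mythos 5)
Your argument is correct and is exactly the standard discrete summation-by-parts computation that the paper itself omits, deferring instead to the cited reference: reduce to the scalar identity, rewrite the cell sum as a face sum using that each $\sigma=\overrightarrow{K|L}\in\edgesi$ is simultaneously $\sigma_{K,i+}$ and $\sigma_{L,i-}$, and use $|K|=|D_\sigma|=h^d$ together with periodicity to avoid boundary terms. Nothing is missing.
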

\noindent Next, we report the following useful lemmas whose proofs are presented in Appendix~\ref{appa}.
\begin{lemma}\label{L22}
For any $r_h \in Q_h$,  $\vvh \in \vQh$, $\vuh \in \vWh$,  $\psi \in W^{1,2}(\Td)$ and $\vU\in W^{1,2}(\Td;\R^d)$, there hold
\begin{equation}\label{Divcd}
 \intTd{   r_h \Div \vU}
=\intTd{r_h \Divw \Piv \vU},
\end{equation}
\begin{equation}\label{Divcd2}
 \intTd{   \vvh \cdot \Grad \psi }
= \intTd{   \vvh \cdot \Gradpiv \psi }.
%=\sumi \intTd{\vih  \pdmeshi \Pivi \psi}
\end{equation}
\end{lemma}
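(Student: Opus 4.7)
My plan is to prove the two identities separately, both via cell-by-cell integration by parts in the classical sense, followed by identification of boundary integrals with the face-average projections and, for the first identity, an appeal to the discrete integration-by-parts formula of Lemma~\ref{lem_ibp}.

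For \eqref{Divcd}, I start from
\[
\intTd{r_h \Div \vU} = \sumK r_h|_K \int_{\partial K} \vU \cdot \vn_K \dS,
\]
using that $r_h$ is constant on each $K\in\mesh$. Reorganizing the sum face by face and using $\vn_{\sigma,K}=-\vn_{\sigma,L}$ on each interior face $\sigma = K|L$, one gets
\[
\intTd{r_h \Div \vU} = \sum_{\sigma = \overrightarrow{K|L}\in\E}(r_K - r_L)\int_\sigma \vU\cdot\vn_{\sigma,K}\dS.
\]
On an oriented face $\sigma = \overrightarrow{K|L}\in\E_i$ one has $\vU\cdot\vn_{\sigma,K} = U_i$, so by the very definition of $\Pivi$ the face integral equals $|\sigma|\,(\Pivi U_i)_\sigma$. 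Dividing and multiplying by $h$ and using $|D_\sigma|=h|\sigma|$ recognizes the right-hand side as
\[
-\sumi \intTd{(\pdduali r_h)\, \Pivi U_i} = -\intTd{\GradD r_h\cdot \Piv \vU}.
\]
A direct application of \eqref{IBP2} to $\Piv \vU\in \vWh$ then converts this into $\intTd{r_h\,\Divw\Piv\vU}$, which is the desired identity.

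For \eqref{Divcd2}, I proceed componentwise. Since each $v_{i,h}$ is constant on cells of $\mesh$,
\[
\intTd{v_{i,h}\,\partial_i\psi} = \sumK v_{i,h}|_K\int_{\partial K}\psi\,(\vn_K)_i\dS = \sumK v_{i,h}|_K\Bigl[\intSh{\psi}\Big|_{\sigmap}-\intSh{\psi}\Big|_{\sigmam}\Bigr].
\]
The two surface integrals are, by definition, $|\sigma|\,(\Pivi\psi)_{\sigmap}$ and $|\sigma|\,(\Pivi\psi)_{\sigmam}$. Using $|K|=h|\sigma|$, the bracket becomes $|K|\,\pdmeshi(\Pivi\psi)|_K$, hence
\[
\intTd{v_{i,h}\,\partial_i\psi} = \intTd{v_{i,h}\,\pdmeshi\Pivi\psi}.
\]
Summing over $i\in\{1,\dots,d\}$ yields \eqref{Divcd2}.

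Neither step is really an obstacle here; the only thing to be careful about is the orientation/pairing of faces in the first identity and the cancellation of periodic boundary terms, which holds because $\Omega=\tor$. Throughout I use only that $r_h$ and $\vvh$ are piecewise constant on $\mesh$, the Fubini-type identity $|D_\sigma|=h|\sigma|=|K|$ for the MAC geometry, and the elementary fact that $\Pivi$ produces exactly the face means appearing in the classical integration by parts.
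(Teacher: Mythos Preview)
Your proof is correct and follows essentially the same approach as the paper: cell-by-cell application of the classical divergence theorem followed by identification of the face integrals with the projection $\Pivi$. The only minor difference is in \eqref{Divcd}: the paper stays at the cell level throughout, recognizing directly that $\sumfaceK |\sigma|\,\Piv\vU\cdot\vn = |K|\,\Divw\Piv\vU|_K$, whereas you regroup the sum by faces, land on $-\intTd{\GradD r_h\cdot\Piv\vU}$, and then invoke \eqref{IBP2} to convert back. This detour is harmless but unnecessary; otherwise the arguments coincide.
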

%Now we are ready to introduce the discrete integration by parts formulae.
\begin{lemma}
\label{L23}
For any $\vuh \in \vWh$, $\vvh \in \vQh$ and $\psi \in W^{1,2}(\Td)$ there hold
\begin{equation}\label{IBP3}
 \intTd{   \vuh \cdot \Grad \psi}
= -  \intTd{ \Pid \psi \; \Divw \vuh } ,
\end{equation}
\begin{equation}\label{IBP4}
 \intTd{   \vvh \cdot \Grad \psi}
= -  \sumi \intTd{ \Pivi \psi   \; \pdedgei \vih  } .
\end{equation}
\end{lemma}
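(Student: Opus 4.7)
\textbf{Proof plan for Lemma \ref{L23}.} Both identities are pure quadrature manipulations that exchange a continuous gradient $\Grad\psi$ for the corresponding discrete gradients of $\vuh$ or $\vvh$, the glue being the face averages $\Pid\psi$ or $\Pivi\psi$. Since $\psi\in W^{1,2}(\Td)$, the divergence theorem applies on each mesh cell. I will establish each identity independently by localising on the cells on which $\vuh$ (respectively $\vvh$) is piecewise constant, and then re-summing by faces.

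For \eqref{IBP3}, I first write $\intTd{\vuh\cdot\Grad\psi}=\sumi\intTd{\uih\,\pd_i\psi}$ and, for each $i$, split the integral over the $i^{\text{th}}$ dual cells on which $\uih$ is constant: $\intTd{\uih\,\pd_i\psi}=\sum_{\sigma\in\edgesi}u_\sigma\int_{D_\sigma}\pd_i\psi\dx$. Applying the divergence theorem to each $D_\sigma$, only the two bidual faces $\epsilon\in\widetilde{\E}_{ii}\cap\widetilde{\E}(D_\sigma)$ perpendicular to $\ve_i$ contribute. I will then reorganise the sum according to these bidual faces: each $\epsilon=\overrightarrow{D_\sigma|D_{\sigma'}}\in\widetilde{\E}_{ii}$ contributes $(u_\sigma-u_{\sigma'})\int_\epsilon\psi\dS$. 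Using the identifications $D_\epsilon=K$ with $\sigma=\sigma_{K,i-}$, $\sigma'=\sigma_{K,i+}$ and $\Bii=\mesh$, together with $\int_\epsilon\psi\dS=|\epsilon|\,\Pid\psi|_K$ and $|\epsilon|=|K|/h$, this contribution becomes $-|K|\,\Pid\psi|_K\,\pdmeshi\uih|_K$. Summing over $K$ and $i$ gives $\intTd{\vuh\cdot\Grad\psi}=-\sumi\intTd{\Pid\psi\,\pdmeshi\uih}=-\intTd{\Pid\psi\,\Divw\vuh}$, which is \eqref{IBP3}.

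For \eqref{IBP4}, the argument is analogous but now localised on the primary grid, where $\vvh$ is piecewise constant. I write $\intTd{\vih\,\pd_i\psi}=\sum_{K\in\mesh}\vih|_K\int_K\pd_i\psi\dx$ and apply the divergence theorem on each $K$, retaining only the two faces $\sigmap,\sigmam\in\edgesiK$ perpendicular to $\ve_i$. Re-summing by faces $\sigma=\overrightarrow{K|L}\in\edgesi$, the contribution of $\sigma$ is $(\vih|_K-\vih|_L)\int_\sigma\psi\dS=-|D_\sigma|\,\Pivi\psi|_{D_\sigma}\,(\pdedgei\vih)_\sigma$, using $\int_\sigma\psi\dS=|\sigma|\,\Pivi\psi|_{D_\sigma}$ and $|D_\sigma|=h|\sigma|$. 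Summing over $\sigma\in\edgesi$ and then over $i\in\range{1}{d}$ yields \eqref{IBP4}.

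The only conceptual care needed is the bookkeeping of which bidual face/cell corresponds to which primary cell in \eqref{IBP3}: the key observation is that for the $i$-th component of $\Divw\vuh$ the relevant bidual faces lie in $\widetilde{\E}_{ii}$, so the evaluation of $\Pid\psi$ is unambiguous once the factor $\pdmeshi\uih$ (piecewise constant on $\Bii=\mesh$) is fixed. Beyond this matching, both proofs reduce to the standard cell-by-cell integration by parts, and no regularity beyond $\psi\in W^{1,2}(\Td)$ (so that traces on the codimension-one faces are well defined and the divergence theorem applies) is required.
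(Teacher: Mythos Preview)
Your proposal is correct and follows essentially the same route as the paper: localise on the cells where the discrete velocity is constant, apply the divergence theorem, keep only the faces orthogonal to $\ve_i$, and re-sum by faces. The only cosmetic difference is that the paper, after obtaining $\sumi\sum_{\sigma\in\edgesi}|D_\sigma|\,\uih\,\pdedgei\Pid\psi$ in \eqref{IBP3}, invokes the already-proved discrete integration-by-parts \eqref{IBP2} to reach the conclusion, and for \eqref{IBP4} it first quotes \eqref{Divcd2} of Lemma~\ref{L22} and then \eqref{IBP2}; you instead carry out the face re-summation by hand in both cases, which amounts to reproving those identities inline.
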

%Applying Lemmas~\ref{L22}, \ref{L23} we derive the following result.
\begin{lemma}\label{L24}
For any $\vuh \in \vWh$, $\vvh \in \vQh$ and $\vU \in W^{2,2}(\Td; \R^d)$, we have
\begin{subequations}
\begin{equation}\label{IBP5}
\intTd{  \avuh \cdot \Lap \vU } =
-   \sumi \sumj \sum_{\epsilon = D_\sigma|D_{\sigma'} \in \Eji } \int_{D_\epsilon} \pdBji \ujh  \left(  \frac{ (\Pivi \pd_i U_j)_{D_\sigma} +(\Pivi \pd_i U_j)_{D_{\sigma'}} }{2}  \right) \dx ,
\end{equation}
\begin{equation}\label{IBP6}
  \intTd{   \vuh \cdot \Grad \Div \vU} =  - \intTd{\Divw \vuh \Pid( \Div \vU) } ,
\end{equation}
\begin{equation}\label{IBP7}
  \intTd{  \vvh \cdot \Lap \vU }  = - \intTd{    \GradD \vvh : \Piv \Grad \vU  },
\end{equation}
%\begin{equation}\label{IBP8}
%   \intTd{   \vvh \cdot \Grad \Div \vU}
%= -  \sumi \intTd{ \pdedgei \vih \Pivi( \Div \vU) },
%\end{equation}
\begin{equation}\label{IBP9}
 \intTd{   \avs{\vvh} \cdot \Grad \Div \vU}
=-  \intTd{ \Pid \Div \vU \; \Divq \vvh } .
\end{equation}
\end{subequations}
\end{lemma}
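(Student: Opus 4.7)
The four identities are all discrete integration-by-parts formulae in which a smooth test function $\vU$ is paired against a piecewise-constant discrete function on the primary, dual, or bidual grid. My general strategy is: identify the natural cell on which the discrete factor is constant, invoke the divergence theorem cell-wise, and then re-index the resulting boundary sum over faces so as to recognize the discrete gradient/divergence and the face-averaged projections $\Pivi$, $\Pid$ on the other side.

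First I would dispose of \eqref{IBP6} and \eqref{IBP9}, which reduce to already-proved identities. For \eqref{IBP6} I simply take $\psi=\Div \vU\in W^{1,2}(\tor)$ in \eqref{IBP3}. For \eqref{IBP9} I observe that each component $\avs{v_{i,h}}^{(i)}$ is constant on the dual cell $D_\sigma$, so $\avs{\vvh}\in\vWh$; applying \eqref{IBP3} to this vector with $\psi=\Div\vU$ and then invoking \eqref{diveq} to replace $\Divw\avs{\vvh}$ by $\Divq\vvh$ yields the claim.

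For \eqref{IBP7}, which is the main identity, I proceed directly. Since $v_{j,h}$ is constant on each primary cell $K$,
\[
\intTd v_{j,h}\, \partial_i \partial_i U_j \dx \;=\; \sumK v_{j,K}\int_{\partial K}(\partial_i U_j)\, (n_{\sigma,K})_i \dS,
\]
and only the two $i$-faces $\sigma_{K,i\pm}$ of $K$ contribute. Regrouping the boundary sum by $\sigma=\overrightarrow{K|L}\in\edgesi$, the face $\sigma$ carries the combined term $|\sigma|(v_{j,K}-v_{j,L})(\Pivi \partial_i U_j)_\sigma = -|D_\sigma|(\pdduali v_{j,h})_\sigma (\Pivi \partial_i U_j)_\sigma$, where I used $|D_\sigma|=h|\sigma|$ on the uniform grid. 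Summing over $i$ and $j$ gives exactly $-\intTd \GradD\vvh:\Piv\Grad\vU$.

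Finally, \eqref{IBP5} follows by applying \eqref{IBP7} with $\vvh=\avuh=\Piq\vuh\in\vQh$, and then rewriting $\pdduali \Ov{u_{j,h}}$ in terms of the bidual difference $\pdBji u_{j,h}$. From $\Ov{u_{j,h}}|_K=\tfrac12(u_{j,\sigma_{K,j+}}+u_{j,\sigma_{K,j-}})$, a direct computation yields, for each $\sigma=\overrightarrow{K|L}\in\edgesi$,
\[
(\pdduali \Ov{u_{j,h}})_\sigma \;=\; \tfrac12\big((\pdBji u_{j,h})_{D_{\epsilon_+}}+(\pdBji u_{j,h})_{D_{\epsilon_-}}\big),
\]
where $\epsilon_\pm\in\Eji$ are the two bidual faces carrying the $i$-difference of $u_{j,h}$ between the $j$-faces of $K$ and the corresponding $j$-faces of $L$ (in the diagonal case $i=j$ this collapses, via $\pdBii=\pdmeshi$, to the face-average of $\pdmeshi u_{i,h}$). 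Substituting this expression and reindexing the sum from $\sigma\in\edgesi$ (with weight $|D_\sigma|$) to $\epsilon=D_\tau|D_{\tau'}\in\Eji$ (with weight $|D_\epsilon|=|D_\sigma|$) produces the averaged test-function factor $\tfrac12[(\Pivi\partial_i U_j)_{D_\tau}+(\Pivi\partial_i U_j)_{D_{\tau'}}]$ and therefore the right-hand side of \eqref{IBP5}.

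The main obstacle is the combinatorial bookkeeping in \eqref{IBP5}: one must carefully pair each bidual face $\epsilon\in\Eji$ with the two $i$-faces $\sigma$ of the primary grid that see it as one of their $\epsilon_\pm$, and verify that the uniform-mesh measure identities $|D_\sigma|=h|\sigma|=|D_\epsilon|$ make the symmetric averaged factor emerge cleanly. Once this geometric dictionary is in place, the proof is a routine rearrangement, and the parallel computations needed for \eqref{IBP6}--\eqref{IBP9} are considerably shorter because they reduce to already-established formulae.
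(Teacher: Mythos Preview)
Your proposal is correct and follows essentially the same route as the paper: \eqref{IBP6} and \eqref{IBP9} are reduced to \eqref{IBP3} (together with \eqref{diveq}), \eqref{IBP7} is obtained by a cell-wise divergence theorem---the paper packages this step as an application of \eqref{Divcd} followed by \eqref{IBP2}, which is the same computation in modular form---and \eqref{IBP5} is derived from \eqref{IBP7} with $\vvh=\avuh$ plus the identity $(\pdduali \Ov{u_{j,h}})_\sigma=\tfrac12\sum_{\epsilon\in\Eji(D_\sigma)}(\pdBji u_{j,h})_{D_\epsilon}$ and a reindexing over bidual faces.
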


%Some useful estimates related to the projections onto the discrete function spaces are comprised in the following lemma.
\begin{lemma}\label{NC}
Let $\vvh \in \vQh$, $\vuh \in \vWh$, $\vU \in W^{2,2}(\Td; \R^d)$,
and $\bfPhi \in W^{3,2}(\Td; \R^d)$.
Then for any $i,j\in\range{1}{d}$, we have
\begin{subequations}
\begin{equation}\label{NC1}
\norm{\Piq \vuh -\vuh}_{L^2} \leq \frac{h}2 \norm{\GradB \vuh}_{L^2}, \quad
\norm{ \avs{\vvh} -\vvh}_{L^2} \leq \frac{h}2 \norm{\GradD \vvh}_{L^2},
\end{equation}
\begin{equation}\label{NC2}
%\norm{\GradD \Piq{\phi}  -  \Grad \phi}_{L^\infty} \aleq h \norm{\phi}_{C^2}, \quad
%\norm{\GradB \Piv{\bfphi}  -  \Grad \bfphi}_{L^\infty} \aleq h \norm{\bfphi}_{C^2}, \quad
 \norm{ \Pivi \pd_i U_j - \pd_i U_j }_{L^2} \leq h \norm{\vU}_{W^{2,2}}
\end{equation}
\begin{equation}\label{NC3}
\norm{  \Div \vU  - \Pid \Div \vU}_{L^2} \leq  h\norm{ \vU}_{W^{2,2}}, \quad
\norm{  \Pid \Div \vU -  \Pivi \Div \vU }_{L^2} \leq h \norm{ \vU}_{W^{2,2}}.
\end{equation}
\begin{equation}\label{NC4}
\norm{  \Grad \Div \bfPhi - \Gradq  \Divh \Piq \bfPhi}_{L^2} \leq  h\norm{ \bfPhi}_{W^{3,2}}, \quad
\norm{  \Lap \bfPhi -   \Divw \Gradd \Piq \bfPhi }_{L^2} \leq h \norm{ \bfPhi}_{W^{3,2}}.
\end{equation}
\end{subequations}
\end{lemma}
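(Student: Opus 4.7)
The plan is to prove all four groups of estimates by a combination of Taylor expansion on (sub)cells and classical Bramble--Hilbert / averaging inequalities. The key observation is that every left-hand side is a ``projection error'' — either between a smooth function and a piecewise constant average of it on some sub-grid (primary, dual, bidual, face), or between two such averages on nearby sub-grids — so in each case one reduces to bounding an oscillation on a cube of side $h$ by a Poincaré-type inequality.

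First, for \eqref{NC1} I would exploit the half-cell geometry directly. On a primary cell $K$, the definition of $\Piq \uih$ and $\auih$ gives $\Piq \uih |_K=\tfrac12\bigl(\uih|_{\sigmap}+\uih|_{\sigmam}\bigr)$, so on the left-half $D_{\sigmam,K}$ one has $\uih-\Piq\uih|_K=\tfrac{h}{2}\,\pd_{\Bii}\uih$ and analogously on the right-half. Squaring, integrating, summing over $K$, and using $|D_{\sigma,K}|=|K|/2$ yields the first inequality (the factor $1/2$ being geometric). An identical half-cell argument on the dual grid, with the jump $\jump{\vvh}_\sigma=h\,\eth_{\calD}\vvh$, proves the second inequality.

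For \eqref{NC2} and \eqref{NC3} I would use a standard face-/cell-averaging error bound: for $f\in W^{1,2}$ and a cube (or face) $\omega$ of diameter $\lesssim h$, $\norm{f-\fint_\omega f}_{L^2(\omega)}\lesssim h\norm{\nabla f}_{L^2(\omega)}$. Applying this with $f=\pd_i U_j$ on each dual face gives \eqref{NC2}. For \eqref{NC3}, the first estimate is the same averaging inequality applied to $f=\Div\vU$ on each bidual face. For the second, I would insert the intermediate quantity $\Div\vU$ and use the triangle inequality, since $\Pid\Div\vU$ and $\Pivi\Div\vU$ are averages of the same function on, respectively, a bidual face and a dual face, both of diameter $\lesssim h$; each contribution is handled by the averaging bound, giving the $h\norm{\vU}_{W^{2,2}}$ control.

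The main obstacle is \eqref{NC4}, which compares a continuous second-order differential operator with a composed discrete operator acting on the cell-average $\Piq\bfPhi$. Here the strategy is more delicate: I would Taylor-expand around cell (and face) centres and exploit an algebraic cancellation. Namely, one checks the pointwise identity $\Divh \Piq \bfPhi\big|_K=\Piq \Div\bfPhi\big|_K+R_K$ with an $L^2$ remainder $\|R_K\|_{L^2(K)}\lesssim h^{2}\|\nabla^{3}\bfPhi\|_{L^2(\widehat K)}$ on a slightly enlarged patch $\widehat K$; this follows from expanding $\Phi_i$ around $\bfx_K$ and noting that odd-order terms cancel by symmetry of the stencil. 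Applying $\Gradq$ (which is uniformly $O(1/h)$-bounded between discrete $L^2$-spaces) to this identity turns the $h^{2}$ remainder into an $h$ remainder, and comparing $\Gradq\Piq\Div\bfPhi$ with $\Grad\Div\bfPhi$ by the same centred-difference-vs-derivative Taylor estimate yields another $h\,\|\bfPhi\|_{W^{3,2}}$ term. Summing the two contributions gives the first estimate in \eqref{NC4}. The Laplacian estimate is proved by the same recipe, with $\Divw\Gradd\Piq\bfPhi$ written as a centred 7-/9-point stencil applied to the cell averages of $\bfPhi$ and compared with $\Lap\bfPhi$ via Taylor expansion up to order three; the cancellation of odd terms plus the Bramble--Hilbert bound produces the $h\,\|\bfPhi\|_{W^{3,2}}$ error. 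The bookkeeping of which sub-grid each projection lives on, and of the patch on which each Taylor remainder is controlled, is the main source of technical work, but no new idea is required beyond the averaging/cancellation principle sketched above.
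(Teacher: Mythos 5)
Your proof of \eqref{NC1} is exactly the argument in the paper: the half-cell identity $\uih-\Piq\uih|_K=\pm\tfrac{h}{2}\,\pdmeshi\uih$ (and its dual-grid analogue via the jump), squared and summed. For \eqref{NC2}--\eqref{NC4} the paper simply invokes ``standard interpolation error'' and omits the details; your averaging and Taylor/Bramble--Hilbert arguments are the standard way to supply them and are correct, so overall the proposal matches the paper's approach.
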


\subsection{Finite volume and finite difference methods}

We proceed by presenting a finite volume and a finite difference numerical method that will be used to approximate the Navier--Stokes system \eqref{PDE}--\eqref{INI}. Both methods have been already successfully applied in numerical simulations, see, e.g., \cite{FeLMMiSh}. In our recent work \cite{FLMS_FVNS, FeLMMiSh, MS_MAC}, the convergence was shown for $\gamma > 1$ via the concept of  dissipative measure-valued solutions. However, the error analysis was missing for the finite volume method and suboptimal for the finite difference method.

\subsubsection{Finite volume method}
We introduce the finite volume (FV) method approximating the Navier--Stokes system  \eqref{PDE}--\eqref{INI}.
\begin{defi}[FV scheme]\label{defVFV}
Given the initial data \eqref{INI}, we set $(\vrh^0,\vrh^0 \vuh^0) =(\Piq\vr_0, \Piq[\vr_0 \vu_0])$.
The \emph{FV approximation}
$(\vrh^n , \vuh^n) \in \Qh \times \vQh,$ $n=1,\dots, N,$ of the Navier--Stokes system \eqref{PDE}--\eqref{INI} is a solution of the following system of algebraic equations:
\begin{subequations}\label{VFV_S}
\begin{equation}
\intTd{ D_t \vrh^n  \phi_h } -   \intE{  \Fup [\vrh^n ,\vuh^n ]
\jump{\phi_h}   } = 0 \quad \mbox{for all } \ \phi_h \in \Qh,  \label{VFV_SD}
\end{equation}
\begin{equation} \label{VFV_SM}
\begin{aligned}
&\intTd{ D_t  (\vrh^n  \vuh^n ) \cdot \bfphi_h } -   \intE{ \bFup [\vrh^n  \vuh^n ,\vuh^n ]
\cdot \jump{\bfphi_h}   } - \intTd{  p_h^n    \Divh \bfphi_h   }
\\&= - \mu \intTd{ \Gradd \vuh^n    :  \Gradd \bfphi_h  }
- \nu  \intTd{\Divq   \vuh^n   \; \Divq \bfphi_h }
\quad \mbox{for all } \ \bfphi_h \in \vQh,
\end{aligned}
\end{equation}
where $\nu = \frac{d-2}{d} \mu + \lambda.$
\end{subequations}
\end{defi}
\medskip

\subsubsection{Finite difference MAC method}
We proceed by presenting the finite difference MAC scheme that is based on a staggered grid approach.
On the one hand,  the discrete density $\vrh$ and pressure $p_h=p(\vrh)$ are approximated on the primary grid $\mesh$.   On the other hand,  the $\ith$ component of the velocity field $\uih$ is approximated on the $\ith$ dual grid $\Di$.
The MAC scheme reads as follows.
\begin{defi}[MAC scheme]\label{defMAC}
Given the initial data \eqref{INI}, we consider
 $(\vrh^0,\vrh^0 \Piq\vuh^0) =(\Piq\vr_0, \Piq[\vr_0 \vu_0]).$
 The \emph{MAC approximation} of the Navier--Stokes system  \eqref{PDE}--\eqref{INI}  is a sequence
$ (\vrh^n ,\vuh^n  ) \in \Qh \times \vWh,$ $n=1,2,\dots,N,$
which solves  the following system of algebraic equations:
\begin{subequations}\label{MAC_S}
\begin{equation}\label{MAC_SD}
\intTd{ D_t \vrh^n  \phi_h } -   \intE{  \Fup [\vrh^n ,\vuh^n ]\jump{\phi_h}   }  = 0  \ \ \mbox{ for all }\  \phi_h \in \Qh,\\
\end{equation}
\begin{equation} \label{MAC_SM}
\begin{aligned}
&\intTd{ D_t  (\vrh^n   \avuh^n ) \cdot \Ov{\bfphi_h} } -   \intE{ \bUp [\vrh^n   \avuh^n ,\vuh^n ] \cdot \jump{\Ov{\bfphi_h} }   }
 \\& + \mu \intTd{ \GradB \vuh^n    :  \GradB \bfphi_h  } + \nu  \intTd{\Divw   \vuh^n   \; \Divw \bfphi_h }
  - \intTd{  p_h^n    \Divw \bfphi_h   }
 \\ & =
- h^{\eps+1}  \sumi  \sumj  \intTd{      \avs{\auih^n }^{(j)}  (\pddualj \vrh)     \pddualj  \Ov{  \phi_{i,h} }  },
\mbox{ for all } \bfphi_h = (\phi_{1,h}, \ldots, \phi_{d,h})\in \vWh,
\end{aligned}
\end{equation}
where $\nu = \frac{d-2}{d} \mu + \lambda.$
\end{subequations}
\end{defi}
In what follows, we will denote by $\vrh(t), \vuh(t)$  the piecewise constant approximations of $\vrh^n, \vuh^n$, $n=0,1, \dots, N$ on the time interval $[0,T]$, see Section~\ref{ss:time}.
We note that both methods,  the FV method \eqref{VFV_S} as well as  the MAC method \eqref{MAC_S}, preserve the positivity of density and   conserve the mass
\begin{equation}\label{MC}
\vrh(t)>0\  \mbox{ and } \intTd{\vrh(t)} =  M  \quad \mbox{ for all } \; t\in(0,T),
\end{equation}
where
$ M  := \intTd{\vr_0} $ denotes the fluid mass, see e.g. \cite[Lemma 11.2]{FeLMMiSh}.

\subsection{Energy stability}
The essential feature of any numerical scheme is its stability. We now recall the energy stability of both numerical methods introduced above, see \cite[Theorem 11.1 and 14.1]{FeLMMiSh}
\begin{lemma}[Energy estimates]\label{thm_stability}
Let $(\vrh, \vuh)$ be a numerical solution obtained either by the FV scheme \eqref{VFV_S} or by the MAC scheme \eqref{MAC_S} with $\gamma>1$. Then for all $\tau\in(0,T)$, it holds
\begin{equation}\label{ST}
\intTd{ \left( \frac12 \vrh  \abs{\Piq \vuh}^2 + \Hc(\vrh) \right)(\tau) }   + \mu \int_0^\tau \intTd{ {\abs{\Gradh \vuh}^2 } } \dt  + \nu \int_0^\tau \intTd{ \abs{\Divh \vuh }^2 } \dt
\leq   E_0 ,
\end{equation}
where  $ E_0=\intTd{\Big(\frac12 \vr_0 |\vu_0|^2 + \Hc(\vr_0)  \Big)}$ is the initial energy
%% energy is a convex function of rho, m and we do a projection of the initial data on rho, m + Jensen's inequality
%$ E_{0,h}=\intTd{\Big(\frac12 \vrh^0 |\Piq \vu^0_h|^2 + \Hc(\vrh^0)  \Big)} \leq E_0$
and
\begin{equation*}%\label{GDV}
(\Gradh \vuh, \Divh \vuh) = \begin{cases}
(\Gradd \vuh, \Divq \vuh) & \mbox{ for } \vuh \in \vQh \mbox{ in the FV scheme};
\\
(\GradB \vuh, \Divw \vuh) & \mbox{ for } \vuh \in \vWh \mbox{  in the MAC scheme}.
\end{cases}
\end{equation*}
Moreover, there exists $c>0$ which may depend on the fluid mass $ M $ and the initial energy $E_0$ but is independent of the parameters $h$ and $\TS$ such that
\begin{subequations}\label{ests1}
\begin{align}
&
\norm{\vrh \abs{\avuh}^2}_{L^{\infty}L^1}    \leq  c  , \quad
\norm{\vrh}_{L^{\infty}L^\gamma}   \leq  c  , \quad
 \norm{\vrh\avuh}_{L^\infty L^{\frac{2\gamma}{\gamma+1}}}    \leq  c  ,
\label{est_ener}
\\&
\norm{\Divh \vuh}_{L^2L^2}   \leq  c  , \quad
\norm{\Gradh \vuh}_{L^2 L^2}    \leq  c  ,\quad
\norm{\vuh}_{L^{2}L^6}    \leq  c  .
\label{est_u}
%\\&
%\Delta t  \int_0^T \intTd{\vrh(t-\TS) |\Dt \Ov{\vuh}|^2} \dt   \leq  c  ,
%\label{est_dtu}
%\\&
% h \int_0^T \sumfaceint \int_{D_\sigma} \vrh^{up}|\vuh \cdot \vn |\; \abs{\GradD \Ov{\vuh}}^2 \dx\dt   \leq  c  , \label{est_r_ujump}
%\\&
% \int_0^T   \intTd{ \bigg( (h^\eps + h|u_{\sigma}|)  \Hc''(\vr_{h,\dagger}) \abs{\GradD \vrh}^2 \bigg) } \dt     \leq  c  ,\label{est_adf}
\end{align}
\end{subequations}
\end{lemma}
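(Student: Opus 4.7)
The plan is to derive the discrete energy identity \eqref{ST} by the standard discrete energy method that mirrors the continuous calculation, and then to extract the norm bounds \eqref{ests1} as direct consequences of \eqref{ST} and the mass conservation \eqref{MC}. Since the statement coincides with \cite[Theorems 11.1 and 14.1]{FeLMMiSh}, the proof amounts to recollecting the main ingredients; both schemes can be treated in parallel, the only difference being whether the velocity lives on the primary grid (FV) or on the staggered dual grids (MAC).

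The first step is the \emph{kinetic energy balance}. I would test the momentum equation \eqref{VFV_SM} (resp.\ \eqref{MAC_SM}) with $\bfphi_h = \vuh^n$, and combine with the discrete continuity equation \eqref{VFV_SD} (resp.\ \eqref{MAC_SD}) tested with $\phi_h = \tfrac12 |\vuh^n|^2$ (suitably interpolated onto the primary grid for the MAC case). The telescopic identity
\[
2 \vuh^n \cdot D_t(\vrh^n \vuh^n) = D_t(\vrh^n |\vuh^n|^2) + \vrh^{n-1} |\vuh^n - \vuh^{n-1}|^2/\TS + |\vuh^n|^2 D_t \vrh^n
\]
then converts the time-derivative term into $D_t(\tfrac12 \vrh^n |\vuh^n|^2)$ plus a nonnegative remainder that is discarded. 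The upwind convective flux combined with the continuity equation yields, after the jump--average manipulation $\jump{rv} = \avs{r}\jump{v} + \avs{v}\jump{r}$, the further nonnegative dissipation
\[
\tfrac12 \intE{\bigl(|u_\sigma| + 2h^\eps\bigr)\, \avs{\vrh^n}\, \jump{\vuh^n}^2},
\]
which is also discarded. The second step is the \emph{pressure potential balance}: testing \eqref{VFV_SD} with $\phi_h = \Hc'(\vrh^n) = \tfrac{a\gamma}{\gamma-1}(\vrh^n)^{\gamma-1}$ and invoking convexity of $\Hc$ yields $D_t \Hc(\vrh^n) \leq \Hc'(\vrh^n) D_t \vrh^n$ together with another nonnegative upwind dissipation. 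The identity $\vr \Hc'(\vr) - \Hc(\vr) = p(\vr)$ together with the integration-by-parts formulas in Lemma~\ref{lem_ibp} ensures that the pressure term $-\intTd{p_h^n \Divh \vuh^n}$ appearing in the momentum balance cancels exactly with the pressure contribution from the potential balance. Summing over $n = 1, \dots, \lfloor \tau/\TS \rfloor$ and multiplying by $\TS$ yields \eqref{ST}.

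The bounds \eqref{ests1} then follow routinely. The first two estimates in \eqref{est_ener} are immediate from \eqref{ST} since $\Hc(\vr) \approx \vr^\gamma$ for $\vr$ large. The third follows by Hölder's inequality, writing $\vrh |\avuh| = \sqrt{\vrh}\cdot \sqrt{\vrh |\avuh|^2}$ and combining the $L^\infty L^\gamma$-bound on $\vrh$ with the $L^\infty L^1$-bound on $\vrh |\avuh|^2$. The estimates on $\Divh \vuh$ and $\Gradh \vuh$ in \eqref{est_u} are read off directly from the dissipation in \eqref{ST}; the $L^2 L^6$ bound on $\vuh$ then follows from a discrete Sobolev embedding $W^{1,2}_h \hookrightarrow L^6$ on the torus, after controlling the mean of $\vuh$ through the mass conservation \eqref{MC} and the $L^\infty L^{2\gamma/(\gamma+1)}$-bound on $\vrh \avuh$.

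The main technical obstacle lies entirely in the first step: the nontrivial cancellation between the discrete time derivative of $\vrh \vuh$ and the upwind convective flux required to produce a clean nonnegative numerical dissipation. For the MAC scheme this is delicate because $\vrh$ and the components $\uih$ live on different grids, so one must interpolate $\vrh$ onto the dual cells in such a way that a \emph{companion} discrete continuity equation holds on each dual grid; only then does the discrete product rule above close, and only then does the energy method produce the correct pressure-work cancellation. These manipulations are carried out in full in \cite[Chapters 11 and 14]{FeLMMiSh}, which we invoke directly.
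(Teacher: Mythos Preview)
The paper does not prove this lemma at all; it simply cites \cite[Theorems 11.1 and 14.1]{FeLMMiSh} and moves on. Your sketch of the discrete energy method (test momentum with $\vuh^n$, continuity with $\tfrac12|\vuh^n|^2$ and $\Hc'(\vrh^n)$, use convexity and the sign of the upwind/artificial-diffusion remainders, then read off the a priori bounds) is precisely the argument carried out in that reference, and you explicitly invoke it, so your proposal is correct and aligned with the paper's approach.
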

\medskip

\subsection{Consistency formulation}\label{subsec:con}
The next important ingredient of our approach is the consistency formulation of the numerical scheme.
%Analogous to the proof of \cite[Theorem 4.6--4.7]{HS_MAC} we get the following the consistency formulae.
\begin{lemma}[Consistency formulation]\label{thm_CS}
Let $(\vrh, \vuh)$ be either a solution of the FV scheme \eqref{VFV_S} or the MAC scheme \eqref{MAC_S}  with $\Delta t \approx h \in(0,1)$, $\gamma>1$ and $\eps >-1$.

Then for all $\tau \in (0,T)$, $\phi \in %C^1([0,T] \times \Td) \cap
L^\infty(0,T; W^{2,\infty}(\Td)),$
 $\pd_t ^2 \phi \in L^\infty ((0,T) \times \Td)$
 and $\bfphi \in %C^1([0,T] \times {\Td}; \R^d) \cap
 L^\infty(0,T; W^{2,\infty}(\Td;  \R^d))$,
$\pd_t ^2 \bfphi \in L^\infty ((0,T) \times \Td; \R^d)$
   there holds
\begin{subequations}\label{CS}
\begin{equation} \label{CS1}
\left[ \intTd{ \vrh \phi  } \right]_{t=0}^\tau =
\int_0^\tau \intTdB{  \vrh \partial_t \phi + \vrh \Piq \vuh \cdot \Grad \phi } \dt  \;  + e_\vr(\tau, \TS, h, \phi),
\end{equation}
\begin{multline} \label{CS2}
\left[  \intTd{ \vrh \Piq \vuh \cdot \bfphi } \right]_{t=0}^\tau =
\int_0^\tau \intTdB{  \vrh \avuh \cdot \partial_t \bfphi + \vrh \avuh \otimes \avuh  : \Grad \bfphi  + p_h \Div \bfphi } \dt
\\
 -  \mu \int_0^\tau \intTd{  \Gradh \vuh : \Grad \bfphi} \dt
 -  \nu \int_0^\tau \intTd{  \Divh \vuh \; \Div \bfphi} \dt
\; +e_{\vm}(\tau, \TS, h, \bfphi),
\end{multline}
where the consistency errors are bounded as follows:
\begin{equation}\label{CS3}
\begin{aligned}
&\abs{e_\vr(\tau, \TS, h, \phi) } \leq \begin{cases}
  C_\vr \big(\TS  + h + h^{1+\eps} + h^{1+\beta_D}   \big) & \mbox{ for the FV method }
\\    C_\vr \big(\TS + h^{1+\eps}  + h^{1+\beta_D}  \big) & \mbox{ for the MAC method }
\end{cases}
\\
&\abs{e_{\vm}(\tau, \TS, h, \bfphi) } \leq
\begin{cases}
C_{\bm{m}}  \big( \sqrt{\TS} + h + h^{1+\eps}  + h^{1+\beta_M}     \big)& \mbox{ for the FV method }
\\    C_{\bm{m}}   \big( \sqrt{\TS} +h + h^{1+\eps}  + h^{1+\beta_M} + h^{1+\eps + \beta_D }   \big) & \mbox{ for the MAC method.}
\end{cases}
\\
\end{aligned}
\end{equation}
Here,  the constant $C_\vr$ depends on
$$  \mbox{ the initial energy } E_0, T, \mbox { and } \norm{\phi}_{ L^\infty(0,T; W^{2,\infty}(\Td)) },\,  \norm{\pd_t^2 \phi}_{L^\infty((0,T) \times \Td) },
$$
and $C_{\bm{m}}$ depends on
$$  E_0, T,   \norm{\bfphi}_{ L^\infty(0,T; W^{2,\infty}(\Td; \R^d) },\,
\norm{\pd_t^2 \bfphi}_{L^\infty((0,T) \times \Td; \R^d) }
.$$
Further, the exponents $\beta_D$ and $\beta_M$ are given by
\begin{equation}\label{betas}
\begin{aligned}
\left. \begin{array}{l}
\beta_D =
\begin{cases}
 \max\left\{ - \frac{3\eps+3+d}{6\gamma}, \frac{\gamma-2}{2\gamma}d \right\}, & \mbox{if } \gamma \in(1,2), \\
0, &\mbox{if } \gamma \geq 2,
\end{cases}
%\\
%\beta_R= \begin{cases}
% \max\left\{- \frac{3\eps+3+d}{6\gamma}, % \frac{\gamma-2}{2\gamma}d,
% \frac{5\gamma -6}{6\gamma}d \right\}, & \mbox{if } \gamma \in(1,\frac65), \\
%0, &\mbox{if } \gamma \geq \frac{6}{5},
%\end{cases}
\end{array}
\right.
\quad
\beta_M =
\begin{cases}
%\max\left\{ - \frac{3\eps+3+d}{6\gamma}, \frac{\gamma-3}{3\gamma}d  \right\},
- \frac{3\eps+3+d}{6\gamma},
& \mbox{ if } \gamma \in(1,2), \\
\frac{\gamma-3}{3\gamma}d, &\mbox{ if } \gamma \in [2,3), \\
0, &\mbox{ if } \gamma \geq 3 \mbox{ for } d=3,\\
0, &\mbox{ if } \gamma > 2 \mbox{ for } d=2.
\end{cases}
\end{aligned}
\end{equation}

\end{subequations}
\end{lemma}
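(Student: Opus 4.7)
The plan is to test the discrete mass balance \eqref{VFV_SD}/\eqref{MAC_SD} against the projection $\phi_h = \Piq\phi$, and the discrete momentum balance \eqref{VFV_SM}/\eqref{MAC_SM} against the appropriate projection of $\bfphi$ (namely $\Piq\bfphi$ for FV and $\Piv\bfphi$ for MAC), then rewrite each term into its continuous counterpart plus a controlled remainder. For the time derivative I would use the discrete Leibniz identity
\[
\TS\sum_n \intTd{D_t\vrh^n\,\phi_h^n}
= \left[\intTd{\vrh\phi_h}\right]_0^\tau
- \TS\sum_n\intTd{\vrh^{n-1}\, D_t\phi_h^n},
\]
and bound the difference $D_t\phi_h - \partial_t\phi$ by $\TS\|\partial_t^2\phi\|_{L^\infty} + h\|\Grad\partial_t\phi\|_{L^\infty}$; combined with \eqref{MC} this yields the $\TS$ term and, for FV, the extra $h$ contribution in \eqref{CS3} coming from the $\Piq$ projection on the test function. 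On MAC the momentum time term also requires the commutator between $\Piv$ and $\Piq$, which is controlled through Lemma~\ref{L22}.

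For the convective term I would exploit the algebraic decomposition
\[
\Up[r_h,\vuh]_\sigma = \avs{r_h}_\sigma u_\sigma - \tfrac{1}{2}|u_\sigma|\jump{r_h}_\sigma,
\]
and then use the integration-by-parts formulas of Lemma~\ref{lem_ibp} together with Lemmas~\ref{L22}--\ref{L24} to rewrite the centred piece of the mass upwind flux as $\int\vrh\avuh\cdot\Gradh\phi_h$. Replacing $\Gradh\phi_h$ by $\Grad\phi$ through Lemma~\ref{NC} costs $O(h)\,\|\vrh\avuh\|_{L^\infty L^{2\gamma/(\gamma+1)}}\|\phi\|_{W^{2,\infty}}$ and is controlled by \eqref{est_ener}. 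Two numerical-diffusion remainders then remain: the explicit term $h^\eps\intE\jump{\vrh}\jump{\phi_h}$, estimated by Cauchy--Schwarz against the numerical dissipation to give $h^{1+\eps}$, and the upwind term $\tfrac{1}{2}\intE|u_\sigma|\jump{\vrh}\jump{\phi_h}$, whose estimation yields the exponent $\beta_D$ in \eqref{betas}.

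The momentum consistency \eqref{CS2} is treated in parallel. The viscous contribution uses \eqref{IBP5}--\eqref{IBP7} together with Lemma~\ref{NC} to replace the discrete gradient of $\bfphi_h$ by $\Grad\bfphi$ at a cost of $O(h)\,\|\Gradh\vuh\|_{L^2L^2}\|\bfphi\|_{W^{2,\infty}}$ controlled by \eqref{est_u}; the pressure term $\int p_h\Divh\bfphi_h$ is compared with $\int p_h\Div\bfphi$ using \eqref{NC3}--\eqref{NC4} and the uniform bound $\|p_h\|_{L^\infty L^1}$ from \eqref{est_ener}, producing an $O(h)$ error. The convective part of the momentum equation repeats the density argument with $\vrh\avuh$ in place of $\vrh$, yielding the exponent $\beta_M$ since the relevant jump involves the product $\vrh\avuh$. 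On the MAC side the extra stabilising term $h^{\eps+1}\sum_{i,j}\int\avs{\auih}^{(j)}(\pddualj\vrh)\,\pddualj\Ov{\phi_{i,h}}$ of \eqref{MAC_SM} is bounded by Cauchy--Schwarz against the density-jump dissipation and against the $L^2L^6$ estimate of $\vuh$ from \eqref{est_u}, producing the $h^{1+\eps+\beta_D}$ contribution.

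The genuinely delicate step lies in the regime $\gamma\in(1,2)$, where the energy bound \eqref{est_ener} only delivers $\vrh\in L^\infty L^\gamma$ and $\vrh\avuh\in L^\infty L^{2\gamma/(\gamma+1)}$; in particular neither the pressure nor the momentum convective flux is uniformly $L^2$-integrable. To extract the explicit exponents in \eqref{betas} I would split the critical sum
$\sum_\sigma|\sigma|\,|u_\sigma|\,|\jump{\vrh}|\,|\jump{\phi_h}|$
by Hölder's inequality with three factors and carefully chosen exponents depending on $\gamma$ and $d$, combining the $L^\infty L^\gamma$ bound on $\vrh$, the $L^2L^6$ bound on $\vuh$ from \eqref{est_u}, and a dissipative bound on the density jumps that follows directly from \eqref{ST} for $\gamma\geq 2$ and from an improved pressure estimate combined with a discrete Poincaré/embedding inequality for $\gamma\in(1,2)$. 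Optimising these Hölder exponents as a function of $\gamma$, $\eps$ and $d$, and balancing all consistency remainders of \eqref{CS3}, produces the pair $(\beta_D,\beta_M)$ in \eqref{betas}; this optimisation is the single most delicate calculation of the proof.
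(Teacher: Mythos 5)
Your overall strategy --- test the schemes with projected test functions, split the upwind flux into a centred part plus dissipation, convert discrete to continuous derivatives via Lemmas~\ref{lem_ibp}--\ref{NC}, and extract $\beta_D,\beta_M$ by a H\"older optimisation against the energy and dissipation bounds --- is exactly the route of \cite[Theorems~11.2 and 14.2]{FeLMMiSh}, which the paper simply cites for all the spatial consistency errors. What the paper actually proves here consists only of the two ingredients that are new relative to that reference: (i) a refined treatment of the time-derivative term (the decomposition into $I_1,I_2,I_3$ in \eqref{TAU2}) requiring only $\pd_t^2\phi\in L^\infty$, and (ii) the passage from the grid endpoint $t^{n+1}$ to an arbitrary $\tau\in[t^n,t^{n+1})$. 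Your proposal covers (i) in a comparable way, but two points need repair.

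First, you never account for (ii), and this is precisely where the $\sqrt{\TS}$ in $e_{\vm}$ comes from: the discrete Leibniz/Abel identity produces $[\,\cdot\,]_0^{t^{n+1}}$, while the claim is for $[\,\cdot\,]_0^{\tau}$, so one must bound $\int_\tau^{t^{n+1}}$ of the right-hand side. For the density equation all integrands are in $L^\infty L^1$, giving $O(\TS)$, but the viscous terms $\Gradh\vuh$, $\Divh\vuh$ are controlled only in $L^2$ in time by \eqref{est_u}, so Cauchy--Schwarz over $[\tau,t^{n+1}]$ yields only $O(\sqrt{\TS})$; without this step your argument neither produces nor explains the $\sqrt{\TS}$ rate in \eqref{CS3}. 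Second, your bound for $D_t\phi_h-\pd_t\phi$ invokes $\|\Grad\pd_t\phi\|_{L^\infty}$, a mixed derivative that the stated hypotheses do not control, and the attribution of the extra $O(h)$ in the FV density error to the projection of the test function in the time term is incorrect: since $D_t\vrh$ is piecewise constant in space, $\intTd{D_t\vrh\,\Piq\phi}=\intTd{D_t\vrh\,\phi}$ exactly, so no spatial projection error enters there; the $O(h)$ term is of spatial origin (the convective flux), and its absence in the MAC density bound reflects the staggered placement of $u_\sigma$ on the faces. Finally, the ``dissipative bound on the density jumps'' you invoke is not part of Lemma~\ref{thm_stability} as stated; to close the $\beta_D$, $\beta_M$ estimates you would need the full discrete energy balance including the upwind and artificial-diffusion dissipation terms, as in \cite{FeLMMiSh}.
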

\begin{remark}
Consistency formulation for the FV and MAC method was introduced in \cite[Theorem~11.2]{FeLMMiSh} and \cite[Theorem~14.2]{FeLMMiSh}, respectively.
Instead of an abstract consistency error identified in \cite{FeLMMiSh}, Lemma~\ref{thm_CS} provides  an explicit bound in terms of the numerical step and regularity of the associated test function.
Moreover, we improve the result of \cite{FeLMMiSh} by requiring less regularity of the test functions.
%%\rdele{This can be done due to the improved estimate on the time derivative terms.}
\end{remark}
%which will be set as the classical solution in the error analysis, see Section~\ref{error}. Though this improvement is not important for the convergence analysis of the numerical solutions, it is important for the error estimates analyzed in this paper. The main reason is that we don't want to assume the existence of a classical solution in the $C^2$-class, but directly using the existence theory from Proposition~\ref{prop1}.
%Next we show the proof of Lemma~\ref{thm_CS} for the FV scheme. As the proof for the MAC scheme is analogous we leave it as an exercise to the reader.
\begin{proof}[Proof of Lemma~\ref{thm_CS}]
The consistency errors arising from the time derivative term can be evaluated in the following way. First, by a direct calculation, we obtain

\begin{equation}\label{TAU2}
\begin{aligned}
&\int_0^{t^{n+1}} \intTd{ D_t r_h(t) \Piq \varphi(t) } \dt =
\int_0^{t^{n+1}} \intTd{ \frac{r_h(t)- r_h(t-\Delta t) } {\Delta t} \varphi(t) } \dt
\\ &
= \frac{1}{\TS}\int_0^{t^{n+1}} \intTd{ r_h(t) \varphi(t) } \dt -
\frac{1}{\Delta t}\int_{-\Delta t}^{t^n} \intTd{ r_h(t) \varphi(t+ \Delta t) } \dt
\\ &=  - \int_0^{t^{n+1}} \intTd{ r_h(t) D_t \varphi(t+\TS) } \dt
+ \frac{1}{\Delta t}\int_{ t^n }^{t^{n+1}} \intTd{ r_h(t) \varphi(t +\Delta t) } \dt
\\& \quad  - \frac{1}{\Delta t}\int_{-\Delta t}^{0} \intTd{ r_h(t) \varphi(t+ \Delta t) } \dt
\\ & = - \int_0^{t^{n+1}} \intTd{ r_h(t) D_t \varphi (t+\TS) } \dt
+ \frac{1}{\Delta t}\int_{t^n}^{t^{n+1}} \intTd{ r_h(t) \varphi(t) } \dt
- \frac{1}{\TS} \int_0^{\TS} \intTd{ r_h^0 \varphi (t) } \dt
\\ &= - \int_0^{t^{n+1}} \intTd{ r_h(t) \pd_t \varphi (t) } \dt + \intTd{  \underbrace{r_h(\tau)}_{= r_h^n \, \forall \tau \in [t^n,t^{n+1}) } \varphi (\tau)} - \intTd{ r_h^0 \varphi (0)}  + I_1 + I_2 +I_3,
\end{aligned}
\end{equation}
for any $\tau \in [t_n, t_{n+1})$, $n=1,\dots, N_T$,
where
\begin{align*}
I_1 &= \intTd{  r_h^0 \frac{1}{\TS} \int_0^{\TS} \left( \varphi (0) - \varphi(t)\right) \dt }
	\aleq \TS \norm{\pdt \varphi}_{L^\infty L^\infty}  \norm{r_h^0}_{L^1},
\\
I_2 &= \intTd{  \frac{1}{\TS} \int_{t^n}^{t^{n+1}}   \big( r_h(t) \varphi (t+\Delta t) - r_h(\tau)\varphi(\tau)\big) \dt }
	\\& = \intTd{  \frac{1}{\TS} \int_{t^n}^{t^{n+1}}   r_h^n  \big( \varphi (t+\Delta t) - \varphi(\tau) \big) \dt }
	\aleq \TS  \norm{r_h^n}_{L^1} \norm{\pdt \varphi}_{L^\infty L^\infty},
\\
I_3 &= \int_0^{t^{n+1}} \intTd{ r_h(t) \left(\pd_t \varphi (t) - D_t\varphi(t+\TS)   \right)} \dt
	 =   \intTd{  \sum_{k=0}^{n} \int_{t^k}^{t^{k+1}}   r_h(t) \big(\pd_t \varphi (t) - D_t\varphi(t+\TS)   \big)  \dt}
	\\& \leq   \TS \norm{ \pd_{t}^2\varphi   }_{L^\infty L^\infty}  \norm{r_h}_{L^\infty L^1}.
\end{align*}
Collecting the above estimates we obtain from \eqref{TAU2} that
\begin{equation}\label{CST}
 \left[\intTd{ r_h \varphi}\right]_0^{\tau} %- \intTd{ r_h^0 \varphi (0)}
-  \int_0^{t^{n+1}} \intTdB{ D_t r_h(t) \Piq \varphi(t) + r_h(t) \pd_t \varphi (t) } \dt
\leq   \TS \norm{   \pd_{t}^2\varphi  }_{L^\infty L^\infty}  \norm{r_h}_{L^\infty L^1},
\end{equation}
whenever $\tau \in [t_n, t_{n+1})$,
where $r_h$ stands for $\vrh$ or $\vrh \vuh.$ % u_{i,h},$ $ i=1,\ldots,d$.

Analogously as in the proofs of \cite[Theorem~11.2]{FeLMMiSh} and \cite[Theorem~14.2]{FeLMMiSh}, we obtain
\begin{subequations}
\begin{equation} \label{CSTA}
\left[ \intTd{ \vrh \phi  } \right]_{t=0}^\tau =
 \int_0^{t^{n+1}} \intTdB{  \vrh \partial_t \phi + \vrh \Piq \vuh \cdot \Grad \phi } \dt \;  + e_\vr(\tau, \TS, h, \phi),
\end{equation}
\begin{equation} \label{CSTB}
\begin{aligned}
 \left[ \intTd{ \vrh \Piq\vuh \cdot \bfphi  } \right]_{t=0}^\tau
&= \int_0^{t^{n+1}} \intTdB{  \vrh \Piq\vuh \cdot \partial_t \bfphi + \vrh \Piq \vuh \otimes \Piq \vuh : \Grad \bfphi } \dt
\\& + \int_0^{t^{n+1}} \intTd{ \big( p_h \mathbb{I}
 -  \mu  \Gradh \vuh  - \nu   \Divh \vuh \big): \Grad \bfphi} \dt
 \; +e_{\vm}^* (\tau, \TS, h, \phi),
\end{aligned}
\end{equation}
\end{subequations}
where $e_{\vm}^*$ is controlled by
\[
\abs{e_{\vm}^*(\tau, \TS, h, \bfphi) } \leq
\begin{cases}
C_{\bm{m}}  \big( \TS + h + h^{1+\eps}  + h^{1+\beta_M}     \big)& \mbox{ for the FV method ,}
\\    C_{\bm{m}}   \big( \TS +h + h^{1+\eps}  + h^{1+\beta_M} + h^{1+\eps + \beta_D }   \big) & \mbox{ for the MAC method.}
\end{cases}
\]
In order to derive \eqref{CS1} it suffices to realize that the time integral from $\tau$ to $t^{n+1}$ at the right hand side of \eqref{CSTA} is of order $\mathcal{O}(\Delta t)$ . Indeed
\begin{equation}\label{CSTR}
\begin{aligned}
 &\int_\tau^{t^{n+1}} \intTd{ \Big( \vrh \partial_t \phi + \vrh \Piq \vuh \cdot \Grad \phi \Big) } \dt
 \\&
 \leq \big( \norm{\pdt \phi}_{L^\infty L^\infty} \norm{\vrh^n}_{L^\infty L^1} + \norm{\Grad \phi}_{L^\infty L^\infty}\norm{  \vrh^n \Piq \vuh^n}_{L^\infty L^1} \big)  \int_\tau^{t^{n+1}} 1 \dt
 \aleq \TS.
\end{aligned}
\end{equation}
Combining \eqref{CSTA} and \eqref{CSTR} yields \eqref{CS1}.

Similarly, to get \eqref{CS2} we need the following estimate
\begin{align*}
&
\int_\tau^{t^{n+1}} \intTd{ \Big( \vrh \avuh \cdot \partial_t \bfphi + \big(\vrh \avuh \otimes \avuh   + p_h \mathbb{I}
 -  \mu  \Gradh \vuh  - \nu   \Divh \vuh \big): \Grad \bfphi \Big) } \dt
 \\& \leq
\int_\tau^{t^{n+1}} \norm{ \vrh^n\avuh^n}_{L^1(\tor)} \norm{\pdt \bfphi}_{L^\infty(\tor) } \dt
+\int_\tau^{t^{n+1}} \norm{  \vrh^n \avuh^n \otimes \avuh^n   + p_h^n \mathbb{I}    }_{L^1(\tor)} \norm{\Grad \bfphi}_{L^\infty(\tor) }  \dt
 \\& +\int_\tau^{t^{n+1}} \norm{   \mu  \Gradh \vuh^n  + \nu   \Divh \vuh^n  }_{L^1(\tor)} \norm{\Grad \bfphi}_{L^\infty(\tor) }  \dt
  \\& \leq
\TS \norm{ \vrh^n\avuh^n}_{L^\infty L^1} \norm{\pdt \bfphi}_{L^\infty L^\infty }
+\TS  \norm{  \vrh^n \avuh^n \otimes \avuh^n   + p_h^n \   }_{L^\infty L^1} \norm{\Grad \bfphi}_{L^\infty L^\infty }
 \\& + \norm{\Grad \bfphi}_{ L^\infty  L^\infty }  \norm{   \mu  \Gradh \vuh^n  + \nu   \Divh \vuh^n  }_{L^2 L^1}  \left(\int_\tau^{t^{n+1}}  1^2\dt \right)^{1/2}  \aleq  \sqrt{\TS}.
\end{align*}
Substituting the above estimate into \eqref{CSTB} we obtain \eqref{CS2}, which completes the proof.
\end{proof}

%\begin{remark}
% Instead of the integration from time $t=0$ to $\tau$, we should integrate until $t=t^{n+1}$ including the special case of $\tau =t^n$.
%\end{remark}

%%%%%%%%%%%%%%%%%%%%%%%%%%%%%%%%%%%%%%%%%%%%%%%%%%%%%%%%%%%%%%

\begin{lemma}[Consistency formulation for a bounded numerical solution]\label{thm_CSB}

Let the assumptions of \linebreak Lemma~\ref{thm_CS} hold. Moreover, let $\vrh$ and $\vuh$ be uniformly bounded, i.e., there exist positive constants $\Ov{\vr}$ and $\Ov{u}$ such that
\begin{equation}\label{bdd}
\vrh \leq \Ov{\vr} \mbox{ and } \abs{\vuh} \leq \Ov{u}.
\end{equation}

 Then for all $\tau \in (0,T)$,  $\phi \in  L^\infty(0,T; W^{2,\infty}(\Td))$,
 $\pd_t ^2 \phi \in L^\infty ((0,T) \times \Td)$
 and $\bfphi \in L^\infty(0,T; W^{2,\infty}(\Td;  \R^d)) \cap L^2(0,T; W^{3,2}(\tor;  \R^d))$, $\pd_t ^2 \bfphi \in L^\infty ((0,T) \times \Td; \R^d)$,
  there holds
\begin{subequations}\label{CSB}
\begin{equation} \label{CSB1}
\left[ \intTd{ \vrh \phi  } \right]_{t=0}^\tau =  \int_0^\tau
 \intTdB{  \vrh \partial_t \phi + \vrh \Piq \vuh \cdot \Grad \phi  } \dt   \;  + e_\vr(\tau, \TS, h, \phi),
\end{equation}
\begin{multline} \label{CSB2}
\left[  \intTd{ \vrh \Piq \vuh \cdot \bfphi } \right]_{t=0}^\tau =
\int_0^\tau  \intTdB{  \vrh \avuh \cdot \partial_t \bfphi + \vrh \avuh \otimes \avuh  : \Grad \bfphi  + p_h \Div \bfphi } \dt
\\
 +   \int_0^\tau \intTd{   \vuh \cdot ( \mu \Lap \bfphi  +   \nu  \Grad \Div \bfphi )} \dt
\; +e_{\vm}(\tau, \TS, h, \bfphi),
\end{multline}
where the consistency errors can be bounded as follows
\begin{equation}\label{CSB3}
\begin{aligned}
\abs{e_\vr(\tau, \TS, h, \phi) } \leq   C_\vr (\TS + h  ),
\quad \abs{e_{\vm}(\tau, \TS, h, \bfphi) } \leq C_{\bm{m}}  ( \TS + h  )
\end{aligned}
\end{equation}
Here,  the constant $C_\vr$ depends on
$$ \Ov{\vr},  \Ov{u},  E_0, T,\norm{\phi}_{ L^\infty(0,T; W^{2,\infty}(\Td)) },\,
 \norm{\pd_t^2 \phi}_{L^\infty((0,T) \times \Td) },
$$
and $C_{\bm{m}}$ depends on
$$ \Ov{\vr},\Ov{u}, E_0, T,  \norm{\bfphi}_{ L^\infty(0,T; W^{2,\infty}(\tor; \R^d)) },\,   \norm{\bfphi}_{ L^2(0,T;W^{3,2}(\tor; \R^d)) }, \norm{\pd_t^2 \bfphi}_{L^\infty((0,T) \times \tor; \R^d) } .$$
\end{subequations}
\end{lemma}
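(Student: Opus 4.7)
The plan is to start from Lemma~\ref{thm_CS}, whose consistency formulations \eqref{CS1}--\eqref{CS2} already hold without the pointwise bounds \eqref{bdd}, and to push through three separate refinements. First, under $\vrh\leq\Ov{\vr}$ and $|\vuh|\leq\Ov{u}$, the exponents $\beta_D,\beta_M$ and the diffusive-upwind $h^{1+\eps}$ contributions in \eqref{CS3} simplify to $\mathcal{O}(h)$. Second, the discrete viscous pairing $\mu\Gradh\vuh:\Grad\bfphi + \nu\Divh\vuh\,\Div\bfphi$ must be transferred onto the test function, producing the form $\vuh\cdot(\mu\Lap\bfphi+\nu\Grad\Div\bfphi)$ that appears in \eqref{CSB2}. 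Third, this rewriting will allow me to upgrade the $\sqrt{\TS}$ remainder from the momentum equation of Lemma~\ref{thm_CS} to $\TS$. All three refinements rely on \eqref{bdd}, the energy estimates of Lemma~\ref{thm_stability}, and the integration-by-parts and projection-error toolkits of Lemmas~\ref{L22}--\ref{NC}.

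For the density equation \eqref{CSB1} the identity is already \eqref{CS1}; only the error bound needs to be tightened. The exponent $\beta_D$ in Lemma~\ref{thm_CS} was determined by moment inequalities on $\vrh$ which saturate in the $L^\infty$ regime, placing us in the case $\gamma\geq 2$ of \eqref{betas} and forcing $\beta_D=0$. The diffusive-upwind residual $h^{1+\eps}\sum_\sigma|\sigma|\,\jump{\vrh}_\sigma\jump{\phi}_\sigma$ is controlled by $|\jump{\vrh}_\sigma|\leq 2\Ov{\vr}$ and $|\jump{\phi}_\sigma|\aleq h\|\Grad\phi\|_{L^\infty}$, after which it collapses into the $\mathcal{O}(h)$ bucket. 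Together these reduce the density consistency bound to $C_\vr(\TS+h)$.

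For the momentum equation the key step is the transfer of the viscous derivatives onto $\bfphi$. For the FV scheme I would combine \eqref{IBP7} and \eqref{IBP9} of Lemma~\ref{L24} with the projection-error estimates \eqref{NC2}--\eqref{NC4} of Lemma~\ref{NC} to obtain
\[
\mu\intTd{\Gradd\vuh:\Grad\bfphi} + \nu\intTd{\Divq\vuh\,\Div\bfphi}
= -\intTd{\vuh\cdot\bigl(\mu\Lap\bfphi+\nu\Grad\Div\bfphi\bigr)} + R(t),
\]
with $|R(t)|\leq ch\,\|\vuh\|_{L^2}\|\bfphi\|_{W^{3,2}}$; the analogous identity for the MAC scheme uses \eqref{IBP5} and \eqref{IBP6}. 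Integrating in time and invoking $\|\vuh\|_{L^2L^2}\aleq 1$ from Lemma~\ref{thm_stability} together with the assumed $\bfphi\in L^2(0,T;W^{3,2})$ yields $\int_0^\tau|R|\,\dt=\mathcal{O}(h)$; this is precisely the reason for the extra regularity requirement on $\bfphi$. The $\sqrt{\TS}$ loss in the proof of Lemma~\ref{thm_CS} came from H\"older in time on the short-interval integral of $\Gradh\vuh:\Grad\bfphi$ using only the $L^2(0,T;L^2)$ bound on $\Gradh\vuh$; after the rewriting, the same short-interval integral becomes $\int_\tau^{t^{n+1}}\intTd{\vuh\cdot(\mu\Lap\bfphi+\nu\Grad\Div\bfphi)}\,\dt$, which I bound directly by $\Ov{u}\,\|\mu\Lap\bfphi+\nu\Grad\Div\bfphi\|_{L^\infty L^1}\cdot\TS = \mathcal{O}(\TS)$. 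The remaining $h^{1+\beta_M}$ and $h^{1+\eps+\beta_D}$ residuals reduce to $\mathcal{O}(h)$ by the same $L^\infty$ argument applied to the density.

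The delicate step will be the discrete integration by parts on the MAC staggered grid: the velocity components $u_{i,h}$ live on distinct dual meshes $\Di$, so transferring derivatives off $\vuh$ relies on the full staggered formula \eqref{IBP5}, and each transfer leaves a commutator residual of the form $\Grad\bfphi-\Piv\Grad\bfphi$ or $\Lap\bfphi-\Divw\Gradd\Piq\bfphi$ that must be controlled in $L^2$ via Lemma~\ref{NC}. Arranging these so that only the $L^2$-energy norm of $\vuh$ and the $L^2(0,T;W^{3,2})$-norm of $\bfphi$ appear in the final bound---and not, for instance, an unbounded discrete second derivative of $\vuh$---is the bookkeeping on which the improved $\mathcal{O}(\TS+h)$ rate rests and what makes the argument uniform across the full range $\gamma>1$.
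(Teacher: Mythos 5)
Your proposal is correct and follows essentially the same route as the paper's proof: the inviscid flux and artificial-diffusion errors are re-estimated using the $L^\infty$ bounds on $(\vrh,\vuh)$ (the paper phrases this as $|e_1|\aleq h\norm{\vrh}_{L^2L^2}$, $|e_2|\aleq h\norm{\vrh\vuh}_{L^2L^2}$ and then invokes \eqref{bdd}), the viscous terms are moved onto the test function with the commutator estimates \eqref{NC4} producing the $\mathcal{O}(h)\,\norm{\bfphi}_{L^2W^{3,2}}$ residual (the paper's $e_3$), and the short-interval tail is bounded directly by $\Ov{u}\,\TS$ instead of by H\"older in time (the paper's $e_4$), which is exactly how the $\sqrt{\TS}$ is upgraded to $\TS$. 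The only cosmetic difference is that you argue the improvement of $\beta_D,\beta_M$ via the case structure of \eqref{betas}, while the paper simply re-derives the flux error bounds in terms of $\norm{\vrh}_{L^2L^2}$ and $\norm{\vrh\vuh}_{L^2L^2}$; the substance is the same.
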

\begin{proof}
We will present the proof for the FV method, the proof for the MAC method is analogous.
First, we denote the errors of the inviscid fluxes as
\begin{equation}\label{e1}
e_1 =   \int_0^{t^{n+1}} \intTd{  \vrh \Piq \vuh \cdot \Grad \phi } \dt
 -  \int_0^{t^{n+1}}  \intE{  \Fup [\vrh ,\vuh ] \jump{ \Piq \phi }   }   \dt ,
\end{equation}
\begin{equation}\label{e2}
\begin{aligned}
 e_2=&  \int_0^{t^{n+1}}    \intTd{   \vrh \avuh \otimes \avuh  : \Grad \bfphi   } \dt
- \int_0^{t^{n+1}}    \intE{ \bFup [\vrh  \vuh ,\vuh ]  \cdot \jump{\Piq \bfphi}   } \dt
\\& + \int_0^{t^{n+1}}    \intTd{  p_h \Div \bfphi -  p_h    \Divh \Piq \bfphi  } \dt.
\end{aligned}
\end{equation}
Analogously as in the proof of \cite[Theorem 11.3]{FeLMMiSh} we get
\begin{equation*}
 |e_1| \leq c(\norm{\phi}_{L^\infty W^{2, \infty}}) h \norm{\vrh}_{L^2L^2}\ \
\mbox{ and } \ \  |e_2| \leq c(\norm{\bfphi}_{L^\infty W^{2, \infty}}) h \norm{\vrh \vuh}_{L^2L^2} .
\end{equation*}
In view of assumption \eqref{bdd} the errors $e_1$ and $e_2$ are controlled by
\begin{equation}\label{e12}
\begin{aligned}
& |e_1| \leq c(\norm{\phi}_{L^\infty W^{2, \infty}}) h \norm{\vrh}_{L^2L^2} \leq c(\norm{\phi}_{L^\infty W^{2, \infty}}, \Ov{\vr}) h ,
\\&  |e_2| \leq c(\norm{\bfphi}_{L^\infty W^{2, \infty}}) h \norm{\vrh \vuh}_{L^2L^2} \leq c(\norm{\bfphi}_{L^\infty W^{2, \infty}}, \Ov{\vr}, \Ov{u}) h.
\end{aligned}
\end{equation}
Now, summing up \eqref{e1} and  \eqref{CST} with $r_h =\vrh$, and recalling the estimates \eqref{CSTR} and \eqref{e12} implies \eqref{CSB1}.
Moreover, summing up \eqref{e2} and \eqref{CST} with $r_h =\vrh \vuh$ we get

\begin{equation}\label{em}
\begin{aligned}
 \left[ \intTd{\vrh  \vuh  \cdot \bfphi} \right]_0^{\tau} = &
\int_0^\tau \intTd{  \vrh \vuh \cdot \partial_t \bfphi + \big(\vrh \vuh \otimes \vuh   + p_h \mathbb{I}  \big): \Grad \bfphi
 +    \vuh \cdot (\mu \Lap \bfphi + \nu   \Grad \Div \bfphi)  } \dt
 \\& + e_2 +e_3 +e_4,
\end{aligned}
\end{equation}
where $e_2$ is given in \eqref{e2}. The error terms $e_3$ and $e_4$ can be estimated in the following way
\begin{align*}
|e_3| & =\left| -  \int_0^{t^{n+1}} \intTd{  \vuh \cdot( \mu \Lap \bfphi + \nu \Grad \Div \bfphi)
+ \big( \mu \Gradd \vuh :  \Gradd \Piq \bfphi + \nu \Divh \vuh \Divh \Piq \bfphi \big) }\right|
\\
&= \left| \int_0^{t^{n+1}} \intTd{ \mu \vuh \cdot  (\Divw \Gradd \Piq \bfphi - \Lap \bfphi ) + \nu \vuh \cdot ( \Gradq \Divh \Piq \bfphi  - \Grad \Div \bfphi ) }\right|
\\& \leq c(\norm{\bfphi}_{L^2W^{3,2}}, \Ov{u} ) h,
\\
|e_4 | &=
\left|\int_\tau^{t^{n+1}} \intTd{  \vrh \avuh \cdot \partial_t \bfphi + \big(\vrh \avuh \otimes \avuh   + p_h \mathbb{I}\big): \Grad \bfphi
+ \vuh \cdot (\mu \Lap \bfphi + \nu   \Grad \Div \bfphi)
} \dt\right|
\\&
\leq \TS \norm{\bfphi}_{C^1}\left(\norm{\vrh\Piq \vuh}_{L^\infty L^1} +\norm{\vrh\abs{\Piq \vuh}^2 }_{L^\infty L^1} + \norm{p_h}_{L^\infty L^1}\right) +
\Ov{u} \norm{\bfphi}_{L^\infty W^{2,\infty}} \int_\tau^{t^{n+1}} \dt
\\& \leq c(\norm{\bfphi}_{L^\infty W^{2,\infty}},  \norm{\bfphi}_{C^1}, \Ov{\vr},\Ov{u}) \TS.
\end{align*}
Consequently, collecting the estimates of $e_2$, $e_3$ and $e_4$ we observe that \eqref{CSB2} follows from \eqref{em}, which completes the proof.
\end{proof}

%%%%%%%%%%%%%%%%%%%%%%%%%%%%%%%%%%%%%%%%%%%%%%%%%%%%%%%%%%%%%%
%%%%%%%%%%%%%%%%%%%%%%%%%%%%%%%%%%%%%%%%%%%%%%%%%%%%%%%%%%%%%%
\section{Error estimates}
\label{error}

 This section is the heart of the paper. We prove the main result -- the convergence rates for the FV \eqref{VFV_S}
 and MAC \eqref{MAC_S} schemes. If, in addition, the numerical solutions are uniformly bounded, the convergence rates can be improved to the first order.

\begin{thm}[Convergence rates]\label{thm_EE}
Let $\gamma > 1$ and the initial data $(\vr_0, \vu_0)$ satisfy
$$
\vr_0 \in W^{k,2}({\tor}),\ \vr_0 > 0 \mbox{ in } \tor, \qquad \vu_0 \in  W^{k,2}(\tor; \R^d), \quad
k \geq 6.
$$
Suppose that the Navier--Stokes system \eqref{PDE} admits a classical solution $(\vr,\vu)$ defined on $[0,T] \times \tor$, with the initial data $(\vr_0, \vu_0).$
Further, let $(\vrh, \vuh)$ be a numerical solution obtained either by the FV scheme \eqref{VFV_S} or by the MAC scheme \eqref{MAC_S} emanating from the projected initial data $(\vrh^0, \vuh^0)$.

Then  there exists a positive number
$$ c=c( T, \|(\vr_0, \vu_0)\|_{W^{k,2}(\Td; \R^{d+1})},%\underline{r}, \inf \vr_0 \overline{r}, \overline{U}, %|p'|_{C^1([\underline{r},\overline{r}])},
%\norm{(r, \vU)}_{L^\infty (0,T; W^{2,\infty}(\tor);\R^{d+1})}, \norm{(r, \vU)}_{C^1([0,T]\times \tor)}),
\inf \vr_0, \norm{(\vr, \vu)}_{C([0,T]\times \Td; \R^{d+1})})$$
such that
\begin{equation}\label{RATE}
\begin{aligned}
&\sup_{0\leq t \leq T}\frakE(\vr_h,\vu_h | \vr, \vu)
+\mu  \int_0^T \intTd{  |\Gradh \vuh-  \Grad \vu|^2 } \dt
+ \nu \int_0^T \intTd {|\Divh \vuh - \Div \vu|^2 } \dt
 \\
&\leq c ( h^A + \sqrt{\TS}),
\end{aligned}
\end{equation}
\begin{equation}\label{RATRM}
\begin{aligned}
&\norm{\vrh -\vr}_{L^\infty L^\gamma} + \norm{\vr_h \vu_h - \vr \vu}_{L^\infty L^\frac{2 \gamma}{\gamma+1}}
\aleq c(\sqrt{\TS} +h)^{1/2} + c(\sqrt{\TS} +h^A)^{1/\gamma} & \quad \mbox{for } \gamma \leq 2,
\\&
\norm{\vrh - \vr}_{L^\infty L^2} + \norm{\vr_h \vu_h - \vr \vu}_{L^\infty L^\frac{2 \gamma}{\gamma+1}}
\aleq c(\sqrt{\TS} +h^A)^{1/2}  &\quad \mbox{for } \gamma > 2,
\end{aligned}
\end{equation}
and
\begin{equation}\label{RATU}
 \norm{\vuh - \vu}_{L^2 L^2} \aleq c(\sqrt{\TS} +h^A)^{1/2} .
\end{equation}
The convergence rate $A$ reads
\begin{equation}\label{constA}
A=  \begin{cases}
A_{FV} :=
\min \left\{ 1, 1+\eps, 1+\beta_D, 1+\beta_M \right\} & \mbox{ for the FV method},
\\
A_{MAC} :=
\min \left\{ 1, 1+\eps, 1+\beta_D,   1+\beta_M, 1+\eps+\beta_D \right\} & \mbox{ for the MAC method}.
\end{cases}
\end{equation}
Here the constants $\beta_D$ %, $\beta_R$,
and $\beta_M$ are given in \eqref{betas}.
\end{thm}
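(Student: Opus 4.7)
The plan is to carry out a relative-energy computation at the \emph{continuous} level, using the smooth classical solution $(\vr,\vu)$ supplied by Proposition~\ref{prop1} as a test pair in the consistency identities \eqref{CS1}--\eqref{CS2}. This strategy -- following \cite[Chapter 9]{FeLMMiSh} -- avoids the \emph{ad hoc} discrete relative energy inequality employed in \cite{GallouetMAC,Gallouet_mixed,MS_MAC} and reduces the entire argument to: (i) expanding $\frakE(\vrh,\avuh\,|\,\vr,\vu)$ algebraically, (ii) replacing each ``mixed'' term by its consistent weak form, (iii) using the PDE satisfied by $(\vr,\vu)$ to cancel the high-order terms, and (iv) closing by Gronwall. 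The consistency residuals $e_\vr$ and $e_{\vm}$ from \eqref{CS3} then produce the asserted rate $h^A+\sqrt{\TS}$ directly, with no further discretisation loss.

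Concretely, I would split
$$
\frakE(\vrh,\avuh\,|\,\vr,\vu)=\intTd{\tfrac12\vrh|\avuh|^2+\Hc(\vrh)}-\intTd{\vrh\avuh\cdot\vu}+\intTd{\tfrac12\vrh|\vu|^2-\Hc'(\vr)(\vrh-\vr)-\Hc(\vr)},
$$
bound the first bracket at time $\tau$ by the energy inequality \eqref{ST} (which also delivers the viscous dissipation on the left-hand side of \eqref{RATE}), rewrite the second bracket via \eqref{CS2} with $\bfphi=\vu$, and rewrite the third bracket via \eqref{CS1} with $\phi=\tfrac12|\vu|^2-\Hc'(\vr)$ (the required regularity of $\phi$ and $\bfphi$ is furnished by \eqref{bounds_exact}). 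The continuous momentum equation and the identity $\partial_t\Hc'(\vr)+\vu\cdot\Grad \Hc'(\vr)+p'(\vr)\Div\vu=0$ (a consequence of $\eqref{PDE}_1$) are then used to convert all the remaining space-time integrals involving only the strong solution into quadratic ``relative'' remainders. The resulting schematic inequality reads
\begin{align*}
\bigl[\frakE(\vrh,\avuh\,|\,\vr,\vu)\bigr]_0^\tau
&+\mu\int_0^\tau\!\!\intTd{|\Gradh\vuh-\Grad\vu|^2}\dt+\nu\int_0^\tau\!\!\intTd{|\Divh\vuh-\Div\vu|^2}\dt\\
&\leq \int_0^\tau \mathcal R(\vrh,\avuh,\vr,\vu)\,\dt+|e_\vr|+|e_{\vm}|+e_{\mathrm{visc}},
\end{align*}
where $\mathcal R$ contains the standard kinetic remainder $\vrh(\avuh-\vu)\otimes(\avuh-\vu):\Grad\vu$, the pressure remainder $(p(\vrh)-p'(\vr)(\vrh-\vr)-p(\vr))\Div\vu$, and $e_{\mathrm{visc}}$ absorbs the projection errors in comparing $\Gradh\Piq\vu$ with $\Grad\vu$ (of order $h$ by Lemma~\ref{NC}). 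Using the pointwise bounds on $(\vr,\vu,\Grad\vu,\Div\vu)$ from \eqref{bounds_exact}, each remainder is controlled by the integrand of $\frakE$ after splitting into the essential set $\{\vrh\in[\tfrac12\inf\vr,2\sup\vr]\}$, where $\bbE(\vrh|\vr)\approx|\vrh-\vr|^2$, and its complement, where $\bbE(\vrh|\vr)\gtrsim 1+\vrh^\gamma$.

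A Gronwall argument then gives $\sup_{t\in[0,T]}\frakE(\vrh,\avuh\,|\,\vr,\vu)+\text{viscous terms}\lesssim\frakE_0+|e_\vr|+|e_{\vm}|+h$, and since the initial projection error is of order $h^2$, inserting the bounds \eqref{CS3} with the exponents $\beta_D,\beta_M$ from \eqref{betas} yields \eqref{RATE}. The density/momentum estimates \eqref{RATRM} follow from the coercivity of $\bbE(\cdot|\vr)$ on the essential and residual sets combined with H\"older applied to the splitting $\vrh\vuh-\vr\vu=\sqrt{\vrh}\,\sqrt{\vrh}(\avuh-\vu)+(\vrh-\vr)\vu$; the velocity estimate \eqref{RATU} follows by writing $\vuh-\vu=(\vuh-\Piq\vu)+(\Piq\vu-\vu)$, controlling the first term by the viscous dissipation on the left of \eqref{RATE} via Poincar\'e, and the projection error by Lemma~\ref{NC}. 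The main technical obstacle will be the estimation of the pressure remainder on the residual set, particularly for $\gamma$ close to $1$ or near the threshold $\gamma=2$: this is precisely what forces the interpolation between $L^\gamma$ and $L^\infty$ that produces the exponents $\beta_D$ and $\beta_M$, and it is the careful tracking of these exponents -- rather than any new abstract ingredient -- that delivers the improvement over \cite{GallouetMAC,MS_MAC} and extends the analysis to the full range $\gamma>1$.
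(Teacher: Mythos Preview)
Your proposal follows essentially the same route as the paper: expand the relative energy into the three brackets, feed in the energy stability \eqref{ST} and the consistency identities \eqref{CS1}--\eqref{CS2} with $\phi=\tfrac12|\vu|^2-\Hc'(\vr)$ and $\bfphi=-\vu$, substitute the strong momentum equation to collapse the mixed terms into the five quadratic remainders $R^E_1,\dots,R^E_5$, control these via the essential/residual splitting, and close by Gronwall.

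Two small corrections are worth noting. First, the exponents $\beta_D,\beta_M$ do \emph{not} arise from the pressure remainder on the residual set as you suggest; the term $(p(\vrh)-p'(\vr)(\vrh-\vr)-p(\vr))\Div\vu$ is bounded pointwise by $\bbE(\vrh|\vr)\,\|\Div\vu\|_{L^\infty}$ and is absorbed directly into $\int_0^\tau\frakE\,\dt$ with no loss. The $\gamma$-dependent exponents come entirely from the consistency errors $e_\vr,e_{\vm}$ in Lemma~\ref{thm_CS} (specifically from the upwind flux approximation of the convective terms), and you simply quote \eqref{CS3}. Second, the velocity estimate \eqref{RATU} cannot be obtained from a plain Poincar\'e inequality on the torus, since $\vuh-\vu$ has no prescribed mean; the paper instead invokes the density-weighted Sobolev--Poincar\'e inequality (Lemma~\ref{lmSP2}), which bounds $\|\vuh-\vu\|_{L^2}^2$ by the sum of the viscous dissipation $\|\Gradh\vuh-\Grad\vu\|_{L^2}^2$ \emph{and} the kinetic part $\int\vrh|\avuh-\vu|^2$ of the relative energy, plus an $O(h^2)$ projection error.
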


\begin{remark}
Let us discuss the obtained convergence rate $\mathcal{O}(h^A)$
for the choice	$\TS =h$ and different values of $\gamma >1$, $d=2,3.$
\begin{itemize}
\item For the case  $d=2$, we obtain the following convergence rate $A$:
\begin{itemize}
\item Let $\gamma \geq 2$.  Then for any $\eps \geq 0$ both numerical methods have the first order convergence rate, i.e.  $A=1.$
\item Let $\gamma \in (1,2)$. The convergence rates are different for  the FV and MAC schemes.
\begin{itemize}
\item
$A_{FV}= \min\left\{   1- \frac{5+3\eps}{6\gamma}, 1, 1+\eps \right\}$. Choosing the optimal value of $\eps$,  $\eps= - \frac{5}{3+6\gamma}\in(-\frac59, -\frac13)$,  the convergence rate $A_{FV}=1 + \eps$ varies between
$\frac49$ for $\gamma \searrow 1$ and $\frac23$ for $\gamma \nearrow 2.$
\item
$A_{MAC}= \min\left\{   1- \frac{5+3\eps}{6\gamma}, 1, 1+\eps, 1+\eps-\frac{5+3\eps}{6\gamma} \right\}$ reaches its maximum value
%%%%%%%%%%%%%%%%%%%%%%%%%%%%%%%%%%%%%%%%%%%% note for us
%%%%%%%%%%%%%%%%%%%%%%%%%%%%%%%%%%%%%%%%%%%%%%%%%%%%%%%%%%%%%%%%%%%%%%%
%\footnote{Note for ourselves:
%When $\eps \in (\frac{2}{6\gamma-3}-1, 0]$,  then $A_{MAC}=1+\eps-\frac{5+3\eps}{6\gamma}$ is an increasing function of $\eps$, implying %that $A_{MAC}$ gets its maximum value  $\frac{6\gamma-5}{6\gamma}$ at $\eps =0$.
%When $\eps \geq 0$,  then $A_{MAC}=1-\frac{5+3\eps}{6\gamma}$ is a decreasing function of $\eps$, implying that $A_{MAC}$ gets its maximum %value  $\frac{6\gamma-5}{6\gamma}$ at $\eps =0$. }
%%%%%%%%%%%%%%%%%%%%%%%%%%%%%%%%%%%%%%%%%%%%% end of note for us
%%%%%%%%%%%%%%%%%%%%%%%%%%%%%%%%%%%%%%%%%%%%%%%%%%%%%%%%%%%%%%%%%%%%%%%
 $\frac{6\gamma-5}{6\gamma}>0$ at $\eps =0$. Thus, the convergence rate varies between $\frac 1 6$ for $\gamma \searrow 1$ and $\frac{7}{12}$ for $\gamma \nearrow 2.$
\end{itemize}
\end{itemize}
\item For the case  $d=3$, we obtain the following convergence rate $A$:
\begin{itemize}
\item Let $\gamma \geq 3$. Then for any $\eps \geq 0$ both methods have first order convergence rates, i.e.~$A=1$.
\item Let $\gamma \in [2,3)$. Then for any $\eps \geq \frac{2\gamma-3}{\gamma}$ we have $A= \frac{2\gamma-3}{\gamma}$ and the convergence rate varies between $\frac 1 2$  for $\gamma = 2$ and 1 for $\gamma \nearrow 3$.
\item Let $\gamma \in (1,2)$. \begin{itemize}
\item
$A_{FV}= \min\left\{   1- \frac{2+\eps}{2\gamma}, 1, 1+\eps \right\}$. Choosing an optimal value of $\eps,$  $\eps= - \frac{ 2}{1+2\gamma}\in(-\frac23, -\frac25)$, $A_{FV} =  1+\eps$ and varies between $\frac13$ for $\gamma \searrow 1$ and  $\frac35$ for $\gamma \nearrow 2.$
\item
$A_{MAC}= \min\left\{   1- \frac{2+\eps}{2\gamma}, 1, 1+\eps, 1+\eps-\frac{2+\eps}{2\gamma} \right\} $ reaches its maximum value  $\frac{\gamma-1}{\gamma} > 0$ at $\eps =0$.  Note that $A_{MAC}$ varies between $0$ when $\gamma \searrow 1$ and $\frac 1 2$ when $\gamma \nearrow 2.$
\end{itemize}
\end{itemize}
\end{itemize}
\end{remark}
\begin{remark}
In view of the above results, the convergence rates available in the literature, see e.g. \cite{GallouetMAC,Gallouet_mixed,MS_MAC}, are not optimal. Indeed,  for $d=3$ and $\gamma=\frac32$, they
degenerate to $0.$ Moreover,  no error analysis  is available for $\gamma < \frac 3 2.$
Our approach yields error estimates also for $\gamma \in(1, \frac 3 2].$  In addition, we have better convergence rates, e.g.,  for $d=3$ and $\gamma=\frac32$, where the convergence errors are $\mathcal{O}(h^{\frac 3 4 })$ and $\mathcal{O}(h^{\frac 1 3 })$ for the FV and MAC schemes, respectively.
%$(A_{FV})_{max}=\frac34$ and $(A_{MAC})_{max}=\frac13.$
%by choosing $\eps = -\frac14$ and $\eps=0$, respectively.
\end{remark}
\bigskip

\begin{proof}[Proof of Theorem~\ref{thm_EE}]
First, by a straightforward but lengthy calculation, see Appendix~\ref{appd},
we observe the following relative energy inequality
%Analogously as in \cite{LiShe}, see also Appendix~\ref{appd}, we have the following relative energy inequality
%We start with the reformulation of the relative energy.
%\begin{align*}
%\frakE(\vrh, \vu_h| \vr, \vu) = \intTd{ \frac12 \vrh \abs{\avuh- \vu}^2 +  \Hc(\vrh) - \Hc'(\vr) (\vrh - \vr ) -\Hc(\vr) } =
%\sum_{i=1}^4 T_i ,
%\end{align*}
%where
%\begin{align*}
%&T_1 = \intTdB{ \frac12 \vrh \abs{\avuh}^2 +  \Hc(\vrh) },
%&& T_2= \intTd{\vrh \left(\frac12  \abs{\vu}^2  - \Hc'(\vr)   \right)},
%\\&
%T_3=-\intTd{\vrh \avuh \cdot \vu},
%&&T_4= \intTdB{ \vr \Hc'(\vr) - \Hc(\vr) }.
%\end{align*}
%Analogously as in \cite{LiShe}, we collect the energy estimate \eqref{ST}, and set the test function $\phi = \left( \frac12  \abs{\vu}^2 -\Hc'(\vr) \right)$ in the consistency formulation \eqref{CS1}, and $\bfphi=- \vu$ in the consistency formulation \eqref{CS2}.
%We get
\begin{equation}\label{RE1}
\begin{aligned}
& \left[  \frakE(\vrh, \vu_h| \vr , \vu )   \right]_0^\tau +  \int_0^\tau \intTdB{\mu \abs{ \Gradh \vuh}^2 +  \nu \abs{\Divh \vuh}^2 } \dt
\\& \leq \int_0^\tau \intTdB{\vrh  \pdt  \frac{\abs{\vu}^2}2 + \vrh \avuh \cdot \Grad \frac{ \abs{\vu}^2}2 }\dt  \; +   e_\vr \left(\tau, \TS, h, \abs{\vu}^2/2 \right)
\\& - \int_0^\tau \intTdB{\vrh   \pdt \Hc'(\vr) + \vrh \avuh \cdot \Grad \Hc'(\vr)}   \;  -   e_\vr( \tau, \TS, h,\Hc'(\vr) )
\\& - \int_0^\tau \intTdB{\vrh \avuh \cdot \pdt \vu + \vrh \avuh \otimes \avuh : \Grad \vu + p_h \Div \vu} \dt
\\& + \int_0^\tau \intTdB{ \mu  \Gradh \vuh: \Grad \vu + \nu \Divh \vuh \Div \vu} \dt  \;   +  {e_{\vm} (\tau, \TS, h,-\vu)}
\\& + \int_0^\tau \intTd{ \pdt \big( \vr \Hc'(\vr) - \Hc(\vr) \big) } \dt .
\end{aligned}
\end{equation}
Next, we observe the following identities
\[\begin{aligned}
&\vrh \avuh \cdot \Grad \frac{\abs{\vu}^2}{2} - \vrh \avuh \otimes \avuh : \Grad \vu
\\& = - \vrh (\avuh-\vu) \otimes (\avuh-\vu) : \Grad \vu
-\vrh (\avuh -\vu) \cdot (\vu \cdot\Grad \vu),
\end{aligned}
\]
\[
\Hc''(\vr) =\frac{1}{\vr} p'(\vr),  \quad  \vr \Hc'(\vr) - \Hc(\vr)  =  p(\vr), \quad
\pdt( \vr  \Hc'(\vr) - \Hc(\vr))  = \pdt p(\vr).
\]
Then by substituting the above equalities into \eqref{RE1} and denoting
\[  e_S=    e_\vr \left(\tau, \TS, h,\abs{\vu}^2/2\right) -
e_\vr(\tau, \TS, h, \Hc'(\vr) ) +  e_{\vm} (\tau, \TS, h,-\vu),
\]
we obtain
\begin{equation}\label{RE2}
\begin{aligned}
& \left[ \frakE(\vrh, \vu_h| \vr , \vu)  \right]_0^{\tau} +  \int_0^\tau \intTdB{\mu \abs{\Gradh \vuh- \Grad \vu}^2 +  \nu \abs{\Divh \vuh- \Div \vu}^2 } \dt
\\& \leq  e_S +
 \int_0^\tau \intTd{\vrh  (\vu - \avuh) \cdot (\pdt \vu + \vu \cdot \Grad \vu) } \dt
 \\ &
  - \intTO{ \vrh (\avuh-\vu) \otimes (\avuh-\vu) : \Grad \vu}
\\&
+ \mu \intTOB{  \abs{\Grad  \vu}^2 -  \Gradh \vuh: \Grad \vu }
\\&
+ \nu \intTOB{    \abs{\Div \vu}  ^2  - \Divh \vuh \Div \vu    }
\\ &
 + \intTOB{   \pdt p(\vr)  - \vrh  \frac{ \pdt p(\vr)}{\vr} - \vrh \avuh \cdot \frac{\Grad p(\vr)}{\vr} -  p_h \Div \vU} .
%\\& = e_S + \sum_{i=1}^5 R^E_i ,
\end{aligned}
\end{equation}
As $(\vr,\vu)$ satisfies the Navier--Stokes system \eqref{PDE}, we know that
\[ \vr (\pdt \vu + \vu \cdot \Grad \vu) = \mu \Lap \vu + \nu \Grad \Div \vu - \Grad p(\vr).\]
Substituting this equality into \eqref{RE2} we get
\begin{align*}
& \left[ \frakE(\vrh, \vu_h| \vr, \vu)  \right]_0^{\tau}  +  \intTOB{\mu \abs{\Gradh \vuh- \Grad \vu}^2 +  \nu \abs{\Divh \vuh- \Div \vu}^2 }
\\ & \leq e_S
+ \intTO{(\vrh-\vr)  (\vu - \avuh) \cdot (\pdt \vu + \vu \cdot \Grad \vu) },
 \\  &
   - \intTO{ \vrh (\avuh-\vu) \otimes (\avuh-\vu) : \Grad \vu} ,
\\  &
+  \mu \intTOB{  \abs{\Grad  \vu}^2 -  \Gradh \vuh: \Grad \vu  +(\vu - \avuh) \cdot \Lap \vu }
\\  &
+  \nu \intTOB{     \abs{\Div \vu}  ^2  - \Divh \vuh \Div \vu+(\vu - \avuh) \cdot \Grad \Div \vu}
\\   &
+ \intTOB{     \frac{\vr -\vrh}{\vr } \pdt p(\vr) - \frac{\vrh }{\vr}\avuh \cdot \Grad p(\vr) -  p_h \Div \vu}  - \intTO{ (\vu - \avuh) \cdot \Grad p(\vr)  }.
\end{align*}

Rearranging the terms on the right hand side, we arrive at
\begin{equation*}
 \left[ \frakE(\vrh, \vu_h| \vr , \vu)  \right]_0^\tau +  \intTOB{\mu \abs{\Gradh \vuh- \Grad \vu}^2 +  \nu \abs{\Divh \vuh- \Div \vu}^2 }
\leq e_S + \sum_{i=1}^5 R^E_i ,
\end{equation*}
where the integrals $R^E_i, i=1,\cdots,5$, read
\begin{align*}
 R^E_1&= \intTO{(\vrh- \vr)  (\vu - \avuh) \cdot (\pdt \vu + \vu \cdot \Grad \vu + \frac{\Grad p(\vr)}{\vr}) }
 \\ &=  \intTO{(\vrh-\vr)  (\vu - \avuh) \cdot  \frac{\Div \bS(\Grad \vu)}{\vr} }
 \\ R^E_2 &=  - \intTO{ \vrh (\avuh-\vu) \otimes (\avuh-\vu) : \Grad \vu} ,
\\ R^E_3 & =  - \mu \intTOB{  \Gradh \vuh: \Grad \vu  + \avuh \cdot \Lap \vu },
\\ R^E_4 & = -   \nu \intTOB{   \Divh \vuh \Div \vu + \avuh \cdot \Grad \Div \vu},
\\  R^E_5 &=  - \intTO{\big(p_h - p'(\vr) (\vrh - \vr) -p(\vr) \big) \Div \vu} .
\end{align*}
Next, for $i=1,\cdots, 5$ we analyze $R^E_i$ such that it can be controlled either by the relative energy or the mesh parameter $h$.

\paragraph{Term $R^E_1$.}
Applying H\"older's inequality and Lemma \ref{lmRU} we obtain
\begin{align*}
\abs{R^E_1} &\leq \frac{1}{\underline{r}} \norm{ \Div \bS(\Grad \vu)}_{L^\infty( (0,T)\times \tor )} \left(C_0 \intT{  \frakE(\vrh, \vuh| \vr, \vu)  }  + C_1 \delta \norm{\Gradh \vuh - \Grad \vu}_{L^2}^2 + C_2 \delta h^2\right)
\\& = C_0^* \intT{  \frakE(\vrh, \vuh| \vr, \vu)  }  + C_1^* \delta \norm{\Gradh \vuh - \Grad \vu}_{L^2}^2 + C_2^* \delta h^2,
\end{align*}
where
$C_0^*>0$ depends on $\norm{\vu}_{L^\infty W^{2,\infty}}$, $\norm{\vr}_{C([0,T] \times \tor)},  M , E_0, \gamma$,  $\delta$,  and $\underline{r} = \min_{[0,T] \times \tor} \vr$; \\
$ C_1^*>0$ depends on $\norm{ \vu}_{L^\infty W^{2,\infty}}$, $ M , E_0$, and $\gamma$;\\
$C_2^*>0$ depends on $\norm{ \vu}_{L^\infty W^{2,\infty}}$, $ M $, $E_0, \gamma$, and $\norm{\Grad \vu}_{L^\infty( (0,T)\times \tor )} $.

\paragraph{Term $R^E_2$.}
Thanks to H\"older's inequality we observe the following estimate.
\begin{align*}
\abs{R^E_2} \leq  C \intT{ \frakE(\vrh, \vuh| \vr, \vu)  },
\end{align*}
where $C$ depends on $\norm{\Grad \vu}_{L^\infty( (0,T)\times \tor)} $.

\paragraph{Term $R^E_3$.} We analyze the third term  $R^E_3$ in two cases.

First, we consider the case of the FV scheme. In this case $\vuh \in \vQh$,
 $\Gradh \vuh =\GradD \vuh$ and $\Piq \vuh = \vuh$. Thus,
\begin{align*}
\abs{ R^E_3} & =   \mu  \abs{ \intTOB{   \Gradh \vuh: \Grad \vu  + \avuh \cdot \Lap \vu } }
 = \mu \abs{ \intTOB{   \GradD \vuh: \Grad \vu  + \vuh \cdot \Divw \Piv \Grad \vu }}
 \\&=
  \mu \abs{ \intTO{   \GradD \vuh: ( \Grad \vu -  \Piv \Grad \vu) }}
  \leq  \mu h \norm{\Gradh \vuh}_{L^2L^2} \norm{\vu}_{L^2 W^{2,2}} ,
\end{align*}
where we have used the equality \eqref{Divcd}, the integration by parts formula \eqref{IBP2}, and the estimate \eqref{NC2}.

Second, we consider the case of $\vuh \in \vWh$ obtained by the MAC scheme.
In this case, $\Gradh \vuh =\GradB \vuh$ and the term $R^E_3$ can be estimated in the following way
\begin{align*}
\abs{ R^E_3} & =   \mu  \abs{ \intTOB{   \Gradh \vuh: \Grad \vu  + \avuh \cdot \Lap \vu } }
\\& = \mu \abs{ \intT{   \sumi  \sumj  \sum_{\epsilon = D_\sigma|D_{\sigma'} \in \Eji } \int_{D_\epsilon} \pdBji \ujh  \left(\pd_i U_j   - \frac{ (\Pivi \pd_i u_j)_{D_\sigma} +(\Pivi \pd_i u_j)_{D_{\sigma'}} }{2}  \right) \dx }
}
   \\&
  \leq \mu h \norm{\Gradh \vuh}_{L^2L^2} \norm{\vu}_{L^2 W^{2,2}} ,
\end{align*}
where we have applied \eqref{IBP5},  H\"older's inequality and the estimate \eqref{NC2}.

Consequently, we have for both cases
\[
\abs{ R^E_3} \leq Ch,
\]
where the constant $C$ depends on $\mu,$ the initial energy $E_0$ and $\norm{\vU}_{L^2 W^{2,2}}$.

\paragraph{Term $R^E_4$.} We analyze the term $R^E_4$ also in two cases.

First, for $\vuh \in \vQh$ obtained by the FV method, we have
$ \Divh \vuh  = \Divq \vuh$, $\Piq \vuh =\vuh$ and thus
\begin{align*}
\abs{R^E_4}  &=  \nu \abs{\intTOB{    \Divh \vuh \Div \vu + \avuh \cdot \Grad \Div \vu} }
\\& =  \nu \abs{\intTOB{    \Divq \vuh \Div \vu + \vuh \cdot \Grad \Div \vu  -\big( \Pid \Div \vu \; \Divq \vvh + \avs{\vvh} \cdot \Grad \Div \vu  \big) } }
\\&=  \nu \bigg|  \intTO{  \Big(  \Divq \vuh   ( \Div \vu  - \Pid \Div \vu)  + (\vuh - \avs{\vuh}) \cdot \Grad \Div \vu  \Big)}   \bigg|
\\&
\leq  h \left(  \norm{\Divh \vuh}_{L^2L^2}+ \norm{\Gradh \vuh}_{L^2L^2} \right) \norm{\vu}_{L^2 W^{2,2}},
\end{align*}
where we have used the identity \eqref{IBP9}, H\"older's inequality, the estimates \eqref{NC1} and \eqref{NC3}.

Second, for the  case of $\vuh \in \vWh$ we have
\begin{align*}
\abs{R^E_4}  &=  \nu \abs{\intTOB{    \Divh \vuh \Div \vu + \avuh \cdot \Grad \Div \vu} }
\\&=  \nu \abs{ \intTOB{    \Divw \vuh \Div \vu - \big(  \Divw \vuh \Pid \Div \vu + \vuh \cdot \Grad \Div \vu \big) + \avuh \cdot \Grad \Div \vu}   }
\\&=  \nu \abs{ \intTOB{    \Divw \vuh   ( \Div \vu  - \Pid \Div \vu) + (\avuh - \vuh) \cdot \Grad \Div \vu}   }
\\&
\leq   \nu h \left( \norm{\Divh \vuh}_{L^2L^2}+ \norm{\Gradh \vuh}_{L^2L^2} \right)  \norm{\vu}_{L^2 W^{2,2}},
\end{align*}
where  \eqref{IBP6}, H\"older's inequality, the estimates \eqref{NC1} and \eqref{NC3} were applied.

Consequently, we have for both cases
\[
\abs{ R^E_4} \leq Ch,
\]
where the constant $C$ depends on $\nu$, initial energy $E_0$, and $\norm{\vu}_{L^2 W^{2,2}}$.

\paragraph{Term $R^E_5$.}
%We first split the term $R^E_5$ into two terms.
%\begin{align*}
%& R^E_5 =  R^E_{5,1} + R^E_{5,2}  \mbox{ where }
% \\ &
%R^E_{5,1} = \intTO{(\vrh - r) (\vU -\avuh) \cdot \frac{\Grad p(r)}{r} }  ,
%\\&R^E_{5,2} =  - \intTO{\big(p_h - p'(r) (\vrh -r) -p(r) \big) \Div \vU} .
%\end{align*}
%Analogously to the estimate of $R^E_1$, to control of  $R^E_{5,1} $ we employ Lemma \ref{lmRU}, H\"older's inequality to get
%\begin{align*}
% \abs{R^E_{5,1}} & =\abs{ \intTO{(\vrh - r) (\vU -\avuh) \cdot \frac{\Grad p(r)}{r} }  }
%\\ & \leq  C_0\intT{ \frakE(\vrh, \vuh| r, \vU)  }  + C_1\delta \norm{\Gradh\vuh -\Grad \vU}_{L^2}^2 + C_2 \delta h^2.
%\end{align*}
The estimate of $R^E_5$ is straightforward by applying H\"older's inequality, i.e.,
\begin{align*}
 \abs{R^E_5}  \leq  C \intT{ \frakE(\vrh, \vuh| \vr, \vu)  } ,
\end{align*}
where $C$ depends on $\norm{\Div \vu}_{L^\infty((0,T)\times \tor)}$.

Consequently, collecting the above estimates of $R^E_i$ for $i=1,\cdots, 5$, we find
\begin{equation}\label{RE3}
\begin{aligned}
&  \frakE(\vrh, \vu_h| \vr, \vu)(\tau) +  \intTOB{(\mu -   C_1^* \delta) \abs{\Gradh \vuh- \Grad \vu}^2 +  \nu \abs{\Divh \vuh- \Div \vu}^2 }
\\& \leq  e_S   + \frakE(\vrh, \vuh| r, \vu)(0)   +  C_0^* \intT{ \frakE(\vrh, \vuh| r, \vu)  } +  C_2^* \delta h^2.
\end{aligned}
\end{equation}

Applying the standard projection error estimates we get
\begin{equation}\label{eini}
\frakE(\vrh, \vu_h| \vr, \vu)(0) \leq C h^2,
\end{equation}
where $C$ depends on $\norm{\vr_0}_{C}$ and $\norm{\vu_0}_{L^2 W^{1,2}}$.

Consequently, by choosing $\delta < \frac {\mu}{C_1^*}$, substituting \eqref{eini} into \eqref{RE3}, using Gronwall's lemma and recalling the consistency error \eqref{CS3}, we may infer that
\begin{equation*}\label{RE4}
%\begin{aligned}
  \frakE(\vrh, \vu_h| \vr, \vu)(\tau) +  \intTOB{ \abs{\Gradh \vuh- \Grad \vu}^2 +   \abs{\Divh \vuh- \Div \vu}^2 }
 \leq C e^{\frac{\tau C_0^*}{1-\TS C_0^*}}
 (\sqrt{\TS} + h^A)
%\end{aligned}
\end{equation*}
for $\TS < \frac{1}{C_0^*}$. Here, the constant $C$ depends on $\norm{\vr}_{L^{\infty} W^{2,\infty} }, \norm{ \vu}_{L^{\infty} W^{2,\infty} }   $ and the exponent $A$ is given by \eqref{constA}.

Finally, we combine the above estimate  with Lemma~\ref{lem_EN} and Lemma~\ref{lmSP2} in order to obtain \eqref{RATRM} and \eqref{RATU}, respectively. Note that $E_0$ and $M$ are bounded by  the  norm $\|(\vr_0, \vu_0)\|_{W^{k,2}(\Td; R^{d + 1})}  .$
Due to Proposition~\ref{prop1} all terms depending on the norms of the  exact solution $(\vr,\vu)$ as well as $\underline{r}$  are bounded by a constant
$c=c( T, \|(\vr_0, \vu_0)\|_{W^{k,2}(\Td; R^{d + 1})},%\underline{r}, \overline{r}, \overline{U}, %|p'|_{C^1([\underline{r},\overline{r}])},
%\norm{(r, \vU)}_{L^\infty (0,T; W^{2,\infty}(\tor);\R^{d+1})}, \norm{(\vr, \vu)}_{C^1([0,T]\times \tor)}),
\norm{(\vr, \vu)}_{C([0,T]\times \Td; R^{d + 1})})$ which
finishes the proof.
\end{proof}

Finally, we observe that  under the assumption that the numerical solutions $(\vrh,\vuh)$ are uniformly bounded, the above error estimates can be improved.
Indeed, applying Lemma~\ref{thm_CSB}, Lemma~\ref{lem_EN} and Lemma~\ref{lmSP2} we derive the first order error rate.
\begin{thm}[Error rates for bounded numerical solutions]\label{thm_EEB}
In addition to the hypotheses of Theorem~\ref{thm_EE},  let the numerical solution $(\vrh, \vu_h)$ be uniformly bounded,
%%
%%% Moreover, let the strong solution satisfy $\vU \in L^2(0,T; W^{3,2}(\tor; \R^d))$.
%%
\begin{equation}\label{bdda}
\| \vrh \|_{L^\infty((0,T) \times \Td)} \leq \Ov{\vr} \quad \mbox{ and } \quad  \| \vuh \|_{L^\infty((0,T) \times \Td; \R^d)} \leq \Ov{u}.
\end{equation}

Then   there exists a positive number
$$c=c \left(   T, \norm{( \vr_0,  \vu_0)}_{W^{k,2}(\Td; \R^{d +1})},  \inf{\vr_0}, \Ov{\vr}, \Ov{u},  \right)
%
%\norm{\vu}_{L^2(0,T; W^{3,2}(\tor;\R^d))} \right)
$$
such that
\begin{equation*}
\begin{aligned}
&\sup_{0\leq t \leq \tau}\frakE(\vr_h,\vu_h | \vr, \vu)
+\mu     \intTO{|\Gradh \vuh-  \Grad \vu|^2 }
+ \nu    \intTO{|\Divh \vuh - \Div \vu |^2 }
 \\
&\leq c ( h + \TS)
\end{aligned}
\end{equation*}
for all $\tau \in[0,T]$,
and
\begin{equation*}
\norm{ \vrh - \vr }_{L^\infty L^2} + \norm{ \vr_h \vu_h - \vr \vu }_{L^\infty L^2}  + \norm{\vuh-\vu}_{L^2L^2} \aleq c(\TS^\frac12 + h^\frac12).
\end{equation*}
\end{thm}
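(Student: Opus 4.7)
The plan is to mirror the argument used in the proof of Theorem~\ref{thm_EE} step by step, but to exploit the uniform boundedness assumption \eqref{bdda} in order to replace the generic consistency formulation of Lemma~\ref{thm_CS} by the sharper bounded version from Lemma~\ref{thm_CSB}. Since Lemma~\ref{thm_CSB} delivers the clean estimates $\abs{e_\vr},\,\abs{e_{\vm}} \leq C(\TS + h)$ with constants depending only on $\Ov{\vr}$, $\Ov{u}$, $E_0$ and the norms of the test functions, all the $\gamma$-dependent exponents $\beta_D,\beta_M$ and the numerical diffusion parameter $\eps$ that showed up in \eqref{CS3} disappear from the final rate.

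First I would re-derive the relative energy inequality, analogous to \eqref{RE1}, using Lemma~\ref{thm_CSB} with the test functions $\phi = \abs{\vu}^2/2$, $\phi = \Hc'(\vr)$ in the density identity \eqref{CSB1} and $\bfphi = -\vu$ in the momentum identity \eqref{CSB2}. Proposition~\ref{prop1} together with the assumption $k \geq 6$ guarantees that $\vu \in L^\infty W^{2,\infty} \cap L^2 W^{3,2}$, $\vr,\Hc'(\vr) \in L^\infty W^{2,\infty}$ and $\pd_t^2 \vu,\pd_t^2\vr \in L^\infty L^\infty$, which are exactly the regularity conditions required by Lemma~\ref{thm_CSB}. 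A useful structural difference here is that in \eqref{CSB2} the viscous part is already written as $\int \vuh \cdot (\mu \Lap \bfphi + \nu \Grad \Div \bfphi)$, so the commutator terms $R^E_3, R^E_4$ that had to be bounded by $\mathcal{O}(h)$ in the proof of Theorem~\ref{thm_EE} via Lemma~\ref{L23}, Lemma~\ref{L24} and \eqref{NC2}--\eqref{NC3} do not reappear; they are already absorbed in $e_{\vm}$.

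Next I would split the right-hand side exactly as in the proof of Theorem~\ref{thm_EE} into the residuals $R^E_1,\ldots,R^E_5$, use the momentum equation of the strong solution to trade $\pd_t \vu + \vu\cdot\Grad\vu$ for $\mu\Lap\vu + \nu\Grad\Div\vu - \Grad p(\vr)$, and then bound each residual. Thanks to \eqref{bdda}, the Lebesgue exponents that gave rise to $\beta_D,\beta_M$ in the unbounded case are no longer needed: $R^E_1$ is handled by Lemma~\ref{lmRU} in the same way as before but with constants depending on $\Ov{\vr},\Ov{u}$, choosing $\delta < \mu / C_1^\ast$ to absorb the $\delta\norm{\Gradh\vuh - \Grad\vu}_{L^2}^2$ part into the dissipative term on the left; $R^E_2$ and $R^E_5$ are directly controlled by the relative energy; and the viscous mismatch contributes $\mathcal{O}(h)$ via \eqref{NC2}--\eqref{NC4} as in the unbounded case. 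The initial relative energy is estimated by projection, $\frakE(\vrh^0,\vuh^0\mid\vr_0,\vu_0) \leq C h^2$.

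Combining these ingredients gives an integral inequality of the form
\[
  \frakE(\vrh,\vuh\mid\vr,\vu)(\tau)
  + \int_0^\tau \intTdB{ \mu\abs{\Gradh\vuh - \Grad\vu}^2 + \nu\abs{\Divh\vuh - \Div\vu}^2 }\dt
  \leq C(h + \TS) + C_0^\ast \int_0^\tau \frakE(\vrh,\vuh\mid\vr,\vu)\dt,
\]
and Gr\"onwall's lemma yields the announced $\mathcal{O}(h + \TS)$ rate. The $L^\infty L^2$ estimates for $\vrh - \vr$ and $\vrh\vuh - \vr\vu$ and the $L^2 L^2$ estimate for $\vuh - \vu$ then follow from Lemma~\ref{lem_EN} and Lemma~\ref{lmSP2}; note that the $L^2$ (rather than $L^\gamma$) bound on $\vrh-\vr$ is now available for the full range $\gamma > 1$ precisely because $\vrh$ is uniformly bounded, so $\bbE(\vrh\mid\vr) \gtrsim \abs{\vrh-\vr}^2$. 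I expect the main technical obstacle to be purely bookkeeping: matching the $\mu\Lap\bfphi$, $\nu\Grad\Div\bfphi$ terms arising from \eqref{CSB2} with those produced by the strong momentum equation so that the commutator/projection errors are visibly $\mathcal{O}(h)$ and can be absorbed into $e_\vm$; once this alignment is made, the remainder of the argument is a direct specialization of the proof of Theorem~\ref{thm_EE}.
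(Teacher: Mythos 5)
Your proposal is correct and follows essentially the same route as the paper, which proves this theorem only by the remark that one reruns the argument of Theorem~\ref{thm_EE} with the bounded-solution consistency formulation of Lemma~\ref{thm_CSB} (giving consistency errors $C(\TS+h)$ in place of the $\sqrt{\TS}+h^{A}$ bounds) and then concludes via Lemma~\ref{lem_EN} (the $L^2$ case \eqref{ER2}, available because $\vrh$ is bounded) and Lemma~\ref{lmSP2}. Your added observations — the $O(h)$ bookkeeping needed to reconcile the $\mu\Lap\bfphi$, $\nu\Grad\Div\bfphi$ form of \eqref{CSB2} with the quadratic dissipation terms, and the Gr\"onwall step — are exactly the details the paper leaves implicit.
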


\section{Conclusion}

In this paper we have presented improved error estimates for two well-known numerical methods applied to compressible Navier--Stokes equations. Specifically, we consider the upwind finite volume method and the Marker-and-Cell (MAC) method with implicit time discretization and piecewise constant approximation in space. However, the approach presented in the paper can be applied also to other well-known numerical methods for compressible Navier--Stokes equations.

The novelty of our approach lies in the use of continuous form of the relative energy inequality combined with a refined consistency analysis. Thus, following the framework of the Lax equivalence theorem it suffices to show the (energy) stability, cf.~Lemma~\ref{thm_stability}, and  the consistency of a numerical scheme, cf.~Lemma~\ref{thm_CS}, in order to obtain the convergence rates for the scheme. Indeed, the consistency errors directly yield global errors in the relative energy. To obtain the corresponding error estimates we only assume that the initial data are sufficiently regular and a strong solution exists globaly in time.
The error estimates  presented in Theorem~\ref{thm_EE} improves the results already presented in the literature \cite{Gallouet_mixed, GallouetMAC, MS_MAC}, see Remarks~2,3 for a detailed discussion. In particular, our error estimates hold for the full range of the adiabatic coefficient $\gamma > 1.$

Moreover, we have considered a natural hypothesis on uniformly bounded numerical solutions and proved that the error estimates can be further improved, cf.~Theorem~\ref{thm_EEB}. Indeed, we prove that both numerical methods converge with the first order in time and mesh parameter in terms of the relative energy and with the half order in the $L^\infty(0,T; L^2(\Td))$-norm for the density and momentum, as well as in the  $L^2((0,T) \times \Td)$-norm for the velocity.

\appendix
\section*{Appendix}
\section{Proof of the preliminary lemmas}\label{appa}
In this section we present the proofs of Lemmas \ref{L22} -- \ref{NC}.
\begin{proof}[Proof of Lemma \ref{L22}]
First, we calculate
\begin{align*}
& \intTd{   r_h \Div \vU}
 =
\sumK r_K \intK{     \Div \vU}
=
\sumK r_K \sumfaceK    \int_\sigma \vU \cdot \vn \dS
\\&=
\sumK r_K \sumfaceK    |\sigma| \Piv \vU \cdot \vn
=
\sumK r_K   |K|  \Divw \Piv \vU
=\intTd{r_h \Divw \Piv \vU}.
\end{align*}
Analogously, we find
\begin{align*}
& \intTd{   \vvh \cdot \Grad \psi }
 =
\sumK \vv_K \cdot \intK{     \Grad \psi }
=
\sumK \vv_K \cdot \Big( \sumfaceK     \int_\sigma \psi  \vn \dS \Big)
\\&=
\sumK \vv_K \cdot  \Big( \sumi \sumfaceiK   |\sigma| \Pivi \psi   \vn \Big)
=
\sumi \sumK \vih|_K  \; \left( |K|  \pdmeshi \Pivi \psi \right)
\\&
=\sumi \intTd{\vih \; \pdmeshi \Pivi \psi}
=\intTd{   \vvh \cdot \Gradpiv \psi },
\end{align*}
which completes the proof.
\end{proof}
\medskip
\begin{proof}[Proof of Lemma \ref{L23}]
First, we calculate
\begin{align*}
& \intTd{   \vuh \cdot \Grad \psi}
=  \sumi   \sum_{\sigma  \in \facei} \int_{D_\sigma} \uih \pd_i  \psi \dx
=  \sumi   \sum_{\sigma  \in \facei}  \uih  \left(  \int_{\epsilon^+}  \psi \dS - \int_{\epsilon^-} \psi \dS \right)
\\&
=  \sumi   \sum_{\sigma  \in \facei}  \uih  \left(  \int_{D_{\epsilon^+}}  \Pid \psi  \dx  - \int_{D_{\epsilon^-}} \Pid \psi  \dx \right)/h,
\end{align*}
where $\epsilon^-$ and $\epsilon^+$ are the left and right edges of $D_\sigma$ in the $\ith$-direction of the canonical system for $\sigma \in \facei$. Note that $D_{\epsilon^\pm} \subset \mesh$ are elements of the primary grid $\mesh$. %. and thus $\Pid \psi |_{D_{\epsilon^\pm}  } \in \Qh$.
Then we can rewrite the above relation as
\begin{align*}
& \intTd{   \vuh \cdot \Grad \psi}
=   \sumi   \sum_{\sigma  \in \facei}    |D_\sigma|  \uih  \pdedgei  \Pid \psi \dx
=  -  \sumi   \sumK  |K|  \pdmeshi \uih    \Pid \psi \dx
%\\&
= -  \intTd{ \Pid \psi  \; \Divw \vuh},
\end{align*}
where we have used \eqref{IBP2}. This proves \eqref{IBP3}.
%Then applying the integration by parts formula \eqref{IBP2} proves \eqref{IBP3}, i.e.,
%\begin{align*}
% \intTd{   \vuh \cdot \Grad \psi}
%=   \sumi   \sum_{\sigma  \in \facei}    |D_\sigma|  \uih  \pdedgei  \Pid \psi \dx
%=  -  \sumi   \sumK  |K|  \pdmeshi \uih    \Pid \psi \dx
%= -  \intTd{ \Pid \psi  \; \Divw \vuh}.
%\end{align*}

The proof of \eqref{IBP4} follows from  \eqref{Divcd2} and \eqref{IBP2}, specifically,
\begin{align*}
 \intTd{   \vvh \cdot \Grad \psi}
=  \intTd{   \vvh \cdot \Gradpiv \psi}
=  \sumi \intTd{ \vih   \pdmeshi \Pivi \psi }
=  -  \sumi \sumK \intK{ \pdedgei \vih   \; \Pivi \psi }.
\end{align*}
%\begin{align*}
%& \intTd{   \vvh \cdot \Grad \psi}
%=  \sumK \vvh \cdot \intK{\Grad \psi}
%=  \sumK \vvh \cdot \left( \sumi \sumfaceiK \intSh{ \psi \vn }\right)
%\\&
%=  \sumi \sumK |K| \vih   \pdmeshi \Pivi \psi
%=  -  \sumi \sumK \intK{ \pdedgei \vih   \; \Pivi \psi }.
%\end{align*}
\end{proof}

\begin{proof}[Proof of Lemma \ref{L24}]
First, we recall \eqref{Divcd} and \eqref{IBP2} to derive the first equality
\begin{align*}
& \intTd{  \avuh \cdot \Lap \vU }
 =   \intTd{   \avuh \cdot ( \Div  \Grad \vU )}
 =  \intTd{   \avuh \cdot ( \Divw \Piv \Grad \vU )}
\\&=  -
\intTd{    \GradD \avuh : \Piv \Grad \vU  }
=  - \sumi \sumj  \sum_{\sigma \in \facei} \int_{D_\sigma}  \pdedgei \Ov{\ujh} \Pivi \pd_i U_j  \dx
  \\&=  -
  \sum_{i=1}^d  \sumj  \sum_{\sigma \in \facei} \int_{D_\sigma} \left( \frac12 \sum_{\epsilon \in \Eji(D_\sigma) }(\pdBji \ujh)_{D_\epsilon} \right)   \Pivi \pd_i U_j  \dx
 \\&= -
  \sumi   \sumj  \sum_{\epsilon = D_\sigma|D_{\sigma'} \in \Eji} \int_{D_\epsilon} \pdBji \ujh  \left(  \frac{ (\Pivi \pd_i U_j)_{D_\sigma} +(\Pivi \pd_i U_j)_{D_{\sigma'}} }{2}  \right) \dx
\end{align*}

Next, it is easy to check \eqref{IBP6} by setting $\psi =\Div \vU$ in \eqref{IBP3}, i.e.,
\begin{align*}
\intTd{   \vuh \cdot \Grad \Div \vU }
= -  \intTd{\Divw \vuh \Pid( \Div \vU) }  .
\end{align*}
Further, thanks to \eqref{Divcd} and \eqref{IBP2}, we observe \eqref{IBP7}, i.e.,
\begin{align*}
 \intTd{  \vvh \cdot \Lap \vU }
 =   \intTd{   \vvh \cdot ( \Div  \Grad \vU )}
 =   \intTd{   \vvh \cdot ( \Divw \Piv \Grad \vU )}
=
- \intTd{    \GradD \vvh : \Piv \Grad \vU  }.
\end{align*}
%Moreover, setting $\psi=\Div \vU$ in \eqref{IBP4} we get \eqref{IBP8}, i.e.,
%\begin{align*}
% \intTO{   \vvh \cdot \Grad \Div \vU}
%= -  \sumi \intTd{ \pdedgei \vih \Pivi( \Div \vU) }.
%\end{align*}
Finally, by setting $(\avs{\vvh},\Div \vU)$ as $(\vuh, \psi)$ into \eqref{IBP3} we get \eqref{IBP9}, i.e.
\begin{equation*}
 \intTd{   \avs{\vvh} \cdot \Grad \Div \vU}
= -  \intTd{ \Pid \Div \vU \; \Divw(\avs{\vvh}) }
=-  \intTd{ \Pid \Div \vU \; \Divq \vvh } ,
\end{equation*}
where we have used the identity \eqref{diveq}.
\end{proof}

\begin{proof}[Proof of Lemma \ref{NC}]
Note that the estimates stated in \eqref{NC2} -- \eqref{NC4} hold due to the standard interpolation error;
whence  we omit the proof.
%Here we shall only prove the first inequality of \eqref{NC1} as the second is analogous.
Now we prove \eqref{NC1}. First, by a direct calculation, we have
\begin{align*}
& \norm{\Piq \vuh -\vuh}^2_{L^2} = \sumK \sumi \sumfaceiK |D_{\sigma,K}| \left(  \frac{u_{i,\sigma_{K,i+}} +u_{i, \sigma_{K,i-} } }{2}  - u_{i,\sigma} \right)^2
\\& =  \frac14 \sumK \sumi   \left(  \frac{u_{i,\sigma_{K,i+}}  - u_{i,\sigma_{K,i-}} }{2} \right)^2 \sumfaceiK |D_{\sigma,K}|
 =  \frac{h^2}4 \sumK  |K|  \sumi \left( \pdmeshi  \uih \right)^2
  \leq \frac{h^2}4 \norm{\GradB \vuh}^2_{L^2},
\end{align*}
where we have used the fact that $\pdBii = \pdmeshi$ in the last inequality, which proves the first estimate of \eqref{NC1}.
Analogously, we compute
\begin{align*}
& \norm{\avs{ \vvh} -\vvh}^2_{L^2} = \sumK \sumi \sumfaceiK |D_{\sigma,K}| \left( \frac{\vih^{\rm in} + \vih^{\rm out} }{2}  - \vih^{\rm in}  \right)^2
\\&= \frac{h^2}4  \sumK \sumi \sumfaceiK |D_{\sigma,K}| (\pdedgei \vih)^2 =  \frac{h^2}4  \sumi \sumfaceinti |D_\sigma| (\pdedgei \vih)^2  \leq \frac{h^2}4 \norm{\GradD \vvh}^2_{L^2},
\end{align*}
which proves the second estimate of \eqref{NC1}. This concludes the proof of Lemma~\ref{NC}.
\end{proof}

\section{Sobolev-Poincar\'e type inequality}
First, we recall \cite[Theorem 17]{FeLMMiSh} for a generalized Sobolev-Poincar\'e inequality.
\begin{lemma}[\cite{FeLMMiSh}]\label{lmSP}
For a structure mesh let $\gamma>1$ and $\vrh \geq 0$ satisfy
\[
0<c_M \leq \intTd{\vrh} \mbox{ and }  \intTd{\vrh^\gamma} \leq c_E,
\]
where $\gamma>1$, $c_M$ and $c_E$ are positive constants.
Then there exists $c=c(c_M, c_E, \gamma)$ independent of $h$ such that
\[
\norm{f_h}_{L^q(\tor)}^2  \leq c\left( \norm{ \Gradh f_h}_{L^2(\tor)}^2  + \intTd{\vrh |f_h|^2 } \right).
\]
\end{lemma}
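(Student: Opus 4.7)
The plan is to decompose $f_h$ into its spatial mean and a zero-mean remainder, and to estimate each piece separately using a standard discrete Sobolev--Poincar\'e inequality together with the density bounds on $\vrh$. Write $f_h = (f_h - \langle f_h\rangle) + \langle f_h\rangle$ with $\langle f_h\rangle = |\Td|^{-1}\intTd{f_h}$. For the oscillatory part, I would invoke a discrete Sobolev--Poincar\'e inequality available for the piecewise-constant spaces $\Qh$ (or $\vWh$) and the corresponding discrete gradient $\Gradh$, obtained either by a reconstruction/mollification back to $W^{1,2}(\Td)$ or by a direct combinatorial proof on the MAC grid; this yields
\[
\|f_h - \langle f_h\rangle\|_{L^q(\Td)} \leq c\,\|\Gradh f_h\|_{L^2(\Td)}
\]
with $q$ the Sobolev conjugate exponent ($q=2d/(d-2)$ for $d\geq 3$, any finite $q$ for $d=2$).

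Second, to bound the constant $|\langle f_h\rangle|$ in terms of $\|\sqrt{\vrh} f_h\|_{L^2}$ and $\|\Gradh f_h\|_{L^2}$, I would exploit the identity
\[
\langle f_h\rangle \intTd{\vrh} = \intTd{\vrh f_h} - \intTd{\vrh (f_h - \langle f_h\rangle)}.
\]
The mass lower bound $\intTd{\vrh}\geq c_M$ makes the left-hand side comparable to $|\langle f_h\rangle|$. On the right, Cauchy--Schwarz gives $\intTd{\vrh f_h} \leq (\intTd{\vrh})^{1/2}(\intTd{\vrh f_h^2})^{1/2}$, and the total mass is in turn controlled by H\"older and the hypothesis $\intTd{\vrh^\gamma}\leq c_E$. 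The remaining term is estimated by H\"older with the pair $(\gamma,\gamma')$, by $\|\vrh\|_{L^\gamma}\leq c_E^{1/\gamma}$, and by the Sobolev--Poincar\'e step above applied in $L^{\gamma'}$ (which is legitimate since $\gamma'\leq q$ for every $\gamma>1$ in dimensions $d\leq 3$, possibly after interpolating between $L^2$ and $L^q$). Combining the two bounds, squaring and using $(a+b)^2\leq 2(a^2+b^2)$ yields
\[
\|f_h\|_{L^q}^2 \leq c\bigl(\|\Gradh f_h\|_{L^2}^2 + \intTd{\vrh |f_h|^2}\bigr)
\]
with $c=c(c_M,c_E,\gamma)$ independent of $h$.

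The main obstacle I anticipate is the rigorous verification of the discrete Sobolev--Poincar\'e inequality uniformly in $h$, in the precise discrete norms used throughout the paper. For the FV scheme (with $\vuh\in \vQh$ and $\Gradh = \GradD$) this is standard and can be lifted from results in \cite{EyGaHe} by a reconstruction argument. For the MAC scheme, however, $\Gradh=\GradB$ acts on the staggered spaces $\vWh$, and one has to carefully relate the bidual-grid gradient of $f_h$ to a piecewise-affine reconstruction whose continuous $W^{1,2}$ seminorm is comparable to $\|\GradB f_h\|_{L^2}$; the jumps across primal faces are what typically make this delicate. Once this discrete embedding is available, the remainder of the argument is purely functional-analytic and the density hypotheses only enter through the elementary H\"older bounds sketched above.
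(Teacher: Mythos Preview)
The paper does not supply a proof of this lemma; it is quoted verbatim from \cite[Theorem~17]{FeLMMiSh} as a known result and used as a black box, so there is no argument in the paper itself to compare against. Your proposed strategy---splitting $f_h$ into its mean and a zero-mean oscillation, controlling the oscillation by a discrete Sobolev--Poincar\'e inequality, and recovering the mean from the identity $\langle f_h\rangle\intTd{\vrh}=\intTd{\vrh f_h}-\intTd{\vrh(f_h-\langle f_h\rangle)}$ together with the mass lower bound---is precisely the standard route to such density-weighted Poincar\'e inequalities and is essentially how the result is established in the cited monograph.

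One small caveat: your H\"older step with the pair $(\gamma,\gamma')$ on $\intTd{\vrh(f_h-\langle f_h\rangle)}$ needs $\gamma'\le 2^*=2d/(d-2)$, i.e.\ $\gamma\ge 2d/(d+2)$; when $d=3$ and $\gamma\in(1,6/5)$ the embedding $W^{1,2}\hookrightarrow L^{\gamma'}$ fails and interpolation between $L^2$ and $L^{2^*}$ does not reach $L^{\gamma'}$, so the argument as written leaves a gap in that narrow range. This is a technicality rather than a structural flaw---one simply uses a different H\"older splitting or exploits the $L^1$ bound on $\vrh$ more carefully---but it is worth being aware of since the paper explicitly works with all $\gamma>1$.
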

Now we are ready to show the following lemma.
\begin{lemma}\label{lmSP2}
Under the assumption of Lemma \ref{lmSP} let $(\vrh, \vuh)$ be a solution obtained either by the FV method \eqref{VFV_S} or the MAC method \eqref{MAC_S}. Let $\vU \in  W^{2,\infty}(\tor;\R^d)$,
then there exists $C_1 =C_1( M , E_0, \gamma)>0$ and $C_2=C_2( M , E_0, \gamma, \norm{\Grad \vU}_{L^\infty}, \norm{\vU}_{W^{2,\infty}} )>0$ such that
\begin{align}
\label{unormA}
& \norm{ \vuh - \vU}_{L^2}^2   \leq   C_1 \left( \norm{ \Gradh  \vuh - \Grad \vU)}_{L^2(\tor)}^2  + \intTd{\vrh  |\vuh -\vU|^2 }    \right)
 +  C_2 h^2, \\
 \label{unormB}
& \norm{ \Piq \vuh - \vU}_{L^2}^2   \leq   C_1 \left( \norm{ \Gradh \vuh - \Grad \vUh}_{L^2(\tor)}^2  + \intTd{\vrh  |\vuh -\vU|^2 }    \right)
 +  C_2 h^2 ,
\end{align}
where $ M $ and $E_0$ are the fluid mass and initial energy.
\end{lemma}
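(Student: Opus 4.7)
The plan is to reduce the inequality \eqref{unormA} for the continuous test field $\vU$ to the purely discrete Sobolev--Poincar\'e inequality of Lemma \ref{lmSP}, by splitting off an interpolation error. First I would write
\[
\vuh - \vU = (\vuh - \Piv \vU) + (\Piv \vU - \vU),
\]
so that
\[
\norm{\vuh - \vU}_{L^2}^2 \leq 2\,\norm{\vuh - \Piv \vU}_{L^2}^2 + 2\,\norm{\Piv \vU - \vU}_{L^2}^2,
\]
and the second term is of order $h^2 \norm{\vU}_{W^{1,\infty}}^2$ by a standard projection estimate. The first term is a discrete function in the appropriate space for both the FV scheme ($\Piv \vU$ being taken componentwise) and the MAC scheme (where $\vuh, \Piv \vU \in \vWh$), so Lemma \ref{lmSP} applies to each component of $f_h := \vuh - \Piv \vU$ and gives
\[
\norm{\vuh - \Piv \vU}_{L^2}^2 \leq c\,\Bigl(\norm{\Gradh(\vuh - \Piv \vU)}_{L^2}^2 + \intTd{\vrh |\vuh - \Piv \vU|^2}\Bigr).
\]

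Next I would bound the two terms on the right. Using the triangle inequality,
\[
\norm{\Gradh(\vuh - \Piv \vU)}_{L^2}^2 \leq 2\,\norm{\Gradh \vuh - \Grad \vU}_{L^2}^2 + 2\,\norm{\Grad \vU - \Gradh \Piv \vU}_{L^2}^2,
\]
and the second term is of order $h^2 \norm{\vU}_{W^{2,2}}^2$ by the interpolation estimate \eqref{NC2} (applied componentwise to match either $\GradD$ in the FV case or $\GradB$ in the MAC case). Similarly,
\[
\intTd{\vrh |\vuh - \Piv \vU|^2} \leq 2\intTd{\vrh |\vuh - \vU|^2} + 2\,\norm{\vrh}_{L^1}\norm{\vU - \Piv \vU}_{L^\infty}^2,
\]
and the last term is controlled by $M h^2 \norm{\vU}_{W^{1,\infty}}^2$ using \eqref{MC}. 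Combining these estimates yields \eqref{unormA} with $C_1$ depending only on $M, E_0, \gamma$ and $C_2$ absorbing the factors $\norm{\Grad \vU}_{L^\infty}^2$ and $\norm{\vU}_{W^{2,\infty}}^2$.

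For \eqref{unormB}, in the FV case $\vuh \in \vQh$ so $\Piq \vuh = \vuh$ and there is nothing to do. In the MAC case $\vuh \in \vWh$, and I would write
\[
\norm{\Piq \vuh - \vU}_{L^2} \leq \norm{\Piq \vuh - \vuh}_{L^2} + \norm{\vuh - \vU}_{L^2}
\leq \tfrac{h}{2}\norm{\GradB \vuh}_{L^2} + \norm{\vuh - \vU}_{L^2},
\]
by the first estimate in \eqref{NC1}. The remaining discrete gradient is controlled by
\[
\norm{\GradB \vuh}_{L^2} \leq \norm{\GradB \vuh - \Grad \vU}_{L^2} + \norm{\Grad \vU}_{L^2},
\]
so that $\tfrac{h^2}{4}\norm{\GradB \vuh}_{L^2}^2$ contributes a term $\lesssim h^2 \norm{\Gradh \vuh - \Grad \vU}_{L^2}^2$ (absorbable into the first term on the right of \eqref{unormB} since $h \leq 1$) plus a term of order $h^2 \norm{\vU}_{W^{1,\infty}}^2$ absorbable into $C_2 h^2$. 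Applying the already proved \eqref{unormA} to $\norm{\vuh - \vU}_{L^2}^2$ then gives \eqref{unormB}.

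The main (mild) obstacle is bookkeeping: Lemma \ref{lmSP} is stated scalar-valued, so one has to apply it componentwise with care about which discrete space ($\vQh$ vs $\vWh$) and which discrete gradient ($\GradD$ vs $\GradB$) is used, and one must keep track of the various interpolation operators $\Piq$, $\Piv$, $\Pid$ so that the discrete integration-by-parts and the norm estimates \eqref{NC1}--\eqref{NC3} apply in the correct guise. Once this is organized, the argument is a short triangle-inequality manipulation built on top of Lemma \ref{lmSP} and Lemma \ref{NC}.
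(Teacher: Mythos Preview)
Your approach is essentially the same as the paper's: project $\vU$ into the discrete velocity space, apply Lemma~\ref{lmSP} to the discrete difference, and control the interpolation remainders by standard projection estimates. One small correction: in the FV case $\vuh \in \vQh$, so you must take $\vUh = \Piq\vU \in \vQh$ rather than $\Piv\vU$ (which lands in $\vWh$); otherwise $\vuh - \Piv\vU$ does not belong to a single discrete space on which Lemma~\ref{lmSP} is stated. The paper makes this distinction explicitly, using $\vUh = \Piq\vU$ for FV and $\vUh = \Piv\vU$ for MAC. Your handling of \eqref{unormB} in the MAC case---bounding $\|\GradB\vuh\|_{L^2}$ by the triangle inequality and absorbing $h^2\|\Gradh\vuh - \Grad\vU\|_{L^2}^2$ into the $C_1$ term using $h\leq 1$---is a mild variant of the paper's, which instead absorbs $h^2\|\Divh\vuh\|_{L^2}^2$ directly via the energy estimate \eqref{est_u}.
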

\begin{proof}
Firstly, by setting $f_h =\vuh -\vUh$ for some $\vUh$ belonging to the same discrete space as $\vuh$ in Lemma \ref{lmSP} we know that
\[
\norm{\vuh -\vUh}_{L^2(\tor)}^2  \leq C_1 \left( \norm{ \Gradh (\vuh -\vUh)}_{L^2(\tor)}^2  + \intTd{\vrh |\vuh -\vUh|^2 } \right),
\]
where the constant $C_1$ depends on $c_M \equiv  M $, $c_E \equiv E_0$ and $\gamma$. Note that the choices of $c_M$ and $c_E$ are owing to the mass conservation \eqref{MC} and energy stability \eqref{ST}.
%In what follows we discuss the two cases of the discrete space of $\vuh$.

Next, for $\vuh \in \Qh$ and $\vuh \in \vWh$ we  set $\vUh = \Piq \vU \in \Qh$ and $\vUh = \Piv \vU \in \vWh$, respectively.  Then by the triangular inequality and projection error we derive
\begin{align*}
& \norm{ \vuh - \vU}_{L^2} ^2
\leq
 \norm{ \vuh - \vUh}_{L^2} ^2 + \norm{ \vUh - \vU}_{L^2}^2
 \\&
 \leq   C_1 \left( \norm{ \Gradh (\vuh -\vUh)}_{L^2(\tor)}^2  + \intTd{\vrh |\vuh -\vUh|^2 } \right)   + \left(h   \norm{\Grad \vU }_{L^2}\right)^2
 \\&
 \leq   C_1 \left( \norm{ \Gradh \vuh - \Grad \vU}_{L^2(\tor)}^2  +  \intTd{\vrh  |\vuh -\vU|^2}  \right)
  \\& \quad  +C_1 \left(  \norm{ \Grad \vU - \Gradh \vUh}_{L^2(\tor)}^2  + \intTd{\vrh |\vUh -\vU|^2 }  \right)   + h^2   \norm{\Grad \vU }_{L^2}^2
 \\&
 \leq   C_1 \left( \norm{ \Gradh \vuh - \Grad \vU }_{L^2(\tor)}^2  + \intTd{\vrh  |\vuh -\vU|^2 }  \right)
 \\& \quad +  C_1 \left( h^2 \norm{ \vU}_{W^{2,\infty}}^2 +  h^2 \norm{\Grad \vU}_{L^\infty}^2  \intTd{\vrh}  \right)   + h^2   \norm{\Grad \vU }_{L^2}^2
  \\&
 =  C_1 \left( \norm{ \Gradh \vuh - \Grad \vU }_{L^2(\tor)}^2  + \intTd{\vrh  |\vuh -\vU|^2 }    \right)
 +  C_2 h^2 ,
\end{align*}
where $C_2$ depends on $C_1, \norm{ \vU}_{W^{2,\infty}},\norm{\Grad \vU }_{L^\infty}, M$, and $ \norm{\Grad \vU }_{L^2}$, which proves \eqref{unormA}.
%Secondly, for the case $\vuh \in \vWh$ we may set $\vUh = \Piv \vU$ and find
%\begin{align*}
%& \norm{ \vuh - \vU}_{L^2}^2  \leq \norm{ \vuh - \vUh}_{L^2} ^2 + \norm{ \vUh - \vU}_{L^2}^2
% \\&
% \leq
% C_1 \left( \norm{ \Gradh (\vuh -\vUh)}_{L^2(\tor)}^2  + \intTd{\vrh |\vuh -\vUh|^2 } \right)   + h^2   \norm{\Grad \vU }_{L^2}^2
% \\&
% \leq    C_1 \left( \norm{ \Gradh \vuh - \Grad \vU}_{L^2(\tor)}^2  + \norm{ \Grad \vU - \Gradh \vUh }_{L^2(\tor)}^2  \right)
%  \\& \quad +  C_1 \left( \intTd{\vrh \left( |\vuh -\vU|^2+  |\vUh -\vU|^2 \right) }  \right)   + h^2   \norm{\Grad \vU }_{L^2}^2
% \\&
% \leq   C_1  \left( \norm{ \Gradh (\vuh -\vUh)}_{L^2(\tor)}^2  + \intTd{\vrh  |\vuh -\vU|^2 }   +  h^2 \norm{\Grad \vU}_{L^\infty}^2  \intTd{\vrh}  \right)   + h^2   \norm{\Grad \vU }_{L^2}^2
%  \\&
% \leq   C_1 \left( \norm{ \Gradh (\vuh -\vUh)}_{L^2(\tor)}^2  + \intTd{\vrh  |\vuh -\vU|^2 }    \right)
% +  C_2 h^2 .
%\end{align*}
%Combing the above two estimates we prove \eqref{unormA}.

Finally, we proceed with the proof of \eqref{unormB}.
On the one hand, for  the case of $\vuh \in \Qh$ we have $\Piq \vuh =\vuh$, meaning \eqref{unormB} automatically holds as it is the same as \eqref{unormA}.
On the other hand, for the case of $\vuh \in \vWh$ we employ \eqref{unormA} and the triangular inequality to derive
\begin{align*}
& \norm{\Piq \vuh - \vU}_{L^2}^2  \leq \norm{\Piq \vuh  -\vuh}_{L^2} ^2 +  \norm{ \vuh - \vU}_{L^2} ^2
 \\&
  \leq  h^2  \norm{ \Divh \vuh }_{L^2}^2+  C_1 \left( \norm{ \Gradh  \vuh - \Grad \vU }_{L^2(\tor)}^2  + \intTd{\vrh  |\vuh -\vU|^2 }    \right)
 +  C_2 h^2
 \\&
 \aleq   C_1 \left( \norm{ \Gradh  \vuh - \Grad \vU }_{L^2(\tor)}^2  + \intTd{\vrh  |\vuh -\vU|^2 }    \right)
 +  C_2 h^2,
\end{align*}
where we have used the fact that $\norm{ \Divh \vuh }_{L^2}^2 \aleq E_0$ in view of \eqref{est_u},
which completes the proof.
\end{proof}

Next, we recall \cite[Lemma 14.3]{FeLMMiSh} in order to show the following statement formulated in Lemma~\ref{lmRU}.
\begin{lemma}[\cite{FeLMMiSh}]\label{RL2}
Let $\gamma>1$,  $\underline{r} = \frac12 \min\limits_{(t,x) \in Q_T} r >0 $ and $\overline{r} = 2 \max\limits_{(t,x) \in Q_T} r$.  Then there exists $ C= C(\underline{r}, \overline{ r}) >0$ such that
\begin{equation*}
(\vr -r)^2  1_{\rm ess}(\vr)  + (1+ \vr^\gamma) 1_{\rm res}(\vr) \leq C
\bbE(\vr|r),  %\quad \bbE(\vr|r) =   P(\vr) - P'(r) (\vr -r) - P(r) .
\end{equation*}
where $\bbE(\vr|r) =   P(\vr) - P'(r) (\vr -r) - P(r) $ and
\begin{equation}\label{essres}
(1_{\rm ess}(\vr), 1_{\rm res}(\vr)) =
\begin{cases}
(1,0) & \mbox{ if } \vr \in [\underline{r}, \Ov{r}], \\
(0,1) & \mbox{ if } \vr \in \R^+\backslash[\underline{r}, \Ov{r}].
\end{cases}
\end{equation}
\end{lemma}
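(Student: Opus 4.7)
The plan is to prove the bound pointwise in $\vr$, splitting along the essential/residual dichotomy and exploiting strict convexity of $P(\vr) = a\vr^\gamma/(\gamma-1)$. A preliminary observation sets the stage: the definitions of $\underline{r}$ and $\overline{r}$ force $r(t,x) \in [2\underline{r}, \overline{r}/2]$ uniformly in $(t,x) \in Q_T$, so the reference value $r$ always lies in a compact subinterval of the open essential set $(\underline{r}, \overline{r})$, bounded away from $0$, $\underline{r}$, and $\overline{r}$. All the constants below will depend on this compact range.

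On the essential set $\vr \in [\underline{r}, \overline{r}]$, I would apply Taylor's theorem with mean-value remainder: since $\bbE(\vr|r) = P(\vr) - P'(r)(\vr - r) - P(r)$ is the second-order Taylor remainder of $P$ at $r$, there exists $\xi$ between $\vr$ and $r$ with
\[
\bbE(\vr|r) = \tfrac{1}{2} P''(\xi) (\vr - r)^2.
\]
Since $\vr, r \in [\underline{r}, \overline{r}]$, the point $\xi$ lies in the same interval, and $P''(\xi) = a \gamma \xi^{\gamma - 2}$ admits a strictly positive minimum $m = m(\underline{r}, \overline{r}, \gamma) > 0$ on $[\underline{r}, \overline{r}]$ (attained at $\overline{r}$ if $1 < \gamma < 2$, at $\underline{r}$ if $\gamma \geq 2$). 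This immediately gives $(\vr - r)^2 \leq (2/m)\, \bbE(\vr|r)$ on the essential part.

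On the residual set I would consider the ratio $\Phi(\vr, r) := \bbE(\vr|r)/(1 + \vr^\gamma)$ on $R \times K$, with $R := [0, \underline{r}] \cup [\overline{r}, \infty)$ and $K := [2\underline{r}, \overline{r}/2]$, and show $\inf \Phi > 0$. The function is continuous, and since $K \subset (\underline{r}, \overline{r})$ one has $\vr \neq r$ everywhere on $R \times K$, so $\Phi > 0$ pointwise. Compactness handles the piece $[0, \underline{r}] \times K$ directly, the corner $\vr = 0$ being controlled by $\bbE(0|r) = r P'(r) - P(r) = a r^\gamma \geq a(2\underline{r})^\gamma$. The only nontrivial step is the tail $\vr \to \infty$: expanding
\[
\bbE(\vr|r) = \tfrac{a}{\gamma - 1} \vr^\gamma - \tfrac{a\gamma}{\gamma - 1}\, r^{\gamma - 1}\, \vr + a r^\gamma,
\]
one obtains $\Phi(\vr, r) \to a/(\gamma-1)$ as $\vr \to \infty$, uniformly for $r \in K$. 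Hence $\Phi$ admits a positive uniform lower bound $c = c(\underline{r}, \overline{r}) > 0$, which gives $1 + \vr^\gamma \leq c^{-1}\, \bbE(\vr|r)$ on the residual part. Combining both contributions with $C := \max\{2/m, 1/c\}$ yields the asserted inequality. The only genuinely delicate point is the uniform large-$\vr$ tail control, but it reduces to the leading-order expansion above and so is not a real obstacle; everything else is a textbook Taylor-plus-compactness argument.
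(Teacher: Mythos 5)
Your proof is correct. The paper itself gives no proof of this lemma---it is quoted verbatim from \cite[Lemma 14.3]{FeLMMiSh}---but your argument (Taylor's theorem with the mean-value remainder and a uniform lower bound on $P''$ over $[\underline{r},\Ov{r}]$ for the essential part; positivity plus compactness plus the explicit large-$\vr$ expansion showing $\bbE(\vr|r)/(1+\vr^\gamma)\to a/(\gamma-1)$ uniformly in $r$ for the residual part) is precisely the standard route by which such relative-energy coercivity estimates are established in the cited monograph, and all the steps, including the observation that $r$ stays in the compact set $[2\underline{r},\Ov{r}/2]$ bounded away from the residual region, check out.
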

Now we are ready to show the following lemma.
\begin{lemma}\label{lmRU}
Let $(\vrh, \vuh)$ be a solution obtained either by the FV method \eqref{VFV_S} or the MAC method \eqref{MAC_S}, and let
$\vU \in  L^\infty( 0,T; W^{2,\infty}( \tor; \R^d ))$. Then there holds
\begin{equation*}\label{RU}
\intTO{| (\vrh -r) (\avuh -\vU) | } %\aleq C(\underline{r}, \overline{ r}) \intT{  \frakE(\vrh, \vuh| r, \vU)   }
\leq C_0 \intT{ \frakE(\vrh, \vuh| r, \vU)    }  + C_1 \delta \norm{\Gradh \vuh -\Grad \vU}_{L^2}^2 +C_2\delta h^2,
\end{equation*}
where $C_1$, $C_2$ are the same as in Lemma~\ref{lmSP2}, and $C_0$ depends on $\underline{r}, \overline{ r}, \delta$, $ M , E_0, \gamma$.
\end{lemma}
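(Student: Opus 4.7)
The strategy is a classical essential/residual decomposition of the density variable combined with Young's inequality to separate the $(\vrh-r)$ factor from the $(\avuh-\vU)$ factor, after which Lemma~\ref{RL2} controls the density pieces by $\bbE(\vrh|r)$ and Lemma~\ref{lmSP2}~\eqref{unormB} converts the remaining $L^2$-norm of $\avuh-\vU$ into $\norm{\Gradh\vuh-\Grad\vU}_{L^2}^2$ plus the relative kinetic energy plus a $\mathcal{O}(h^2)$ discretization residual. Thus the three terms on the right-hand side of the target inequality arise naturally from Lemma~\ref{lmSP2}, and the gradient term is the one that must come with a small factor $\delta$ (which can later be absorbed against $\mu|\Gradh\vuh-\Grad\vU|^2$ in \eqref{RE3}).

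Concretely, I would split
\[
|(\vrh - r)(\avuh - \vU)| = |(\vrh - r)\,1_{\rm ess}(\vrh)\,(\avuh - \vU)| + |(\vrh - r)\,1_{\rm res}(\vrh)\,(\avuh - \vU)|
\]
and analyze three regimes. On the essential set, Young's inequality with a small parameter $\delta_1>0$ yields
\[
|(\vrh - r)\,1_{\rm ess}\,(\avuh - \vU)| \leq \tfrac{\delta_1}{2}|\avuh - \vU|^2 + \tfrac{1}{2\delta_1}(\vrh - r)^2 1_{\rm ess},
\]
so that Lemma~\ref{RL2} bounds the second piece by $C(\underline{r},\Ov{r})\,\bbE(\vrh|r)/\delta_1$, contributing to $C_0\intT{\frakE}$, while the first piece is handled by Lemma~\ref{lmSP2} producing the $C_1\delta$ and $C_2\delta h^2$ terms together with an additional kinetic contribution $\intTd{\vrh|\vuh-\vU|^2}$ that is itself controlled by $\frakE$.

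For the residual contribution I split further into $\{\vrh\leq\underline{r}\}$ and $\{\vrh\geq\Ov{r}\}$. On the low-density piece, $|\vrh - r|\leq \Ov{r}$ and Young gives
\[
|(\vrh - r)(\avuh - \vU)|\,1_{\{\vrh\leq\underline{r}\}} \leq \tfrac{\delta_1}{2}|\avuh - \vU|^2 + \tfrac{\Ov{r}^{\,2}}{2\delta_1}\,1_{\{\vrh\leq\underline{r}\}},
\]
and Lemma~\ref{RL2} bounds $1_{\{\vrh\leq\underline{r}\}}\leq C\bbE(\vrh|r)$, again feeding into $C_0\intT{\frakE}$ via Lemma~\ref{lmSP2} as before. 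On the high-density piece I use $|\vrh-r|\leq 2\vrh$ and the simple Young estimate $2\vrh|\avuh-\vU|\leq \vrh + \vrh|\avuh-\vU|^2$; the first term satisfies $\vrh\,1_{\{\vrh\geq \Ov{r}\}}\leq \Ov{r}^{\,1-\gamma}\vrh^\gamma\,1_{\{\vrh\geq\Ov{r}\}}\leq C(\Ov{r},\gamma)\,\bbE(\vrh|r)$ by Lemma~\ref{RL2}, while the second term equals twice the kinetic part of $\frakE(\vrh,\vuh|r,\vU)$. Summing the three regimes, integrating in time, and choosing $\delta_1\simeq\delta$ yields the advertised bound with $C_0\propto 1/\delta$ depending on $\underline{r},\Ov{r},M,E_0,\gamma$, exactly as stated.

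The main technical obstacle is the regime $\gamma\in(1,2)$: there $(\vrh-r)^2$ is \emph{not} pointwise controlled by $\bbE(\vrh|r)$ on the residual set because $\bbE$ behaves like $\vrh^\gamma$ rather than $\vrh^2$ for large $\vrh$. This forces the extra splitting of the residual set and, on the high-density side, the trick of using Young's inequality to redistribute the factor $\vrh$ so that only the first power of $\vrh$ (which \emph{is} dominated by $\vrh^\gamma$ on $\{\vrh\geq\Ov{r}\}$) appears, and to absorb the cross term $\vrh|\avuh-\vU|^2$ directly into the kinetic part of the relative energy rather than trying to bound it through the Sobolev--Poincar\'e inequality.
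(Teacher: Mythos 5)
Your proposal is correct and follows essentially the same route as the paper: the same three-regime splitting (essential set, $\{\vrh<\underline{r}\}$, $\{\vrh>\Ov{r}\}$), Young's inequality in each regime, Lemma~\ref{RL2} for the density factors, and Lemma~\ref{lmSP2} to convert $\norm{\avuh-\vU}_{L^2}^2$ into the gradient, kinetic, and $h^2$ terms. The only cosmetic difference is that on the essential set the paper bounds $|\avuh-\vU|^2\le \vrh|\avuh-\vU|^2/\underline{r}$ and absorbs it directly into $C_0\intT{\frakE}$ rather than passing through Lemma~\ref{lmSP2}, which changes nothing of substance.
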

\begin{proof}
First, thanks to Lemma \ref{RL2} we observe
\begin{align*}
&\intTO{ 1_{\rm res}(\vrh)  \; \vrh }
= \intTO{  1_{ \vrh < \underline{r} }  \;  \vrh } +\intTO{ 1_{ \vrh > \overline{ r}} \;  \vrh }
\\&  \leq
 \underline{r}  \intTO{  1_{ \vrh < \underline{r} }  \; 1 } +\intTO{ 1_{ \vrh > \overline{ r}} \;  \vrh^\gamma }
 \leq C \intT{ \frakE(\vrh, \vuh| r, \vU)    } ,
 \end{align*}
 where $C=C(\underline{r}, \overline{ r})$ is given in Lemma \ref{RL2}.

Next, using the triangular inequality,  Young's inequality, the above estimate, Lemma~\ref{lmSP2} and Lemma~\ref{RL2} we find
\begin{align*}
&\intTO{| (\vrh -r) (\avuh -\vU) | }
\\& \leq
\intTO{1_{\rm ess}(\vrh)  | (\vrh -r) (\avuh -\vU) | }   + \intTO{ 1_{\vrh <\underline{r}}  \Ov{r} |\avuh -\vU | }
\\& \quad +  \intTO{ 1_{\vrh > \overline{ r}}  \vrh |\avuh -\vU | }
\\& \leq
\intTO{1_{\rm ess}(\vrh)  \frac12 \big( (\vrh -r)^2 + \vrh |\avuh -\vU |^2/\underline{r} \big) }
\\& \quad
 + \intTO{  1_{\vrh <\underline{r}}   \frac12 \big( \frac{1}{\delta} \Ov{r}^2  + \delta |\avuh -\vU|^2   \big) }
\\& \quad
 + \intTO{ 1_{\vrh > \overline{ r}} \frac12 \big(  \vrh  + \vrh |\avuh -\vU|^2   \big) }
\\&\aleq
C_0\intT{ \frakE(\vrh, \vuh| r, \vU)    }  + C_1 \delta \norm{\Gradh \vuh -\Grad \vU}_{L^2}^2 +C_2\delta h^2,
\end{align*}
where $C_0$ depends on $\underline{r}$, $C(\underline{r}, \overline{ r})$, $\delta$, and $C_1$.
We have completed the proof.
\end{proof}
%%%========================================================
\section{Relative energy norm}
In this section we show how to control the errors in the conservative variables by the relative energy.
\begin{lemma}\label{lem_EN}
Let $\gamma>1$ and  $(r, \vU)$ satisfy
 \[  \underline{r} = \frac12 \min\limits_{(t,x) \in Q_T} r , \quad
 \Ov{r} = 2 \max\limits_{(t,x) \in Q_T} r, \quad
  \Ov{U} = \max\limits_{(t,x) \in Q_T} |\vU|
 \]
 for some positive constants $\Ov{u},  \underline{r}, \Ov{r}.$

\begin{itemize}
\item
If  $\vr >0$ and $\intTd{\vr ^\gamma} \leq E_0$ hold,  then
\begin{subequations}\label{ER1}
\begin{equation}\label{ER1D}
\norm{\vr - r}_{L^\gamma} + \norm{\vm - \vM}_{L^{\frac{2\gamma}{\gamma+1}}}
\aleq \left(\frakE(\vr,\vu|r,\vU) \right)^{\frac12 } +  \left(\frakE(\vr,\vu|r,\vU) \right)^{\frac1\gamma}
\mbox{ for } \gamma \leq 2 ;
\end{equation}
\begin{equation}\label{ER1M}
\norm{\vr - r}_{L^2} + \norm{\vm - \vM}_{L^{\frac{2\gamma}{\gamma+1}}}
\aleq \left(\frakE(\vr,\vu|r,\vU) \right)^{\frac12 }
 \mbox{ for } \gamma \geq 2 ,
\end{equation}
\end{subequations}
where $m= \vr \vu$ and $\vM= r \vU$.

\item
In addition, let $ \vr <\Ov{\vr}$. Then
\begin{equation}\label{ER2}
\norm{\vr - r}_{L^2} + \norm{\vm - \vM}_{L^2} \aleq \left(\frakE(\vr,\vu|r,\vU) \right)^{\frac12 } .
\end{equation}

\end{itemize}
\end{lemma}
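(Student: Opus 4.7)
The strategy is to decompose every integral into essential and residual parts via the cutoffs $1_{\rm ess}(\vr)$, $1_{\rm res}(\vr)$ of \eqref{essres} and exploit Lemma~\ref{RL2}, which is tailor--made for this bookkeeping: on the essential set $\bbE(\vr|r) \gtrsim (\vr-r)^2$, while on the residual set $\bbE(\vr|r) \gtrsim 1 + \vr^\gamma$. Integrating, both $\int (\vr-r)^2 1_{\rm ess}$ and $\int (1+\vr^\gamma)1_{\rm res}$ are dominated by $\frakE(\vr,\vu|r,\vU)$. The torus has finite measure, which will let me interchange $L^p$ norms whenever the range of exponents is favourable.

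For the density error I would argue as follows. On the essential set one directly gets $\|(\vr-r)1_{\rm ess}\|_{L^2}^2 \lesssim \frakE$, hence also $\|(\vr-r)1_{\rm ess}\|_{L^\gamma} \lesssim \frakE^{1/2}$ for $\gamma\le 2$ by the embedding $L^2(\Td) \hookrightarrow L^\gamma(\Td)$. On the residual set the bound $(\vr-r)^\gamma \lesssim 1 + \vr^\gamma + r^\gamma \lesssim \bbE$ (using $\vr^\gamma \gtrsim \Ov{r}^\gamma \gtrsim r^\gamma$ on $\{\vr>\Ov{r}\}$ and $1 \gtrsim (\vr-r)^\gamma$ on $\{\vr<\underline{r}\}$) yields $\|(\vr-r)1_{\rm res}\|_{L^\gamma} \lesssim \frakE^{1/\gamma}$, giving \eqref{ER1D}. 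For $\gamma\ge 2$ the bound $(\vr-r)^2 \lesssim 1 + \vr^\gamma \lesssim \bbE$ holds pointwise on the residual set as well (on $\{\vr>\Ov{r}\}$ use $\vr^2 \le \vr^\gamma/\Ov{r}^{\gamma-2}$), so both essential and residual parts yield the $L^2$ bound $\frakE^{1/2}$ needed in \eqref{ER1M}.

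For the momentum I would split
\[
\vm - \vM = \vr(\vu-\vU) + (\vr-r)\vU.
\]
With $p=\tfrac{2\gamma}{\gamma+1}$, Hölder's inequality with exponents $2/p$ and $2/(2-p)$, applied to $\vr^p|\vu-\vU|^p = (\sqrt{\vr}|\vu-\vU|)^p\,\vr^{p/2}$, plus the identity $\tfrac{p}{2-p}=\gamma$, gives
\[
\|\vr(\vu-\vU)\|_{L^p}^p \le \Bigl(\intTd \vr|\vu-\vU|^2\Bigr)^{p/2}\Bigl(\intTd \vr^\gamma\Bigr)^{(2-p)/2} \lesssim (\frakE)^{p/2} E_0^{(2-p)/2},
\]
hence $\|\vr(\vu-\vU)\|_{L^p}\lesssim \frakE^{1/2}$. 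The second piece is controlled by $\Ov{U}\,\|\vr-r\|_{L^p}$, and since $p\le\gamma$ (resp.\ $p\le 2$) for $\gamma\in(1,2]$ (resp.\ $\gamma\ge 2$), the torus embedding reduces it to the density bounds just proved, completing \eqref{ER1D}--\eqref{ER1M}. Finally \eqref{ER2} is the easy corollary: under the extra pointwise bound $\vr\le\Ov{\vr}$, one has $(\vr-r)^2 \lesssim \bbE$ globally (so $\|\vr-r\|_{L^2}\lesssim\frakE^{1/2}$) and $\|\vr(\vu-\vU)\|_{L^2}^2 \le \Ov{\vr}\intTd\vr|\vu-\vU|^2 \lesssim \frakE$, while $\|(\vr-r)\vU\|_{L^2}\le \Ov{U}\|\vr-r\|_{L^2}$.

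The main subtlety is the $\gamma\in(1,2)$ regime, where the residual set contributes the weaker rate $\frakE^{1/\gamma}$ rather than $\frakE^{1/2}$; this is intrinsic because the relative entropy only grows like $\vr^\gamma$ for large $\vr$, and it is responsible for the two-term bound in \eqref{ER1D}. The rest amounts to careful Hölder bookkeeping, with the exponent $p=2\gamma/(\gamma+1)$ chosen precisely so that the natural kinetic energy and pressure norms fit together.
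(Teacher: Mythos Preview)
Your proposal is correct and follows essentially the same route as the paper: the essential/residual decomposition via Lemma~\ref{RL2}, the $L^2\hookrightarrow L^\gamma$ embedding on the essential set, the $\vr^\gamma$-control on the residual set, and the momentum splitting $\vm-\vM=\vr(\vu-\vU)+(\vr-r)\vU$ with the $\sqrt{\vr}$-H\"older factorization are all exactly as in the paper's proof. The only cosmetic differences are that the paper bounds the residual density term via $\|(\vr-r)1_{\rm res}\|_{L^\gamma}\le\|\vr\,1_{\rm res}\|_{L^\gamma}+\|r\,1_{\rm res}\|_{L^\gamma}$ rather than your pointwise bound $(\vr-r)^\gamma\lesssim 1+\vr^\gamma$, and handles the second momentum piece by H\"older with $\|\vU\|_{L^{2\gamma/(\gamma-1)}}$ instead of your $L^\infty$-plus-embedding; both variants are equivalent here.
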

\begin{proof}
First, by the triangular inequality and Lemma~\ref{RL2} we obtain for $\gamma \leq 2$ that
\begin{equation*}%\label{rlg}
\begin{aligned}
&
\norm{\vr - r}_{L^\gamma}
\leq \norm{(\vr  - r) 1_{\rm ess}(\vr) }_{L^\gamma} + \norm{( \vr  - r)1_{\rm res}(\vr) }_{L^\gamma}
\aleq \norm{(\vr  - r) 1_{\rm ess}(\vr) }_{L^2} + \norm{( \vr  - r)1_{\rm res}(\vr) }_{L^\gamma}
\\&
\aleq  \big( \bbE(\vr|r)\big)^{1/2} +  \left(  \norm{\vr }_{L^\gamma}  + \norm{ r}_{L^\gamma} \right)1_{\rm res}(\vr)
\aleq    \big(\bbE(\vr|r)\big)^{1/2} +  \left(  \intTd{\vr^\gamma \; 1_{\rm res}(\vr) }\right)^{1/\gamma}  + \left( \intTd{1_{\rm res}(\vr) }\right)^{1/\gamma}
\\&
\aleq  \big(\bbE(\vr|r) \big)^{1/2}+  \big(\bbE (\vr|r)\big)^{1/\gamma}
\leq  \big(\frakE(\vr,\vu|r,\vU)\big)^{1/2}  +  \big(\frakE(\vr,\vu|r,\vU)\big)^{1/\gamma} ,
\end{aligned}
\end{equation*}
where $1_{\rm ess}(\vr)$  and $1_{\rm res}(\vr)$  are given in Lemma~\ref{RL2}.
Further, utilizing the above estimate with  the triangular inequality, H\"older's inequality, and the $L^\gamma$ bound on $\vr$, we find
\begin{align*}
&
\norm{\vm - \vM}_{L^{\frac{2\gamma}{\gamma+1}}   }
\leq
\norm{\vr (\vu - \vU)}_{L^{\frac{2\gamma}{\gamma+1}}   }
+\norm{(\vr -r) \vU}_{L^{\frac{2\gamma}{\gamma+1}}   }
%%\\&
\aleq
\norm{ \sqrt{\vr} }_{L^{2\gamma}  }  \norm{ \sqrt{\vr} (\vu - \vU)}_{L^2}
+\norm{\vr -r}_{L^\gamma  }\norm{\vU}_{L^{\frac{2\gamma}{\gamma-1}}   }
\\&
\aleq
\norm{\vr }_{L^\gamma }^{1/2}  \norm{ \vr |\vu - \vU|^2}_{L^1}^{1/2}
+\norm{\vr -r}_{L^\gamma}\norm{\vU}_{L^\infty  }
%%\\&
 \aleq  \big(\frakE(\vr,\vu|r,\vU)\big)^{1/2}  +  \big(\frakE(\vr,\vu|r,\vU)\big)^{1/\gamma}
\end{align*}
which proves \eqref{ER1D}.

Next, again by the triangular inequality and Lemma~\ref{RL2} we observe for $\gamma \geq 2$ that
\begin{equation*}%\label{rlg2}
\begin{aligned}
&
\norm{\vr - r}_{L^2}
\leq \norm{(\vr - r)1_{\rm ess}(\vr) }_{L^2} + \norm{(\vr  - r)1_{\rm res}(\vr) }_{L^2}
\\&
\aleq  \big( \bbE(\vr|r)\big)^{1/2} +  \left( \intTd{ \vr^2 \;1_{\vr > \Ov{r}}   } \right)^{1/2}+  \left( \intTd{ 1_{\rm res}(\vr) } \right)^{1/2}
\\&
\aleq    \big(\bbE(\vr|r)\big)^{1/2} +  \left(  \intTd{\vr^\gamma \; 1_{\vr>\Ov{r} } }\right)^{1/2}
\aleq  \big(\bbE(\vr|r) \big)^{1/2},
\end{aligned}
\end{equation*}
where we have used the fact that $\vr^2 \leq  \vr^\gamma$ for large $\vr$ with $\gamma \geq 2$. Further, it is easy to check that
\begin{align*}
&
\norm{\vm - \vM}_{L^{\frac{2\gamma}{\gamma+1}}   }
\leq
\norm{\vr (\vu - \vU)}_{L^{\frac{2\gamma}{\gamma+1}}   }
+\norm{(\vr -r) \vU}_{L^{\frac{2\gamma}{\gamma+1}}   }
\\&
\aleq
\norm{ \sqrt{\vr} }_{L^{2\gamma}  }  \norm{ \sqrt{\vr} (\vu - \vU)}_{L^2}
+\norm{\vr -r}_{L^2  }\norm{\vU}_{ L^{2\gamma} }
\\&
\aleq
\norm{\vr }_{L^\gamma }^{1/2}  \norm{ \vr |\vu - \vU|^2}_{L^1}^{1/2}
+\norm{\vr -r}_{L^2}\norm{\vU}_{L^\infty  }
 \aleq  \big(\frakE(\vr,\vu|r,\vU)\big)^{1/2}
\end{align*}
which proves \eqref{ER1M}.

When assuming an upper bound on $\vr$, we derive via Lemma~\ref{RL2} that
\begin{equation*}%\label{rlg3}
\norm{\vr - r}_{L^2}
\leq \norm{(\vr - r)1_{\rm ess}(\vr) }_{L^2} + \norm{(\vr  - r)1_{\rm res}(\vr) }_{L^2}
\aleq  \big( \bbE(\vr|r)\big)^{1/2} +   \norm{1_{\rm res}(\vr) }_{L^2}
\aleq    \big(\bbE(\vr|r)\big)^{1/2}
\end{equation*}
which implies
\begin{align*}
&
\norm{\vm - \vM}_{L^2}
\leq
\norm{\vr (\vu - \vU)}_{L^2  }
+\norm{(\vr -r) \vU}_{L^2 }
\norm{ \sqrt{\vr} }_{L^\infty }  \norm{ \sqrt{\vr} (\vu - \vU)}_{L^2}
+\norm{\vr -r}_{L^2  }\norm{\vU}_{ L^\infty}
\\&
 \aleq  \big(\frakE(\vr,\vu|r,\vU)\big)^{1/2} .
\end{align*}
Combining the above two estimates we get \eqref{ER2} and complete the proof.
\end{proof}

%%%========================================================
\section{Derivation of the relative energy}\label{appd}
In this section we show the relative energy inequality \eqref{RE1}. We start with the reformulation of the relative energy.
%\begin{align*}
%\frakE(\vrh, \vu_h| \vr, \vu) = \intTd{ \frac12 \vrh \abs{\avuh- \vu}^2 +  \Hc(\vrh) - \Hc'(\vr) (\vrh - \vr ) -\Hc(\vr) } =
%\sum_{i=1}^4 T_i ,
%\end{align*}
%where
%Recall that
\begin{align*}
\frakE(\vrh, \vu_h| r, \vU) = \intTd{\left( \frac12 \vrh \abs{\avuh- \vU}^2 +  \Hc(\vrh) - \Hc'(r) (\vrh -r ) -\Hc(r) \right)} =
\sum_{i=1}^4 T_i ,
\end{align*}
where
\begin{align*}
&T_1 = \intTdB{ \frac12 \vrh \abs{\avuh}^2 +  \Hc(\vrh) },
&& T_2= \intTd{\vrh \left(\frac12  \abs{\vU}^2  - \Hc'(r)   \right)},
\\&
T_3=-\intTd{\vrh \avuh \cdot \vU},
&&T_4= \intTdB{ r \Hc'(r) - \Hc(r) }.
\end{align*}

Next, %Analogously as in \cite{LiShe},
we collect the energy estimate \eqref{ST}, and set the test function $\phi = \left( \frac12  \abs{\vU}^2 -\Hc'(r) \right)$ in the consistency formulation \eqref{CS1}, as well as $\bfphi=- \vU$ in the consistency formulation \eqref{CS2} to get respectively the following
\begin{align*}
\left[ T_1 \right]_{t=0}^\tau =\left[ \intTdB{ \frac12 \vrh \abs{\avuh}^2 +  \Hc(\vrh) } \right]_{t=0}^\tau
 \leq
- \mu \intTO{|\Gradh \vuh|^2} -  \nu \intTO{|\Divh \vuh|^2},
\end{align*}
\begin{align*}
& \left[ T_2 \right]_{t=0}^\tau = \left[ \intTd{ \vrh \underbrace{\left(\frac12  \abs{\vU}^2  - \Hc'(r)   \right) }_{\text{ test function in } \eqref{CS1} } }\right]_{t=0}^\tau
\\&  = \intTOB{\vrh  \pdt  \frac{\abs{\vU}^2}2 + \vrh \avuh \cdot \Grad \frac{ \abs{\vU}^2}2 } +  { e_\vr \left( \tau, \TS, h,{\abs{\vU}^2/2}\right)}
\\& - \intTOB{\vrh   \pdt \Hc'(r) + \vrh \avuh \cdot \Grad \Hc'(r)} -   { e_\vr( \tau, \TS, h,{ \Hc'(r) })},
\end{align*}
\begin{align*}
&\left[ T_3 \right]_{t=0}^\tau = \left[ \intTd{\vrh \avuh \cdot \underbrace{( - \vU)}_{\text{ test function in } \eqref{CS2} }    }  \right]_{t=0}^\tau
\\& = -  \intTOB{\vrh \avuh \cdot \pdt \vU + \vrh \avuh \otimes \avuh : \Grad \vU + p_h \Div \vU}
\\& + \intTOB{ \mu  \Gradh \vuh: \Grad \vU + \nu \Divh \vuh \Div \vU}  +  {e_{\vm} ( \tau, \TS, h,-\vU)}.
\end{align*}
Moreover, the term $T_4$ reads
\begin{align*}
\left[ T_4 \right]_{t=0}^\tau = \left[ \intTdB{ r \Hc'(r) - \Hc(r) } \right]_{t=0}^\tau = \intTO{ \pdt( r \Hc'(r) - \Hc(r)) }.
\end{align*}
Summing up the above terms we get \eqref{RE1}
\begin{equation*}
\begin{aligned}
& \left[ \frakE(\vrh, \vu_h| r, \vU)  \right]_0^{T} +  \intTOB{\mu \abs{ \Gradh \vuh}^2 +  \nu \abs{\Divh \vuh}^2 }
\\& \leq \intTOB{\vrh  \pdt  \frac{\abs{\vU}^2}2 + \vrh \avuh \cdot \Grad \frac{ \abs{\vU}^2}2 } +  { e_\vr \left( \tau, \TS, h,{\abs{\vU}^2/2}\right)}
\\& - \intTOB{\vrh   \pdt \Hc'(r) + \vrh \avuh \cdot \Grad \Hc'(r)} -   { e_\vr( \tau, \TS, h,{ \Hc'(r) })}
\\& - \intTOB{\vrh \avuh \cdot \pdt \vU + \vrh \avuh \otimes \avuh : \Grad \vU + p_h \Div \vU}
\\& + \intTOB{ \mu  \Gradh \vuh: \Grad \vU + \nu \Divh \vuh \Div \vU}  +  {e_{\vm} ( \tau, \TS, h,-\vU)}
\\&
+ \intTO{ \pdt( r \Hc'(r) - \Hc(r)) }.
\end{aligned}
\end{equation*}
\medskip

%\bibliography{citace}
\bibliographystyle{plain}

\end{document}